\theoremstyle{plain}
\newtheorem{theorem}{Theorem}[section]
\newtheorem{lemma}[theorem]{Lemma}
\newtheorem{corollary}[theorem]{Corollary}
\theoremstyle{definition}
\newtheorem{definition}[theorem]{Definition}
\newtheorem{example}[theorem]{Example}
\DeclareMathOperator{\diag}{diag}
\DeclareMathOperator{\tr}{tr}
\DeclareMathOperator{\M}{M}
\DeclareMathOperator{\W}{W}
\DeclareMathOperator{\D}{D}
\DeclareMathOperator{\eu}{\scriptscriptstyle E}
\DeclareMathOperator{\tv}{\scriptscriptstyle TV}
\DeclareMathOperator{\kl}{\scriptscriptstyle KL}
\DeclareMathOperator{\js}{\scriptscriptstyle JS}
\DeclareMathOperator{\hl}{\scriptscriptstyle H}
\let\O\undefined
\DeclareMathOperator{\O}{O}
\newcommand{\tp}{{\scriptscriptstyle\mathsf{T}}}
\newcommand{\p}{{\scriptscriptstyle+}}
\newcommand{\m}{{\scriptscriptstyle-}}
\newcommand{\PW}{\W^\m}
\newcommand{\EW}{\W^\p}
\newcommand{\Pm}{\Phi^\m}
\newcommand{\Em}{\Phi^\p}
\newcommand{\PD}{\D^\m}
\newcommand{\ED}{\D^\p}
\newcommand{\Pd}{d^\m}
\newcommand{\Ed}{d^\p}
\newcommand{\dd}{d}
\newcommand{\Mp}{\M^p}
\newcommand{\leqnomode}{\tagsleft@true}
\newcommand{\reqnomode}{\tagsleft@false}
\begin{document}
\title{Distances between probability distributions of different dimensions}
\author{Yuhang~Cai and Lek-Heng~Lim
\thanks{Y.~Cai is with the Department of Statistics, University of Chicago, Chicago, IL 60637 USA e-mail: yuhangc@uchicago.edu.}
\thanks{L.-H.~Lim is with the Computational and Applied Mathematics Initiative, University of Chicago, Chicago, IL 60637 USA e-mail: lekheng@uchicago.edu.}
}

\markboth{IEEE Transactions on Information Theory}{Cai, Lim: Distances between probability distributions of different dimensions}
\maketitle

\begin{abstract}
Comparing probability distributions is an indispensable and ubiquitous task in machine learning and statistics. The most common way to compare a pair of Borel probability measures is to compute a metric between them, and by far the most widely used notions of metric are the Wasserstein metric and the total variation metric. The next most common way is to compute a divergence between them, and in this case almost every known divergences such as those of Kullback--Leibler,  Jensen--Shannon, R\'enyi, and many more, are special cases of the $f$-divergence. Nevertheless these metrics and divergences may only be computed, in fact, are only defined, when the pair of probability measures are on spaces of the same dimension. How would one quantify, say, a KL-divergence between the uniform distribution on the interval $[-1,1]$ and a Gaussian distribution on $\mathbb{R}^3$? We show that these common notions of metrics and divergences give rise to natural distances between Borel probability measures defined on spaces of different dimensions, e.g., one on $\mathbb{R}^m$ and another on $\mathbb{R}^n$ where $m, n$ are distinct, so as to give a meaningful answer to the previous question.
\end{abstract}

\begin{IEEEkeywords}
Probability densities, probability measures, Wasserstein distance, total variation distance, KL-divergence, R\'enyi divergence
\end{IEEEkeywords}

\section{Introduction}
\label{sec:intro}

\IEEEPARstart{M}{easuring} a distance, whether in the sense of a metric or a divergence, between two probability distributions is a fundamental endeavor in machine learning and statistics. We encounter it in clustering \cite{irpino2008dynamic}, density estimation \cite{sheather2004density}, generative adversarial networks \cite{arjovsky2017wasserstein}, image recognition \cite{bao2017cvae}, minimax lower bounds \cite{guntuboyina2011lower},  and just about any field that undertakes a statistical approach towards data. It is well-known that the space of Borel probability measures on a measurable space $\Omega \subseteq \mathbb{R}^n$ may be equipped with many different metrics and divergences, each good for its own purpose, but two of the most common families are the $p$-Wasserstein metric
\begin{align*}
\W_p(\mu ,\nu )&\coloneqq \biggl[ \inf_{\gamma \in \Gamma(\mu ,\nu )} \int_{\Omega \times \Omega} \|x-y\|_2^p \, d\gamma(x,y) \biggr]^{1/p}
\intertext{and the $f$-divergence}
\D_{f}(\mu \Vert \nu ) &\coloneqq \int_\Omega f\Bigl(\frac{d\mu }{d\nu }\Bigr)d\nu .
\end{align*}
For $p=1$ and $2$, the $p$-Wasserstein metric gives the Kantorovich metric (also called earth mover's metric) and L\'evy-Fr\'echet metric respectively. Likewise, for various choices of $f$, we obtain as special cases the Kullback--Liebler,
Jensen--Shannon, R\'enyi, Jeffreys, Chernoff, Pearson chi-squared, Hellinger squared, exponential, and alpha--beta divergences, as well as the total variation metric (see Table~\ref{tab:f}). Nevertheless, a $p$-Wasserstein metric cannot be expressed as an $f$-divergence.

All these distances are only defined when $\mu$ and $\nu$ are probability measures on a common measurable space $\Omega \subseteq \mathbb{R}^n$. This article provides an answer to the question:
\begin{quote}\label{question}
How can one define a distance between $\mu$, a probability measure on $\Omega_1 \subseteq \mathbb{R}^m$, and $\nu$, a probability measure on $\Omega_2 \subseteq \mathbb{R}^n$, where $m \ne n$?
\end{quote}
We will show that this problem has a natural solution that works for any of the aforementioned metrics and divergences in a way that is consistent with recent extensions of distances to inequidimensional covariance matrices \cite{LSY} and subspaces \cite{YL}. Although we will draw from the same high-level ideas in \cite{LSY,YL}, we require substantially different techniques in order to work with probability measures.

Given a $p$-Wasserstein metric or an $f$-divergence, which is defined between two probability measures of the same dimension, we show that it naturally defines \emph{two} different distances for probability measures $\mu$ and $\nu$ on spaces of different dimensions --- we call these the \emph{embedding distance} and \emph{projection distance} respectively. Both these distances are completely natural and are each befitting candidates for the distance we seek; the trouble is that there is not one but two of them, both equally reasonable. The punchline, as we shall prove, is that the two distances are always equal, giving us a unique distance defined on inequidimensional probability measures. We will state this result more precisely after introducing a few notations.

To the best of our knowledge --- and we have one of our referees to thank for filling us in on this --- the only alternative for defining a distance between probability measures of different dimensions is the \emph{Gromov--Wasserstein distance} proposed in \cite{Memoli}. As will be evident from our description below, we adopt a `bottom-up' approach that begins from first principles and requires nothing aside from the most basic definitions. On the other hand, the approach in \cite{Memoli} is a `top-down' one by adapting the vastly more general and powerful Gromov--Hausdorff distance to a special case. Our construction works with a wide variety of common metrics and divergences mentioned in first paragraph. Although the work in \cite{Memoli} is restricted to the $2$-Wasserstein metric, it is  conceivable that the framework therein would apply more generally to other metrics as well; however, it is not obvious how the framework might apply to divergences given that the Gromov--Hausdorff approach requires a metric. In the one case that allows a comparison, namely, applying the two different constructions to the $2$-Wasserstein metric to obtain distances on probability measures of different dimensions, they lead to different results. We are of the opinion that both approaches are useful although our simplistic approach is more likely to yield distances that have closed-form expressions or are readily computable, as we will see in Section~\ref{sec:eg}; the Gromov--Wasserstein distance tends to be NP-hard \cite{Quad} and closed-form expression are rare and not easy to obtain \cite{SDD}.

\subsection{Main result}

Let $\M(\Omega)$ denote the set of all Borel probability measures on  $\Omega \subseteq \mathbb{R}^n$  and let $\Mp(\Omega) \subseteq \M(\Omega)$ denote those with finite $p$th moments, $p \in \mathbb{N}$. For any $m,n \in \mathbb{N}$, $m \le n$, we write
\[
\O(m,n)\coloneqq\{V\in \mathbb{R}^{m\times n}: VV^\tp =I_m\},
\]
i.e., the Stiefel manifold of $m \times n$ matrices with orthonormal rows. We write $\O(n) \coloneqq \O(n,n)$ for the orthogonal group. For any $V \in \O(m,n)$ and $b \in \mathbb{R}^{m}$, let
\[
\varphi_{V,b} : \mathbb{R}^n \to \mathbb{R}^m, \qquad \varphi_{V,b}(x) = Vx+b;
\]
and for any $\mu \in \M(\mathbb{R}^n)$, let $\varphi_{V,b}(\mu )\coloneqq \mu \circ \varphi_{V,b}^{-1}$ be the pushforward measure. For simplicity, we write $\varphi_V \coloneqq \varphi_{V,0}$ when $b=0$. More generally, for any measurable map $\varphi: \mathbb{R}^n \to \mathbb{R}^m$, we let $\varphi(\mu)\coloneqq \mu \circ \varphi$ denote the pushforward measure.

For any $m,n \in \mathbb{N}$, there is no loss of generality in assuming that $m \le n$ for the remainder of our article. Our goal is to define a distance $d(\mu,\nu)$ for  measures $\mu \in  \M(\Omega_1) $ and $\nu \in  \M(\Omega_2) $ where $\Omega_1 \subseteq \mathbb{R}^m$ and $\Omega_2 \subseteq \mathbb{R}^n$, and where by `distance' we include both metrics and divergences. Again, there is no loss of generality in assuming that
\begin{equation}\label{eq:assume}
\Omega_1 = \mathbb{R}^m, \qquad \Omega_2 = \mathbb{R}^n
\end{equation}
since we may simply restrict our attention to measures supported on smaller subsets. Henceforth, we will assume \eqref{eq:assume}. We call $\mu$ and $\nu$ an $m$- and $n$-dimensional measure  respectively. 

We begin by defining the projection and embedding of measures. These are measure theoretic analogues of the Schubert varieties in \cite{YL} and we choose notations similar to \cite{YL}.
\begin{definition}\label{def:proemb}
Let $m,n \in \mathbb{N}$, $m \le n$. For any $\mu \in \M(\mathbb{R}^m)$ and $\nu \in \M(\mathbb{R}^n)$, 
the \emph{embeddings} of $\mu$ into $\mathbb{R}^n$ are the set of $n$-dimensional measures
\begin{multline*}
\Em(\mu,n)\coloneqq\{\alpha \in \M(\mathbb{R}^n):\varphi_{V,b}(\alpha )=\mu \\
\text{for some } V \in \O(m,n),\; b\in \mathbb{R}^m \};
\end{multline*}
and the \emph{projections} of $\nu $ onto $\mathbb{R}^m$ are the set of $m$-dimensional measures
\begin{multline*}
\Pm(\nu ,m) \coloneqq \{\beta\in \M(\mathbb{R}^m): \varphi_{V,b}(\nu )=\beta \\
\text{for some } V \in \O(m,n),\; b\in \mathbb{R}^m \}.
\end{multline*}
Let $d$ be any notion of distance on $\M(\mathbb{R}^n)$ for any $n \in \mathbb{N}$. Define the \emph{projection distance}
\[
d^\m(\mu,\nu )\coloneqq\inf_{\beta \in \Pm(\nu ,m)}d(\mu, \beta)
\]
and the \emph{embedding distance}
\[
d^\p(\mu,\nu )\coloneqq\inf_{\alpha \in \Em(\mu,n)} d(\alpha ,\nu ).
\]
\end{definition}
Both $d^\m(\mu,\nu )$ and $d^\p(\mu,\nu )$ are natural ways of defining $d$ on probability measures $\mu$ and $\nu$ of different dimensions. The trouble is that they are just as natural and there is no reason to favor one or the other. Our main result, which resolves this dilemma, may be stated as follows.
\begin{theorem}\label{thm:main}
Let $m,n \in \mathbb{N}$, $m \le n$. Let $d$ be a $p$-Wasserstein metric or an $f$-divergence.
Then
\begin{equation}\label{eq:equal}
d^\m(\mu,\nu ) = d^\p(\mu,\nu ).
\end{equation}
\end{theorem}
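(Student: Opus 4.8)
The plan is to prove the two inequalities $d^\m(\mu,\nu)\le d^\p(\mu,\nu)$ and $d^\p(\mu,\nu)\le d^\m(\mu,\nu)$ separately; the first is routine, while the second needs an explicit lifting construction. Both directions will rest on only two structural features shared by the $p$-Wasserstein metric and every $f$-divergence: \emph{(i) monotonicity under pushforward}, that is, $d(\varphi(\rho),\varphi(\sigma))\le d(\rho,\sigma)$ whenever $\varphi$ is measurable (for an $f$-divergence, the data-processing inequality) or $1$-Lipschitz (for $\W_p$); and \emph{(ii) invariance under isometric bijections}, $d(\psi(\rho),\psi(\sigma))=d(\rho,\sigma)$ when $\psi$ is an isometric bijection of $\mathbb{R}^n$ (for $f$-divergences, any measurable bijection), which follows from (i) applied to $\psi$ and $\psi^{-1}$. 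I will use that $\varphi_{V,b}\colon\mathbb{R}^n\to\mathbb{R}^m$ is $1$-Lipschitz since $V^\tp V\preceq I_n$, and that if $Q\in\O(n)$ is obtained by appending to $V\in\O(m,n)$ rows completing its rows to an orthonormal basis, then $\psi\coloneqq\varphi_{Q,(b,0)}$ is an affine isometric bijection of $\mathbb{R}^n$ with $\varphi_{[I_m\,|\,0]}\circ\psi=\varphi_{V,b}$; thus $\psi$ turns ``projection by $\varphi_{V,b}$'' into the coordinate projection $\mathbb{R}^n=\mathbb{R}^m\times\mathbb{R}^{n-m}\to\mathbb{R}^m$.

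For $d^\m\le d^\p$: for any $\alpha\in\Em(\mu,n)$ with $\varphi_{V,b}(\alpha)=\mu$, the measure $\varphi_{V,b}(\nu)$ belongs to $\Pm(\nu,m)$, so by (i) $d\bigl(\mu,\varphi_{V,b}(\nu)\bigr)=d\bigl(\varphi_{V,b}(\alpha),\varphi_{V,b}(\nu)\bigr)\le d(\alpha,\nu)$; hence $d^\m(\mu,\nu)\le d(\alpha,\nu)$, and taking the infimum over $\alpha$ gives the inequality. For the reverse inequality $d^\p\le d^\m$ --- the substantive direction --- I would fix an arbitrary $\beta\in\Pm(\nu,m)$, say $\beta=\varphi_{V,b}(\nu)$, and construct some $\alpha\in\Em(\mu,n)$ with $d(\alpha,\nu)\le d(\mu,\beta)$. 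The idea is to move everything to $\mathbb{R}^m\times\mathbb{R}^{n-m}$ via the $\psi$ above: set $\tilde\nu\coloneqq\psi(\nu)$, whose marginal on the first factor is precisely $\beta$, and disintegrate $\tilde\nu(dx,dz)=\beta(dx)\,\nu_x(dz)$ (legitimate since $\mathbb{R}^{n-m}$ is Polish). I would then build a measure $\tilde\alpha$ on $\mathbb{R}^m\times\mathbb{R}^{n-m}$ with first marginal $\mu$ and $d(\tilde\alpha,\tilde\nu)\le d(\mu,\beta)$ by \emph{keeping the conditionals $\nu_x$ and replacing the marginal $\beta$ by $\mu$}, and finally set $\alpha\coloneqq\psi^{-1}(\tilde\alpha)$: then $\varphi_{V,b}(\alpha)=\varphi_{[I_m\,|\,0]}(\tilde\alpha)=\mu$ shows $\alpha\in\Em(\mu,n)$, while (ii) gives $d(\alpha,\nu)=d(\tilde\alpha,\tilde\nu)\le d(\mu,\beta)$. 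Taking the infimum over $\beta$ then completes the proof.

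The one genuinely delicate point is the step that replaces the marginal, which must be realized uniformly for both families. For an $f$-divergence, when $\mu\ll\beta$ with $g\coloneqq d\mu/d\beta$, I would take $\tilde\alpha\coloneqq g(x)\,\tilde\nu(dx,dz)$: then $d\tilde\alpha/d\tilde\nu(x,z)=g(x)$, Fubini gives $\D_f(\tilde\alpha\Vert\tilde\nu)=\int f(g(x))\,\beta(dx)=\D_f(\mu\Vert\beta)$, and the first marginal of $\tilde\alpha$ is $g\beta=\mu$; the case $\mu\not\ll\beta$ forces $\D_f(\mu\Vert\beta)=\infty$ and is vacuous, and the singular part in the general case is handled with an arbitrary transition kernel and contributes the same $f'(\infty)$-term to both sides. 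For $\W_p$ (assuming $\mu\in\Mp(\mathbb{R}^m)$, $\nu\in\Mp(\mathbb{R}^n)$), I would instead take an optimal coupling $\gamma\in\Gamma(\mu,\beta)$ and let $\pi$ be the joint law of $(x,z,x',z')$ in which $(x,x')\sim\gamma$ and, conditionally on $(x,x')$, $z=z'\sim\nu_{x'}$; its $(x',z')$-marginal is $\tilde\nu$, its $(x,z)$-marginal $\tilde\alpha$ has first marginal $\mu$, and its transport cost equals $\int\|x-x'\|_2^p\,d\gamma=\W_p(\mu,\beta)^p$ (because $z=z'$ almost surely), so $\W_p(\tilde\alpha,\tilde\nu)\le\W_p(\mu,\beta)$. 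The obstacles I expect are therefore entirely measure-theoretic bookkeeping --- existence and measurability of the disintegration, the singular-part and convention issues for $f$-divergences, and existence of the optimal coupling --- rather than conceptual; the conceptual content is all in the passage to $\mathbb{R}^m\times\mathbb{R}^{n-m}$ via $Q$ together with the ``keep conditionals, swap marginal'' recipe.
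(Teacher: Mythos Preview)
Your proposal is correct and follows essentially the same route as the paper: the easy direction via monotonicity (the paper's Lemmas~2.1 and~3.3), and the hard direction by disintegrating $\nu$ along the projection, then for $f$-divergence replacing the base marginal $\beta$ by $\mu$ while keeping the conditional kernel, and for $\W_p$ lifting an optimal $\Gamma(\mu,\beta)$-coupling by setting the orthogonal-complement coordinates equal. The only difference is packaging --- you pass through the explicit isometry $\psi=\varphi_{Q,(b,0)}$ to reduce to the coordinate projection $\mathbb{R}^m\times\mathbb{R}^{n-m}\to\mathbb{R}^m$, whereas the paper carries out the same construction directly in the original coordinates via $\varphi_{V_*,b_*}$ and the complementary $\varphi_{W_*}$; the paper also spells out the ``bookkeeping'' you flag (e.g.\ that the lifted measure lies in $\Mp(\mathbb{R}^n)$), which you should too.
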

The common value in \eqref{eq:equal}, denoted $\widehat{d}(\mu,\nu )$, defines a distance between $\mu$ and $\nu$ and serves as our answer to the question on page~\pageref{question}. We will prove Theorem~\ref{thm:main} for $p$-Wasserstein metric (Theorem~\ref{thm:W}) and for $f$-divergence (Theorem~\ref{thm:f}).  Jensen--Shannon divergence (Theorem~\ref{thm:JS}) and total variation metric (Theorem~\ref{thm:tv}) require separate treatments since the definition of $p$-Wasserstein metric requires that $\mu$ and $\nu$ have finite $p$th moments and the definition of $f$-divergence requires that $\mu$ and $\nu$ have densities, assumptions that we do not need for Jensen--Shannon divergence and total variation metric. While the proofs of Theorems~\ref{thm:W}, \ref{thm:f}, \ref{thm:JS}, and \ref{thm:tv} follow a similar broad outline, the subtle details are different and depend on the specific distance involved. 

An important departure from the results in \cite{LSY,YL} is that in general $\widehat{d}(\mu,\nu ) \ne d(\mu,\nu )$ when $m = n$ although the Gromov--Wasserstein distance \cite{Memoli} mentioned earlier also lacks this property. To see this, we state a more general corollary.
\begin{corollary}\label{cor:n&scond}
Let $m,n \in \mathbb{N}$, $m \le n$. Let $d$ be a $p$-Wasserstein metric, a Jensen--Shannon divergence, a total variation metric, or an $f$-divergence. Then $\widehat{d}(\mu,\nu)  = d^\m(\mu,\nu) = d^\p(\mu,\nu) = 0$ if and only if $\varphi_{V,b}(\nu) = \mu$ for some $V\in \O(m,n)$ and $b\in \mathbb{R}^m$.
\end{corollary}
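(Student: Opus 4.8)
The plan is to derive the corollary from the equality $d^\m = d^\p$ (Theorem~\ref{thm:main}, together with its counterparts for the Jensen--Shannon and total variation cases) plus a compactness argument over the Stiefel manifold $\O(m,n)$; only the ``$\Rightarrow$'' direction requires real work.

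\emph{Sufficiency} is immediate: if $\varphi_{V,b}(\nu ) = \mu$ for some $V\in\O(m,n)$, $b\in\mathbb{R}^m$, then $\mu\in\Pm(\nu,m)$ by Definition~\ref{def:proemb}, so $0\le d^\m(\mu,\nu )\le d(\mu,\mu)=0$, and $d^\p(\mu,\nu )=\widehat d(\mu,\nu )=0$ then follows from $d^\m=d^\p$.

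For \emph{necessity}, assume $d^\m(\mu,\nu )=0$ and pick a minimizing sequence $\beta_k=\varphi_{V_k,b_k}(\nu )\in\Pm(\nu,m)$ with $d(\mu,\beta_k)\to0$. I would argue in two steps. First, in each of the four cases $d(\mu,\beta_k)\to0$ forces $\beta_k\to\mu$ weakly: for total variation this is trivial, for the $f$-divergence and Jensen--Shannon divergence it follows from a Pinsker-type inequality bounding $\lVert\mu-\beta_k\rVert_{\tv}$ above by an increasing function of $d(\mu,\beta_k)$, and for the $p$-Wasserstein metric it is standard on $\Mp(\mathbb{R}^m)$. (In the $f$-divergence case one also observes that each $\beta_k$ is absolutely continuous, being the pushforward of the absolutely continuous measure $\nu$ under the full-row-rank affine map $\varphi_{V_k,b_k}$, so that $d(\mu,\beta_k)$ is well-defined.) Second, a compactness argument: since $\O(m,n)$ is compact, after passing to a subsequence $V_k\to V\in\O(m,n)$; since $\lVert V_k\rVert_{\mathrm{op}}=1$ the family $\{\varphi_{V_k}(\nu ):k\in\mathbb{N}\}$ is tight, and because $\beta_k$ is the translate of $\varphi_{V_k}(\nu )$ by $b_k$ while $\{\beta_k\}$ is tight (being weakly convergent), the $b_k$ must be bounded, so along a further subsequence $b_k\to b\in\mathbb{R}^m$. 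Then $\varphi_{V_k,b_k}\to\varphi_{V,b}$ pointwise, hence $\int g\,d\beta_k=\int g(V_kx+b_k)\,d\nu (x)\to\int g\,d\varphi_{V,b}(\nu )$ for every bounded continuous $g$ by dominated convergence, i.e.\ $\beta_k\to\varphi_{V,b}(\nu )$ weakly. By uniqueness of weak limits $\mu=\varphi_{V,b}(\nu )$ with $V\in\O(m,n)$ and $b\in\mathbb{R}^m$, which is what we wanted.

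The main obstacle is the step in necessity showing the translations $b_k$ stay bounded: a priori the infimum defining $d^\m(\mu,\nu )$ could be approached only by sending $\lvert b_k\rvert\to\infty$, and ruling this out requires combining tightness of $\{\varphi_{V_k}(\nu )\}$ (which rests on $\lVert V_k\rVert_{\mathrm{op}}=1$) with the weak precompactness, via Prokhorov's theorem, of any $d$-minimizing sequence. Everything else is either immediate or a routine limiting argument; the $f$-divergence case additionally needs only the remark that pushforwards of absolutely continuous measures under surjective affine maps remain absolutely continuous. Note that this does not need $d$ to separate points: the contradiction comes purely from two distinct weak limits of the same sequence.
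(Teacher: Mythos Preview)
The paper states Corollary~\ref{cor:n&scond} without proof, so there is no argument of the authors' to compare yours against. Your approach is sound and would constitute a valid proof: sufficiency is immediate, and for necessity your compactness argument over $\O(m,n)\times\mathbb{R}^m$ is correct, including the key step that tightness of the weakly convergent sequence $\{\beta_k\}$ combined with uniform tightness of $\{\varphi_{V_k}(\nu)\}$ (from $\lVert V_k\rVert_{\mathrm{op}}=1$ and tightness of the single measure $\nu$) forces $\{b_k\}$ to be bounded.

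One point deserves slightly more care than you give it: the assertion that $\D_f(\mu\Vert\beta_k)\to0$ implies $\beta_k\to\mu$ in total variation. This is true whenever $f$ is strictly convex at $1$---without which the corollary itself fails, since then $\D_f$ does not separate points---but it is not a ``Pinsker-type inequality'' in the usual quantitative sense for arbitrary such $f$. A clean justification: normalise so that $f\ge0$; for each $\varepsilon\in(0,1)$ set $c(\varepsilon)\coloneqq\min\{f(1-\varepsilon),f(1+\varepsilon)\}>0$ and $g_k\coloneqq d\mu/d\beta_k$. Then $\beta_k(|g_k-1|\ge\varepsilon)\le \D_f(\mu\Vert\beta_k)/c(\varepsilon)\to0$, and since $\int g_k\,d\beta_k=1$ one gets $\int_{|g_k-1|\ge\varepsilon}(g_k+1)\,d\beta_k\to 0+\varepsilon$ and hence $\limsup_k d_{\tv}(\mu,\beta_k)\le\varepsilon$. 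The Jensen--Shannon case is of course covered by the genuine Pinsker inequality, and the $p$-Wasserstein and total variation cases are standard as you say.
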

Corollary~\ref{cor:n&scond} gives a necessary and sufficient condition for $\widehat{d}(\mu,\nu)$ to be zero, saying that this happens if only if the two measures $\mu$ and $\nu$ are rotated and translated copies of each other, modulo embedding in a higher-dimensional ambient space when $m \ne n$. For any $d$ that is not  rotationally invariant, we will generally have $\widehat{d}(\mu,\nu ) \ne d(\mu,\nu )$ when $m = n$.

The discussion in the previous paragraph notwithstanding, the distance $\widehat{d}$ has several nice  features. Firstly, it preserves certain well-known relations satisfied by the original distance $d$. For example, we know that for $p \le q$, the $p$- and $q$-Wasserstein metrics satisfy $\W_p(\mu, \nu) \le \W_q(\mu, \nu)$ for measures $\mu,\nu$ of the same dimension; we will see in Corollary~\ref{cor:p<q} that
\[
\widehat{\W}_p(\mu, \nu) \le \widehat{\W}_q(\mu, \nu)
\]
for measures $\mu,\nu$ of \emph{different} dimensions. 
Secondly, as our construction applies consistently across a wide variety of distances, both metrics and divergences, relations between different types of distances can also be preserved. For example,  the total variation metric and KL-divergence satisfy Pinker's inequality $d_{\tv}(\mu, \nu )^2\le 1/2\D_{\kl}(\mu \Vert \nu )$ for measures $\mu,\nu$ of the same dimension; we will see in Corollary~\ref{cor:pink} that 
\[
\widehat{d}_{\tv}(\mu, \nu )^2\le \frac{1}{2} \widehat{\D}_{\kl}(\mu \Vert \nu )
\]
for measures $\mu,\nu$ of \emph{different} dimensions. As another example, the Hellinger squared divergence and total variation metric satisfy $\D_{\hl}(\mu,\nu)^2 \le 2 d_{\tv}(\mu, \nu) \le \sqrt{2} \D_{\hl}(\mu,\nu)$
for measures $\mu,\nu$ of the same dimension; we will see in Corollary~\ref{cor:htv} that
\[
\widehat{\D}_{\hl}(\mu,\nu)^2 \le 2\widehat{d}_{\tv}(\mu, \nu) \le \sqrt{2} \widehat{\D}_{\hl}(\mu,\nu)
\]
for measures $\mu,\nu$ of \emph{different} dimensions.

Another advantage of our construction is that for some common distributions, the distance $\widehat{d}$ obtained often has closed-form expression or is readily computable,\footnote{To the extent afforded by the original distance $d$ --- if $d$ has no closed-form expression or is NP-hard, we would not expect  $\widehat{d}$ to be any different.} as we will see in Section~\ref{sec:eg}. In particular, we will have an explicit answer for the rhetorical question in the abstract: What is the KL-divergence between the uniform distribution on $[-1,1]$ and a Gaussian distribution on $\mathbb{R}^3$?

\subsection{Background}

For easy reference, we remind the reader of two results.
\begin{theorem}[Hahn Decomposition]\label{thm:Hahn}
Let $\Omega$ be a  measurable space and $\mu$ be a signed measure on the $\sigma$-algebra $\Sigma(\Omega)$. Then there exist  $P$ and $N \in \Sigma(\Omega)$ such that
\begin{enumerate}[\upshape (i)]
\item $P\cup N = \Omega$, $P\cap N=\varnothing$;
\item any $E \in \Sigma(\Omega)$ with $E \subseteq P$ has $\mu(E)\ge 0$;
\item any $E \in \Sigma(\Omega)$ with $E \subseteq N$ has $\mu(E)\le 0$.
\end{enumerate}
\end{theorem}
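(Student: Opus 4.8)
The plan is to run the classical greedy-exhaustion argument. First I would normalize: a signed measure attains at most one of the values $\pm\infty$, so by replacing $\mu$ with $-\mu$ if necessary (which merely swaps the roles of $P$ and $N$) I may assume $\mu(E) < +\infty$ for every $E \in \Sigma(\Omega)$. Call a set $A \in \Sigma(\Omega)$ \emph{positive} if $\mu(E) \ge 0$ for every measurable $E \subseteq A$, and \emph{negative} if $\mu(E) \le 0$ for every such $E$; the goal is then to produce a positive set $P$ whose complement $N = \Omega \setminus P$ is negative, which immediately gives (i)--(iii). Two easy structural facts get recorded at the outset: a measurable subset of a positive set is positive, and a countable union of positive sets is positive (disjointify $B_n = A_n \setminus \bigcup_{k<n} A_k$ and use countable additivity on any $E \subseteq \bigcup_n A_n$).

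The technical heart is the lemma that every $E$ with $\mu(E) > 0$ contains a positive set $A$ with $\mu(A) \ge \mu(E)$. I would prove this by recursively deleting large negative chunks: starting from $A_0 = E$, if $A_{k-1}$ is not already positive, let $n_k$ be the \emph{smallest} positive integer for which some measurable $B_k \subseteq A_{k-1}$ has $\mu(B_k) \le -1/n_k$, and set $A_k = A_{k-1} \setminus B_k$. Because each deleted $B_k$ has negative measure, $\mu(A_k) \ge \mu(A_{k-1})$, so the running measure never drops below $\mu(E)$. If the recursion halts, the final set is positive with measure $\ge \mu(E)$. Otherwise I put $A = E \setminus \bigcup_k B_k$; since the $B_k$ are pairwise disjoint and $\mu(E) = \mu(A) + \sum_k \mu(B_k)$ has finite left side, the fact that $\mu$ never attains $+\infty$ forces the negative series $\sum_k \mu(B_k)$ to converge (were it $-\infty$, we would need $\mu(A) = +\infty$), whence $1/n_k \to 0$ and $n_k \to \infty$. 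This divergence is exactly what makes $A$ positive: a subset $C \subseteq A$ with $\mu(C) < 0$ would satisfy $\mu(C) \le -1/n$ for some fixed $n$, but once $k$ is large enough that $n_k - 1 \ge n$ we get $\mu(C) \le -1/(n_k-1)$ with $C \subseteq A_{k-1}$, contradicting the minimality of $n_k$. Thus no such $C$ exists and $\mu(A) = \mu(E) - \sum_k \mu(B_k) \ge \mu(E)$.

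With the lemma in hand the assembly is short. Let $s = \sup\{\mu(A) : A \text{ positive}\}$, which is $\ge 0$ since $\varnothing$ is positive. Choosing positive sets $A_k$ with $\mu(A_k) \to s$ and setting $P = \bigcup_k A_k$, the union is positive and satisfies $\mu(P) = \mu(A_k) + \mu(P \setminus A_k) \ge \mu(A_k)$ for every $k$ (the tail lies in the positive set $P$), so $\mu(P) = s$; in particular $s < \infty$ automatically because $\mu(P) < +\infty$. To see $N = \Omega \setminus P$ is negative, suppose not: some $E \subseteq N$ has $\mu(E) > 0$, and the lemma yields a positive $A \subseteq E$ with $\mu(A) > 0$. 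Then $P \cup A$ is a positive set (a disjoint union of positive sets) with $\mu(P \cup A) = s + \mu(A) > s$, contradicting the definition of $s$. Hence $N$ is negative and $P, N$ are the required sets.

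The main obstacle is the lemma, and within it the single delicate point is showing that the leftover set $A$ in the non-terminating case is genuinely positive. This is precisely where choosing $n_k$ as the \emph{smallest} admissible threshold — rather than removing an arbitrary negative chunk — is essential: only the forced divergence $n_k \to \infty$ rules out small negative subsets surviving inside $A$.
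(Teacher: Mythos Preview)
Your proof is the standard greedy-exhaustion argument and is correct in all essentials. However, the paper does not actually prove Theorem~\ref{thm:Hahn}: it is stated in the background subsection with the preface ``For easy reference, we remind the reader of two standard results,'' and is then simply invoked later (in Section~\ref{sec:tv}) without proof. So there is no paper proof to compare against; you have supplied a full argument where the authors merely cite the result.
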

The Disintegration Theorem \cite{Pachl} rigorously defines the notion of a nontrivial ``restriction'' of a measure to a measure-zero subset of a measure space. It is famously used to establish the existence of conditional probability measures.
\begin{theorem}[Disintegration Theorem]\label{thm:disin}
Let $\Omega_1$ and $\Omega_2$ be two Radon spaces. Let $\mu \in \M(\Omega_1)$ and $\varphi:\Omega_1 \to \Omega_2$ be a Borel measurable function. Set $\nu \in \M(\Omega_2)$ to be the pushforward measure $\nu =\mu \circ \varphi^{-1}$. Then there exists a $\nu $-almost everywhere uniquely determined family of probability measures $\{\mu _y \in \M(\Omega_1): y\in \Omega_2\} $ such that
\begin{enumerate}[\upshape (i)]
\item the function $\Omega_2 \to \M(\Omega_1)$, $y\mapsto \mu _y$ is Borel measurable, i.e., for any measurable $B \subseteq \Omega_1$, $y\to \mu _y(B)$ is a measurable function of $y$;
\item $\mu _y\bigl(\Omega_1 \setminus \varphi^{-1}(y)\bigr)=0$;
\item for every Borel-measurable function $f:\Omega_1 \to [0,\infty]$,
\[\int_{\Omega_1} f(x)\, d\mu (x)=\int_{\Omega_2} \int_{\varphi^{-1}(y)}f(x)\, d\mu _y(x)\,d\nu (y).
\]
\end{enumerate}
\end{theorem}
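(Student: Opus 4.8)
The statement to prove is the Disintegration Theorem, and my plan is to treat existence and $\nu$-almost-everywhere uniqueness separately, in each case exploiting two features of Radon spaces: they carry a countable measure-determining family of bounded continuous functions, and every positive normalized linear functional on their bounded continuous functions is represented by a Borel probability measure (Riesz--Markov). The underlying idea is that the assignment $y \mapsto \int f\, d\mu_y$ is forced to be a conditional expectation of $f$ given $\varphi$, so the whole construction is governed by the Radon--Nikodym theorem, with separability used to pass from a.e.-defined derivatives to genuine measures.

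I would dispose of uniqueness first, as it motivates the construction. Suppose $\{\mu_y\}$ and $\{\mu_y'\}$ both satisfy (i)--(iii). Fix a bounded Borel $f \colon \Omega_1 \to \mathbb{R}$ and a Borel set $A \subseteq \Omega_2$. Applying (iii) to the function $x \mapsto f(x)\, \mathbf{1}_A(\varphi(x))$ and using (ii) to localize $\mu_y$ on the fiber $\varphi^{-1}(y)$, each family yields
\[
\int_A \Bigl( \int_{\Omega_1} f \, d\mu_y \Bigr)\, d\nu(y) = \int_{\varphi^{-1}(A)} f \, d\mu = \int_A \Bigl( \int_{\Omega_1} f \, d\mu_y' \Bigr)\, d\nu(y).
\]
As $A$ is arbitrary, both inner integrals coincide with the Radon--Nikodym derivative of $A \mapsto \int_{\varphi^{-1}(A)} f \, d\mu$ with respect to $\nu$, hence agree $\nu$-a.e. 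Running $f$ over a countable measure-determining family $\{f_k\} \subseteq C_b(\Omega_1)$ and unioning the countably many resulting null sets produces a single $\nu$-null set off which $\int f_k\, d\mu_y = \int f_k\, d\mu_y'$ for all $k$, whence $\mu_y = \mu_y'$.

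For existence I would reverse this reasoning. Fix a countable $\mathbb{Q}$-linear subspace $\mathcal{A} \subseteq C_b(\Omega_1)$ that contains the constants, is sup-norm measure-determining, and separates points. For each $f \in \mathcal{A}$ the set function $\lambda_f(A) \coloneqq \int_{\varphi^{-1}(A)} f \, d\mu$ satisfies $|\lambda_f(A)| \le \|f\|_\infty\, \nu(A)$, so $\lambda_f \ll \nu$; let $h_f \coloneqq d\lambda_f / d\nu$. Because $\mathcal{A}$ is countable, there is a single $\nu$-null set $N$ off which, simultaneously for all $f, g \in \mathcal{A}$ and $q \in \mathbb{Q}$, the assignment $f \mapsto h_f(y)$ is $\mathbb{Q}$-linear, positive (since $f \ge 0$ forces $\lambda_f \ge 0$), normalized with $h_{\mathbf{1}} \equiv 1$, and bounded by $\|f\|_\infty$. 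For $y \notin N$ this functional extends uniquely to a positive normalized linear functional on $C_b(\Omega_1)$ — here the regularity of the Radon space $\Omega_1$ is what makes Riesz--Markov applicable — furnishing a Borel probability measure $\mu_y$ with $\int f\, d\mu_y = h_f(y)$ for $f \in \mathcal{A}$; for $y \in N$ set $\mu_y \coloneqq \delta_{x_0}$ for a fixed $x_0$. Property (i) then holds for $f \in \mathcal{A}$ by construction and propagates to $y \mapsto \mu_y(B)$ for all Borel $B$ by a monotone-class argument, while (iii) holds for $f \in \mathcal{A}$ by the defining relation of $h_f$ and extends to all nonnegative Borel $f$ by the functional monotone class theorem and monotone convergence.

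The main obstacle is property (ii), the concentration of $\mu_y$ on $\varphi^{-1}(y)$, since nothing constructed so far sees the geometry of $\varphi$. Here I would use a countable family $\{g_k\} \subseteq C_b(\Omega_2)$ separating the points of the Radon space $\Omega_2$. The pushforward identity $\nu = \mu \circ \varphi^{-1}$ together with (iii) applied to $(g_k \circ \varphi)\cdot(\mathbf{1}_A \circ \varphi)$ gives $\int (g_k \circ \varphi)\, d\mu_y = g_k(y)$ for $\nu$-a.e. $y$, and the same computation with $(g_k \circ \varphi)^2$ gives $\int (g_k \circ \varphi)^2\, d\mu_y = g_k(y)^2$; subtracting, the $\mu_y$-variance of $g_k \circ \varphi$ vanishes, so $g_k(\varphi(x)) = g_k(y)$ holds $\mu_y$-a.e., off a $\nu$-null set depending on $k$. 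Taking the union over the countably many $k$ and using that $\{g_k\}$ separates points, I conclude $\varphi(x) = y$ for $\mu_y$-a.e. $x$, i.e. $\mu_y(\Omega_1 \setminus \varphi^{-1}(y)) = 0$, for $\nu$-a.e. $y$ — which is all that (ii) requires, since the family $\{\mu_y\}$ is asserted only $\nu$-a.e. The recurring delicate point throughout is precisely the reduction that lets a \emph{single} null set serve uncountably many $y$, which is exactly where the separability of the Radon spaces and the countability of $\mathcal{A}$ and $\{g_k\}$ are indispensable.
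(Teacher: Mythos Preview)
The paper does not prove this theorem at all: Theorem~\ref{thm:disin} is stated in the background subsection purely for later reference, with the proof delegated to the citation \cite{Pachl}. So there is no ``paper's own proof'' to compare against; you have supplied an argument where the authors chose to quote the literature.

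Your sketch follows the standard route (Radon--Nikodym derivatives indexed by a countable measure-determining family, a.e.\ consistency on a single null set, Riesz--Markov to assemble the pointwise functionals into measures, then a separating family on $\Omega_2$ to force fiber concentration), and the logic is sound. One technical point deserves a sentence of care: the Riesz--Markov step, as usually stated, requires local compactness, which a general Radon space need not have. The customary fix is to embed $\Omega_1$ as a Borel subset of a compact metric space $K$ (possible since Radon spaces are, in the usual definition, Borel-isomorphic to Polish spaces), apply Riesz--Markov on $C(K)$, and then argue that the resulting measures $\mu_y$ give full mass to $\Omega_1$ for $\nu$-a.e.\ $y$ because $\mu(\Omega_1)=1$ forces $\int \mu_y(\Omega_1)\,d\nu(y)=1$. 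Once you insert that compactification step, your outline is complete.
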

In this article, we use the terms `probability measure' and `probability distribution' interchangeably since given a cumulative distribution function $F$ and $A \in \Sigma(\Omega)$, $\mu(A) \coloneqq \int_{x\in A} dF(x)$ defines a probability measure.

\section{Wasserstein metric}\label{sec:wass}

We begin by properly defining the $p$-Wasserstein metric, filling in some details left out in Section~\ref{sec:intro}.  Given two measures $\mu , \nu \in \Mp(\mathbb{R}^n)$ and any $p \in [1,\infty]$, the \emph{$p$-Wasserstein metric}, also called the $L^p$-Wasserstein metric, between them is 
\begin{equation}\label{eq:wass}
\W_p(\mu ,\nu ) \coloneqq \biggl[ \inf_{\gamma \in \Gamma(\mu ,\nu )} \int_{\mathbb{R}^{2n}} \|x-y\|_2^p \, d\gamma(x,y) \biggr]^{1/p}
\end{equation}
where, as usual, $p = \infty$ is interpreted in the limiting sense of essential supremum. Here
\[
\Gamma(\mu ,\nu )\coloneqq \bigl\{\gamma\in \M(\mathbb{R}^{2n}):\pi_1^n(\gamma)= \nu, \; \pi_2^n(\gamma)=\mu \bigr\}
\]
is the set of \emph{couplings} between $\mu$ and $\nu $, where $\pi_1^n:\mathbb{R}^{2n}\to \mathbb{R}^n$ is the projection onto the first $n$ coordinates and $\pi_2^n:\mathbb{R}^{2n}\to \mathbb{R}^n$ the projection to the last $n$ coordinates. The measure $\pi \in \Gamma(\mu ,\nu )$ that attains the minimum in \eqref{eq:wass} is called the \emph{optimal transport coupling}.
For the purpose of this article, we use the standard Euclidean metric $d_{\eu}(x,y)=\|x - y\|_2$ but this may  be replaced by other metrics and $\mathbb{R}^n$ by other metric spaces; in which case \eqref{eq:wass} is just called the \emph{Wasserstein metric} or \emph{transportation distance}. The general definition is due to Villani \cite{villani2009optimal} but the notion has a long history involving the works Fr\'echet \cite{Frechet}, Kantorovich \cite{Kantorovich}, L\'evy \cite{Levy}, Wasserstein \cite{Wasserstein},  and many others. As we mentioned earlier, the $1$-Wasserstein metric is often called the earth mover's metric or Kantorovich metric whereas the 2-Wasserstein metric is sometimes called the L\'evy–Fr\'echet metric \cite{Frechet}.

The Wasserstein metric is widely used in the imaging sciences for capturing geometric features \cite{rubner2000earth,solomon2015convolutional,sandler2011nonnegative}, with a variety of applications including contrast equalization \cite{Delon2004Midway}, texture synthesis \cite{gutierrez2017optimal}, image matching \cite{Wang2013Optimal,Zhu2007Image}, image fusion \cite{courty2016optimal}, medical imaging \cite{wang2010optimal}, shape registration \cite{makihara2010earth}, image watermarking \cite{Mathon2014Optimal}. In economics, it is used to match job seekers with jobs,  determine  real estate prices, form matrimonial unions, among many other things \cite{Galichon2016OT}. Wasserstein metric and optimal transport coupling also show up unexpectedly in areas from astrophysics, where it is used to reconstruct initial conditions of the early universe \cite{frisch2002reconstruction}; to computer music, where it is used to automate music transcriptions \cite{flamary2016optimal}; to machine learning, where it is used for machine translation \cite{zhang2016building} and word embedding \cite{kusner2015word}.

Unlike the $f$-divergence  in Section~\ref{sec:f}, a significant advantage afforded by the Wasserstein distance is that it is finite even when neither measure is absolutely continuous with respect to the other. Our goal is to use $\W_p$ to construct a new distance $\widehat{\W}_p$  so that $\widehat{\W}_p(\mu ,\nu ) $ would be well-defined for  $\mu \in \M(\mathbb{R}^m)$ and $\nu \in \M(\mathbb{R}^n)$ where $m \ne n$. Note any attempt to directly extend the definition in \eqref{eq:wass} to such a scenario would require that we make sense of $\|x-y\|_2$ for $x \in \mathbb{R}^m$ and $y \in \mathbb{R}^n$ --- our approach would avoid this conundrum entirely.
We begin by establishing a simple but crucial lemma.
\begin{lemma}
\label{lem:projWasInequal}
Let $m,n \in \mathbb{N}$, $m \le n$, and $p \in [1,\infty]$. For any $\alpha , \nu \in \Mp(\mathbb{R}^n)$, any $ V \in \O(m,n)$, and any $b\in \mathbb{R}^m$, we have 
\[
\W_p\bigl(\varphi_{V,b}(\alpha ),\varphi_{V,b}(\nu )\bigr)\le \W_p(\alpha,\nu ).
\]
\end{lemma}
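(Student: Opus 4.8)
The plan is to push forward an optimal (or near-optimal) coupling of $\alpha$ and $\nu$ through the map $\varphi_{V,b}$ applied coordinatewise, and to check that the result is a valid coupling of $\varphi_{V,b}(\alpha)$ and $\varphi_{V,b}(\nu)$ whose transport cost is no larger than the original cost. Concretely, fix $\varepsilon > 0$ and choose $\gamma \in \Gamma(\alpha,\nu) \subseteq \M(\mathbb{R}^{2n})$ with $\int_{\mathbb{R}^{2n}} \|x-y\|_2^p \, d\gamma(x,y) \le \W_p(\alpha,\nu)^p + \varepsilon$ (for $p = \infty$, one argues directly with the essential supremum, or simply takes the optimal coupling, whose existence is standard). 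Define $\Psi : \mathbb{R}^n \times \mathbb{R}^n \to \mathbb{R}^m \times \mathbb{R}^m$ by $\Psi(x,y) = (\varphi_{V,b}(x), \varphi_{V,b}(y)) = (Vx+b, Vy+b)$, and set $\widetilde\gamma \coloneqq \Psi(\gamma) = \gamma \circ \Psi^{-1} \in \M(\mathbb{R}^{2m})$.

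The first key step is to verify that $\widetilde\gamma \in \Gamma(\varphi_{V,b}(\alpha), \varphi_{V,b}(\nu))$. This is a routine check: the first marginal of $\widetilde\gamma$ is the pushforward of the first marginal of $\gamma$ under $\varphi_{V,b}$, i.e. $\varphi_{V,b}(\alpha)$, and similarly for the second marginal. The second key step is the cost comparison. Since $V$ has orthonormal rows, for any $x, y \in \mathbb{R}^n$ we have the contraction
\[
\|\varphi_{V,b}(x) - \varphi_{V,b}(y)\|_2 = \|V(x-y)\|_2 \le \|x-y\|_2,
\]
because $\|Vz\|_2^2 = z^\tp V^\tp V z \le \|z\|_2^2$ (the eigenvalues of $V^\tp V$ lie in $[0,1]$ as $VV^\tp = I_m$). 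Hence, applying the change-of-variables formula for pushforwards,
\[
\int_{\mathbb{R}^{2m}} \|u - v\|_2^p \, d\widetilde\gamma(u,v) = \int_{\mathbb{R}^{2n}} \|V(x-y)\|_2^p \, d\gamma(x,y) \le \int_{\mathbb{R}^{2n}} \|x-y\|_2^p \, d\gamma(x,y) \le \W_p(\alpha,\nu)^p + \varepsilon.
\]
Since $\widetilde\gamma$ is a feasible coupling, $\W_p(\varphi_{V,b}(\alpha), \varphi_{V,b}(\nu))^p \le \W_p(\alpha,\nu)^p + \varepsilon$; letting $\varepsilon \to 0$ and taking $p$th roots gives the claim. (One should also note in passing that $\varphi_{V,b}(\alpha), \varphi_{V,b}(\nu) \in \Mp(\mathbb{R}^m)$, so that $\W_p$ on the left side is well-defined: a linear map sends finite $p$th moments to finite $p$th moments, since $\|Vx+b\|_2 \le \|x\|_2 + \|b\|_2$.)

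I do not anticipate a genuine obstacle here — the lemma is essentially the statement that $1$-Lipschitz maps are non-expansive for the Wasserstein metric, specialized to the linear contraction $\varphi_{V,b}$. The only points requiring minor care are the $p = \infty$ case (where one replaces the integral inequality by an essential-supremum inequality, using that $\widetilde\gamma$-almost-everywhere $\|u-v\|_2 \le \|x-y\|_2$ along the pushforward) and the verification that pushing a coupling through a product map yields a coupling of the pushed-forward marginals, which is a direct application of the definition of pushforward measure together with Fubini/marginalization.
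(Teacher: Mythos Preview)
Your proposal is correct and follows essentially the same route as the paper: push forward a (near-)optimal coupling of $\alpha$ and $\nu$ through the product map $(x,y)\mapsto(\varphi_{V,b}(x),\varphi_{V,b}(y))$, check that the result is a coupling of $\varphi_{V,b}(\alpha)$ and $\varphi_{V,b}(\nu)$, and use the contraction $\|\varphi_{V,b}(x)-\varphi_{V,b}(y)\|_2\le\|x-y\|_2$. The only cosmetic difference is that the paper works directly with the optimal coupling rather than an $\varepsilon$-optimal one.
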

\begin{proof}
Let $\gamma \in \M(\mathbb{R}^{2n})$ be the optimal transport coupling for $\W_p(\alpha,v)$.  Consider the measurable map 
\[
    \overline{\varphi}_{V,b}: \mathbb{R}^{2n} \to \mathbb{R}^{2m}, \quad (x,y) \mapsto \bigl(\varphi_{V,b}(x),\varphi_{V,b}(y)\bigr),
\]
and define $\gamma_\p = \overline{\varphi}_{V,b}(\gamma)$. As $\varphi_{V,b} \circ \pi_i^n = \pi_i^m \circ \overline{\varphi}_{V,b}$,
\[
    \pi_1^m (\gamma_\p) = \varphi_{V,b}(\alpha),\quad \pi_2^m(\gamma_\p) = \varphi_{V,b}(\beta),    
\]
and thus $\gamma_\p (x,y)\in \Gamma\bigl(\varphi_{V,b}(\alpha),\varphi_{V,b}(\nu)\bigr)$. Now
\begin{align*}
\W_p(\varphi_{V,b}&(\alpha), \varphi_{V,b}(\nu)) \le
\int_{x\in \mathbb{R}^m,\; y\in \mathbb{R}^m}\|x-y\|_2^p \,d\gamma_\p (x,y)\\
&= \int_{z\in \mathbb{R}^n,\; w\in \mathbb{R}^n}\|\varphi_{V,b}(z)-\varphi_{V,b}(w)\|_2^p\,d\gamma(z,w)\\ 
&\le \int_{z\in \mathbb{R}^n,\; w\in \mathbb{R}^n}\|z-w\|_2^p\,d\gamma(z,w),
\end{align*}
and taking $p$th root gives the result. The last inequality follows from $\|\varphi_{V,b}(z) -\varphi_{V,b}(w)\|_2\le \|z-w\|_2$ as $\varphi_{V,b}$ is an orthogonal projection plus a translation.
\end{proof}
Lemma~\ref{lem:projWasInequal} assures that $\Pm(\mu,m) \subseteq \Mp(\mathbb{R}^m)$ but in general we may not have $\Em(\nu,n)\subseteq \Mp(\mathbb{R}^n)$. With this in mind, we introduce the set
\begin{multline*}
\Em_p(\mu,n) \coloneqq \{\alpha \in \Mp(\mathbb{R}^n): \varphi_{V,b}(\alpha) = \mu \\
\text{for some } V\in \O(m,n), \; b \in \mathbb{R}^{m}\}
\end{multline*}
for use in the next result, which shows that projection and embedding Wasserstein distances are always equal.
\begin{theorem}\label{thm:W}
Let $m,n \in \mathbb{N}$, $m \le n$, and $p \in [1,\infty]$. 
For $\mu\in \Mp(\mathbb{R}^m)$ and $\nu \in \Mp(\mathbb{R}^n)$, let
\begin{align*}
\PW_p(\mu,\nu )&\coloneqq\inf_{\beta \in \Pm(\nu ,m)}\W_p(\mu, \beta),\\
\EW_p(\mu,\nu )&\coloneqq\inf_{\alpha \in \Em_p(\mu,n)}\W_p(\alpha ,\nu ).
\end{align*}
Then
\begin{equation}\label{eq:W}
\PW_p(\mu,\nu )=\EW_p(\mu,\nu ).
\end{equation}
\end{theorem}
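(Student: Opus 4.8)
The plan is to establish the two inequalities $\PW_p(\mu,\nu) \le \EW_p(\mu,\nu)$ and $\EW_p(\mu,\nu) \le \PW_p(\mu,\nu)$ separately. The first is immediate from Lemma~\ref{lem:projWasInequal}: given $\alpha \in \Em_p(\mu,n)$ with $\varphi_{V,b}(\alpha) = \mu$ for some $V \in \O(m,n)$ and $b \in \mathbb{R}^m$, the measure $\beta \coloneqq \varphi_{V,b}(\nu)$ lies in $\Pm(\nu,m)$, and the lemma gives $\W_p(\mu,\beta) = \W_p(\varphi_{V,b}(\alpha),\varphi_{V,b}(\nu)) \le \W_p(\alpha,\nu)$; taking the infimum over $\alpha \in \Em_p(\mu,n)$ then yields $\PW_p(\mu,\nu) \le \EW_p(\mu,\nu)$.

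For the reverse inequality I would fix an arbitrary $\beta \in \Pm(\nu,m)$, say $\beta = \varphi_{V,b}(\nu)$, pick an optimal coupling $\pi \in \Gamma(\mu,\beta)$ for $\W_p(\mu,\beta)$ (legitimate since $\beta \in \Mp(\mathbb{R}^m)$ by the remark following Lemma~\ref{lem:projWasInequal}), and build from $\pi$ a coupling of no larger cost between $\nu$ and some $\alpha \in \Em_p(\mu,n)$. The key device is the affine map
\[
T : \mathbb{R}^m \times \mathbb{R}^n \to \mathbb{R}^n, \qquad T(x,w) \coloneqq w + V^\tp\bigl(x - \varphi_{V,b}(w)\bigr),
\]
for which a direct computation using $VV^\tp = I_m$ shows $\varphi_{V,b}\bigl(T(x,w)\bigr) = x$ and $\|T(x,w)-w\|_2 = \|x - \varphi_{V,b}(w)\|_2$ for all $x,w$; in particular $T(x,\cdot)$ carries the fiber $\varphi_{V,b}^{-1}(y)$ onto $\varphi_{V,b}^{-1}(x)$, moving each point by exactly $\|x-y\|_2$. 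Since each $\mathbb{R}^k$ is a Radon space, the Disintegration Theorem (Theorem~\ref{thm:disin}) supplies a Borel-measurable family $\{\nu_y : y \in \mathbb{R}^m\}$ disintegrating $\nu$ along $\varphi_{V,b}$ (so that $\nu_y$ is carried by $\varphi_{V,b}^{-1}(y)$ and $\nu = \int \nu_y\,d\beta(y)$) and, disintegrating $\pi$ over its second marginal $\beta$, a Borel-measurable family $\{\pi_y : y \in \mathbb{R}^m\}$ of probability measures on $\mathbb{R}^m$. I would then set $\rho \coloneqq \int_{\mathbb{R}^m}(\pi_y \otimes \nu_y)\,d\beta(y)$, a Borel probability measure on $\mathbb{R}^m \times \mathbb{R}^n$ whose $\mathbb{R}^m$- and $\mathbb{R}^n$-marginals are $\mu$ and $\nu$ respectively, and take $\Pi$ to be the pushforward of $\rho$ under $(x,w)\mapsto\bigl(T(x,w),w\bigr)$, i.e.
\[
\int g\,d\Pi = \int_{\mathbb{R}^m}\int_{\mathbb{R}^n}\int_{\mathbb{R}^m} g\bigl(T(x,w),w\bigr)\,d\pi_y(x)\,d\nu_y(w)\,d\beta(y).
\]

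What then remains are four routine verifications, each a Fubini-type computation using that $\pi_y,\nu_y$ are probability measures and $\nu_y$ is carried by $\varphi_{V,b}^{-1}(y)$: (i) the second marginal of $\Pi$ is $\nu$; (ii) writing $\alpha$ for the first marginal of $\Pi$, one has $\varphi_{V,b}(\alpha) = \mu$, and since $\alpha$ is the image of $\rho$ under $T$ with $\|T(x,w)\|_2 \le 2\|w\|_2 + \|x\|_2 + \|b\|_2$, the finiteness of the $p$th moments of $\mu$ and $\nu$ forces $\alpha \in \Mp(\mathbb{R}^n)$, so $\alpha \in \Em_p(\mu,n)$; (iii) therefore $\Pi \in \Gamma(\alpha,\nu)$; (iv) since $\|T(x,w)-w\|_2 = \|x-y\|_2$ for $\nu_y$-almost every $w$, the law of $\|z-w\|_2$ under $\Pi$ coincides with that of $\|x-y\|_2$ under $\pi$, whence $\int\|z-w\|_2^p\,d\Pi = \W_p(\mu,\beta)^p$. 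It follows that $\W_p(\alpha,\nu)\le\W_p(\mu,\beta)$, hence $\EW_p(\mu,\nu)\le\W_p(\mu,\beta)$, and taking the infimum over $\beta\in\Pm(\nu,m)$ gives $\EW_p(\mu,\nu)\le\PW_p(\mu,\nu)$. For $p=\infty$ the same construction works, with the $L^p$ cost replaced by the essential supremum of $\|z-w\|_2$ throughout.

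The geometric heart of the argument, the rigid fiberwise transport map $T$, is elementary; the main obstacle is the measure-theoretic bookkeeping around $\Pi$ --- verifying that the two disintegrations apply (one needs only that $\varphi_{V,b}$ and the coordinate projection $\mathbb{R}^m\times\mathbb{R}^m\to\mathbb{R}^m$ are Borel and that $\mathbb{R}^k$ is Radon), that $y\mapsto\pi_y\otimes\nu_y$ is measurable so that $\rho$, and hence $\Pi$, is a genuine Borel probability measure, and that all the interchanges of integration in (i)--(iv) are justified. None of this is deep, but it requires care.
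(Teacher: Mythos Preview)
Your proof is correct and follows essentially the same strategy as the paper: both use Lemma~\ref{lem:projWasInequal} for $\PW_p \le \EW_p$, and for the reverse both lift an optimal coupling $\pi \in \Gamma(\mu,\beta)$ to a coupling in $\Gamma(\alpha,\nu)$ by keeping the coordinates orthogonal to $V$ fixed. Your explicit affine map $T(x,w) = w + V^\tp(x-\varphi_{V,b}(w))$ is a cleaner and more rigorous way of encoding exactly what the paper expresses informally via the constraint $\varphi_{W_*}(x)=\varphi_{W_*}(y)$ and a Dirac delta; the resulting coupling $\Pi$ and the paper's $\gamma_\p$ are the same object.
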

\begin{proof}
It is easy to deduce that $\PW_p(\mu, \nu )\le \EW_p(\mu,\nu )$: For any $\alpha \in \Em(\mu, n)$,  there exists $V_\alpha \in \O(m,n)$ and $b_\alpha \in \mathbb{R}^m$ with $\varphi_{V_\alpha, b_\alpha}(\alpha )=\mu$. It follows from Lemma~\ref{lem:projWasInequal} that $\W_p(\alpha ,\nu )\ge \W_p\bigl(\mu,\varphi_{V_\alpha, b_\alpha}(\nu )\bigr)$ and thus
\begin{align*}
\inf_{\alpha \in \Em(\mu,n)} \W_p(\alpha ,\nu )
&\ge\inf_{\alpha \in \Em(\mu, n)}\W_p\bigl(\mu, \varphi_{V_\alpha, b_\alpha}(\nu ) \bigr)\\
&\ge\inf_{V\in \O(m,n), \;b\in \mathbb{R}^m}\W_p\bigl(\mu, \varphi_{V,b}(\nu )\bigr).
\end{align*}

The bulk of the work is to show that $\PW_p(\mu,\nu )\ge \EW_p(\mu,\nu )$. Let $\varepsilon>0$ be arbitrary. Then there exists $\beta_* \in \Pm(\nu, m)$ with
\[
\W_p(\mu, \beta_*)\le \PW_p(\mu,\nu )+\varepsilon.
\]
Let $V_* \in \O(m,n)$ and $b_*\in \mathbb{R}^m$ be such that $\varphi_{V_*, b_*}(\nu) = \beta_*$ and $W_* \in \O(n-m,n)$ be such that $\begin{bsmallmatrix} V_*\\ W_*\end{bsmallmatrix} \in \O(n)$. Then $\varphi_{W_*}$ is the complementary projection of $\varphi_{V_*,b_*}$. Applying Theorem~\ref{thm:disin} to  $\varphi_{V_*, b_*}$, we obtain a family of measures $\{\nu_y \in  \M(\mathbb{R}^n) : y \in \mathbb{R}^m\}$ that satisfy
\[
\int_{\mathbb{R}^n} f(x)\,d\nu (x) =\int_{\mathbb{R}^m}\int_{\varphi_{V_*,b_*}^{-1}(y)} f(x)\, d\nu_y(x)\, d\beta_*(y)
\]
for any measurable function $f$.

Let $\gamma \in \Gamma(\beta_*,\mu)$ be the optimal transport coupling attaining $\W_p(\beta_*,\mu)$. Then
\[
\pi_1^m(\gamma) = \beta_*, \qquad
\pi_2^m(\gamma) = \mu.
\]
We define a new measure $\gamma_\p \in \M(\mathbb{R}^{2n})$ that will in turn give us a measure $\alpha_* \in \Em_p(\mu,n)$ with $\W_p(\alpha_*, \nu) \le \W_p(\mu, \beta_*)$. Firstly, we will define an intermediate probability measure $\tilde \gamma$ in $\M(\mathbb{R}^{n+m})$. For any measurable set $S\subseteq \mathbb{R}^{n+m}$, we define 
\[
    \tilde \gamma(S) \coloneqq \int_{(y,z)\in\mathbb{R}^{2m}}\int_{\varphi_{V_*,b_*}^{-1}(y)} \mathbb{I}_{(x, z)\in S}\,d\nu_y(x)\, d \gamma(y,z)
\]
where $\mathbb{I}$ denotes an indicator function. Consider the map
\[
    \rho:   \mathbb{R}^{n+m} \to \mathbb{R}^{2n}, \quad (x,z) \mapsto \bigl(x, V_*^\tp (z-b_*)+W_*^\tp W_*x\bigr)
\]
with $x\in \mathbb{R}^n$, $z \in \mathbb{R}^m$. Observe that this is an embedding of $\mathbb{R}^{n+m}$ into $\mathbb{R}^{2n}$. If we let $(x,y) = \rho\big((x,z)\big)$, then we find that $\varphi_{V_*,b_*}(y) = z$ and  $\varphi_{W_*}(y)= \varphi_{W_*}(x)$. We define $\gamma_\p$ to be the pushforward measure $\rho(\tilde \gamma)$. Next we will prove that $\pi_1^n(\gamma_\p) = \nu$. For any measurable set $S  \subseteq \mathbb{R}^n$, we have 
\begin{align*}
\pi_1^n&(\gamma_\p)(S) = \gamma_\p\big((\pi_1^n)^{-1}(S)\big) = \gamma_\p(S\times \mathbb{R}^n) \\
    &= \tilde \gamma\big(\rho^{-1}(S\times \mathbb{R}^n)\big) = \tilde \gamma (S\times \mathbb{R}^m)\\
    & =  \int_{(y,z)\in\mathbb{R}^{2m}}\int_{\varphi_{V_*,b_*}^{-1}(y)}\mathbb{I}_{(x, z)\in S\times \mathbb{R}^m}\,d\nu_y(x)\, d \gamma(y,z)\\ 
    & = \int_{(y,z)\in\mathbb{R}^{2m}}\int_{\varphi_{V_*,b_*}^{-1}(y)}\mathbb{I}_{x\in S}\,d\nu_y(x)\, d \gamma(y,z)\\
    & = \int_{(y,z)\in\mathbb{R}^{2m}}\int_{\varphi_{V_*,b_*}^{-1}(y)}\mathbb{I}_{x\in S}\,d\nu_y(x)\, d \beta_*(y) = \nu(S).
\end{align*}
Note that the first four equalities follow from the definition of the pushforward measure. For the next-to-last equality, observe that the indicator function $\mathbb{I}_{x\in S}$ is only a function of $y$. Hence $\nu (x)$ is a marginal measure of $\gamma_\p $.
Let $\alpha _*\in \M(\mathbb{R}^n)$ be defined by $\alpha _*=\pi_2^n(\gamma_\p)$. Then
\begingroup
\allowdisplaybreaks
\begin{align*}
&\int_{y\in \mathbb{R}^n} \|y\|_2^p \, d\alpha_*(y) = \int_{y\in \mathbb{R}^n} \bigl(\|y\|_2^{2}\bigr)^{p/2} \, d\alpha_*(y)\\
&= \int_{y\in \mathbb{R}^n} \bigl(\|\varphi_{V_*, b_*}(y)-b_*\|_2^{2} + \|\varphi_{W_*}(y)\|_2^{2}\bigr)^{p/2} \, d\alpha_*(y) \\
&\le \max \{2^{\frac{p-2}{2}}, 1\}  \int_{y\in \mathbb{R}^n}\hspace*{-3ex} \|\varphi_{V_*,b_*}(y)-b_*\|_2^{p} + \|\varphi_{W_*}(y)\|_2^{p} \, d\alpha_*(y)\\
& =\max \{2^{\frac{p-2}{2}}, 1\}  \biggl( \int_{y\in \mathbb{R}^m} \|y-b_*\|_2^p\, d \mu(y)  \\
&\qquad\qquad\qquad\qquad\qquad + \int_{(x,y)\in \mathbb{R}^{2n}} \|\varphi_{W_*}(y)\|_2^{p} \, d\gamma_\p(x,y)\biggr) \\
&\le \max \{2^{\frac{3p-4)}{2}}, 1\}  \biggl( \int_{y\in \mathbb{R}^m} \|y\|_2^p\, d \mu(y) +\|b_*\|_2^p \\
&\qquad\qquad\qquad\qquad\qquad+ \int_{x\in\mathbb{R}^n} \|\varphi_{W_*}(x)\|_2^p\, d \nu(x)\biggr) \\
&\le \max \{2^{\frac{3p-4}{2}}, 1\}  \biggl( \int_{y\in \mathbb{R}^m} \|y\|_2^p\, d \mu(y) + \int_{x\in\mathbb{R}^n} \|x\|_2^p\, d \nu(x)\biggr).
\end{align*}
\endgroup
Some explanation is in order: In the first inequality we have used $(\sigma +\tau)^{p/2} \le \max\{2^{\frac{p-2}{2}},1\}\cdot(\sigma^{p/2} + \tau^{p/2})$; in the fourth equality we observe that the support of $\gamma_\p$ is contained in the subspace $\{(x,y) : \varphi_{W_*}(x) = \varphi_{W_*}(y)\}$; in the fifth inequality we have used $\|\sigma-\tau\|_2 \le \|\sigma\|_2 + \|\tau\|_2$ and $(\sigma+\tau)^p \le 2^{p-1}(\sigma^p+ \tau^p)$; and in the last inequality, $\|\varphi_{W_*}(x)\|_2 \le \|x\|_2$. Since the $p$th central moment of $\alpha_*$ is bounded by the $p$th central moments of $\mu$ and $\nu$, we have $\alpha_* \in \Mp(\mathbb{R}^n)$. Finally,
\begin{align*}
\W_p^p&(\alpha_*,\nu )\le\int_{\mathbb{R}^{2n}} \|x-y\|_2^p\, d\gamma_\p (x,y)\\
& = \int_{\mathbb{R}^{2n}} \|\varphi_{V_*,b_*}(x)-\varphi_{V_*,b_*}(y)\|_2^p\, d\gamma_\p (x,y)\\
& = \int_{\mathbb{R}^{n+m}} \|\varphi_{V_*,b_*}(x)-z\|_2^p\, d\tilde \gamma (x,z)\\
&=\int_{(y,z)\in\mathbb{R}^{2m}}\int_{\varphi_{V_*,b_*}^{-1}(y)}\|\varphi_{V_*,b_*}(x)-z\|_2^p \,d\nu_y(x)\, d \gamma(y,z)\\
&=\int_{\mathbb{R}^{2m}}\|y-z\|_2^p\, d\gamma(y,z)=\W_p^p(\mu, \beta_*).
\end{align*}
Note that the first relation is an inequality as $\gamma_\p $ may not be an optimal transport coupling between $\alpha_*$ and $\nu$; the next equality follows from the support of $\gamma_\p$ being contained in the subspace $\{(x,y) : \varphi_{W_*}(x) = \varphi_{W_*}(y)\}$; and the next-to-last equality comes from the definition of pushforward measure.

We next show that $\varphi_{V_*, b_*}(\alpha _*)=\mu$ under the projection $\varphi_{V_*, b_*}$, i.e., $\alpha _*\in \Em_p(\mu,n)$. For a measurable $S\subseteq \mathbb{R}^m$, 
\begin{align*}
    \varphi_{V_*, b_*}&(\alpha _*)(S) = \alpha_*\big(\varphi_{V_*, b_*}^{-1}(S)\big)\\
   &= \gamma_\p \big( \mathbb{R}^n\times \varphi_{V_*, b_*}^{-1}(S)\big) = \tilde \gamma(\mathbb{R}^n\times S) \\
    & = \int_{(y,z)\in\mathbb{R}^{2m}}\int_{\varphi_{V_*,b_*}^{-1}(y)} \mathbb{I}_{(x,z)\in \mathbb{R}^n\times S }\,d\nu_y(x)\, d \gamma(y,z)\\
    & = \int_{(y,z)\in\mathbb{R}^{2m}}\int_{\varphi_{V_*,b_*}^{-1}(y)} \mathbb{I}_{z\in S }\,d\nu_y(x)\, d \gamma(y,z) = \mu(S),
\end{align*}
as required. Observe that the first three equalities are all consequences of the definition of a pushforward measure.
Therefore, with Lemma~\ref{lem:projWasInequal}, we have $\W_p(\alpha_*, \nu) =  W_p(\mu, \beta_*)$. Hence  
\begin{align*}
\EW_p(\mu,\nu ) &=\inf_{\alpha \in \Em(\mu,n)}\W_p(\alpha ,\nu )\le \W_p(\alpha _*,\nu )\\
&= \W_p(\mu,\beta_*)\le \PW_p(\mu,\nu ) +\varepsilon .
\end{align*}
Since $\varepsilon > 0$ is arbitrary, $\PW_p(\mu,\nu )\ge \EW_p(\mu,\nu )$.
\end{proof}
We denote the common value in \eqref{eq:W} by $\widehat{\W}_p(\mu, \nu)$, and call it the \emph{augmented $p$-Wasserstein distance} between $\mu \in \M_p(\mathbb{R}^m)$ and $\nu \in \M_p(\mathbb{R}^n)$. Note that this is a distance in the sense of a distance from a point to a set; it is not a metric since if we take $ \mu ,\nu \in \Mp(\mathbb{R}^m)$ with $\nu $ a nontrivial rotation of $\mu $, we will have $\widehat{\W}_p(\mu ,\nu ) = 0$ even though $\mu \ne \nu$.

The augmented $p$-Wasserstein distance $\widehat{\W}_p$ preserves some properties of the $p$-Wasserstein metric $\W_p$, an example is the following inequality, which is known to hold for $\W_p$.
\begin{corollary}\label{cor:p<q}
Let $m,n \in \mathbb{N}$, $m \le n$. Let $p,q \in [1,\infty]$, $p \le q$. For any $\mu\in \M_q(\mathbb{R}^m)$ and $\nu \in \M_q(\mathbb{R}^n)$, we have
\[
\widehat{\W}_p(\mu, \nu) \le \widehat{\W}_q(\mu, \nu).
\]
\end{corollary}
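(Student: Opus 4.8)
The plan is to reduce everything to the classical monotonicity $\W_p(\sigma,\tau)\le\W_q(\sigma,\tau)$ for measures $\sigma,\tau$ of the \emph{same} dimension (a consequence of Jensen's inequality applied to the convex map $t\mapsto t^{q/p}$, with $\W_\infty$ read as an essential supremum when $q=\infty$), combined with the embedding--distance formula established in Theorem~\ref{thm:W}: for $r\in\{p,q\}$ we have $\widehat{\W}_r(\mu,\nu)=\EW_r(\mu,\nu)=\inf_{\alpha\in\Em_r(\mu,n)}\W_r(\alpha,\nu)$. Since $\mu\in\M_q(\mathbb{R}^m)\subseteq\M_p(\mathbb{R}^m)$ and $\nu\in\M_q(\mathbb{R}^n)\subseteq\M_p(\mathbb{R}^n)$, both $\widehat{\W}_p(\mu,\nu)$ and $\widehat{\W}_q(\mu,\nu)$ are well defined.

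The one structural fact I would record first is the inclusion $\Em_q(\mu,n)\subseteq\Em_p(\mu,n)$: any probability measure $\alpha$ with finite $q$th moment has finite $p$th moment because $p\le q$, and the defining relation $\varphi_{V,b}(\alpha)=\mu$ is unaffected; also $\Em_q(\mu,n)$ is nonempty, as Theorem~\ref{thm:W} identifies $\EW_q(\mu,\nu)$ with the finite quantity $\PW_q(\mu,\nu)$. Then the result is immediate from the chain
\begin{align*}
\widehat{\W}_p(\mu,\nu) &= \inf_{\alpha\in\Em_p(\mu,n)}\W_p(\alpha,\nu) \le \inf_{\alpha\in\Em_q(\mu,n)}\W_p(\alpha,\nu)\\
&\le \inf_{\alpha\in\Em_q(\mu,n)}\W_q(\alpha,\nu) = \widehat{\W}_q(\mu,\nu),
\end{align*}
where the first inequality holds because the infimum is over a smaller set, and the second is the same-dimension monotonicity applied to each fixed $\alpha\in\Em_q(\mu,n)$ (legitimate precisely because such $\alpha$ and $\nu$ both have finite $q$th moments).

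I do not expect a real obstacle here; the only point requiring care is that $\W_q(\alpha,\nu)$ is only defined when both arguments have finite $q$th moments, which is exactly why $\widehat{\W}_q$ must be characterized via $\Em_q(\mu,n)$ rather than $\Em_p(\mu,n)$, and why the hypothesis $\mu,\nu\in\M_q$ (not merely $\M_p$) is needed. One should also keep the $q=\infty$ case in mind, but $\W_p\le\W_\infty$ still holds there. An equivalent route uses the projection side, $\inf_{\beta\in\Pm(\nu,m)}\W_p(\mu,\beta)\le\inf_{\beta\in\Pm(\nu,m)}\W_q(\mu,\beta)$, where Lemma~\ref{lem:projWasInequal} guarantees $\Pm(\nu,m)\subseteq\M_q(\mathbb{R}^m)$ so the right-hand side makes sense; both arguments are equally short, so I would present the embedding one.
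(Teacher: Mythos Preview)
Your proof is correct and follows the same idea as the paper's: reduce to the classical same-dimension monotonicity $\W_p\le\W_q$ via one of the two characterizations of $\widehat{\W}_r$ from Theorem~\ref{thm:W}. The paper's one-line proof actually uses the projection side you mention at the end (where the index set $\Pm(\nu,m)$ is independent of $r$, so no inclusion $\Em_q\subseteq\Em_p$ is needed), but your embedding-side argument works equally well.
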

\begin{proof}
Follows from $\widehat{\W}_p(\mu, \nu) = \inf_{\beta \in \Pm(\nu, m)}\W_p(\mu, \beta) \le  \inf_{\beta \in \Pm(\nu, m)}\W_q(\mu, \beta) =\widehat{\W}_q(\mu, \nu)$.
\end{proof}

\section{$f$-Divergence}\label{sec:f}

The most useful notion of distance on probability densities is often not a metric. Divergences are in general asymmetric and do not satisfy the triangle inequality. The Kullback--Leibler divergence \cite{KL1,KL2} is probably the best known example, ubiquitous in information theory, machine learning, and statistics. It is used to characterize relative entropy in information systems \cite{Shannon1948Entropy}, to measure randomness in continuous time series \cite{von2009finding}, to quantify information gain in comparison of statistical models of inference \cite{Chaloner1995bayes}, among other things.

The KL-divergence is a special limiting case of a R\'enyi divergence \cite{Renyi1961divergence}, which is in turn a special case of a vast generalization called the $f$-divergence \cite{Csi}.
\begin{definition}\label{def:fdiv}
Let $\mu ,\nu \in \M(\Omega)$ and $\mu $ be absolutely continuous with respect to $\nu $. For any convex function $f:\mathbb{R}\to \mathbb{R}$ with $f(1)=0$, the \emph{$f$-divergence} of $\mu $ from $\nu$ is 
\[
\D_{f}(\mu \Vert \nu ) = \int_\Omega f\Bigl(\frac{d\mu }{d\nu }\Bigr)\, d\nu = \int_\Omega f\bigl(g(x)\bigr)\, d\nu (x),
\]
with $g$  the Radon--Nikodym derivative $d\mu (x)= g(x)\, d\nu (x)$.
\end{definition}
Aside from the R\'enyi divergence, the $f$-divergence includes just about every known  divergences as special cases. These include the Pearson chi-squared \cite{pearson1900x}, Hellinger squared \cite{Hellinger1909distance},  Chernoff \cite{Chernoff1952div}, Jeffreys \cite{Jeffreys1961div}, alpha--beta \cite{eguchi1985differential}, Jensen--Shannon \cite{JS,nielsen2010family}, and exponential \cite{Exponential} divergences, as well as the total variation metric. For easy reference, we provide a list in Table~\ref{tab:f}. 
Note that taking limit as $\theta \to 1$ in the R\'enyi divergence gives us the KL-divergence.
\begin{table*}[!t]
\centering
\caption{In the following, $\zeta = (1-\theta)\mu+\theta\nu$, $\eta = (1-\theta)\nu + \theta \mu$, and $\theta,\phi \in (0,1)$.}
\footnotesize
\tabulinesep=0.75ex
\begin{tabu}{@{}lll}
& $f(t)$ & $\D_f(\mu \Vert \nu)$ \\
Kullback--Liebler & $ t\log t$ & $\displaystyle\int_\Omega \log \Bigl(\dfrac{d\mu }{d\nu }\Bigr)d\mu$ \\
Exponential & $ t\log^2 t$ & $\displaystyle\int_\Omega \log^2\Bigl(\dfrac{d\mu}{d\nu}\Bigr) d\mu$ \\
Pearson & $ (t-1)^2$ & $\displaystyle \int_\Omega \Bigl(\dfrac{d\mu }{d\nu }-1 \Bigr)^2  d\nu$ \\             
Hellinger  & $ (\sqrt{t}-1)^2$ & $\displaystyle\int_\Omega \Bigl[ \Bigl(\dfrac{d\mu}{d\nu}\Bigr)^{1/2} - 1\Bigr]^2  d\nu$ \\
Jeffreys & $ (t-1)\log t$ & $\displaystyle\int_\Omega \Bigl(\dfrac{d\mu}{d\nu} - 1\Bigr)\log\Bigl(\dfrac{d\mu}{d\nu}\Bigr)  d\nu$ \\
R\'enyi & $ \dfrac{t^\theta -t}{\theta(\theta-1)}$ & $\dfrac{1}{\theta(\theta-1)}\displaystyle\int_\Omega \Bigl[\Bigl(\dfrac{d\mu}{d\nu}\Bigr)^\theta - \dfrac{d\mu}{d\nu} \Bigr] d\nu$ \\
Chernoff & $ \dfrac{4(1-t^{(1+\theta)/2})}{1-\theta^2}$ & $\dfrac{4}{1-\theta^2}\displaystyle\int_\Omega \Bigl[1-\Bigl(\dfrac{d\mu}{d\nu} \Bigr)^{(1+\theta)/2}\Bigr]  d\nu$ \\
alpha-beta & $\dfrac{2(1-t^{(1-\theta)/2})(1-t^{(1-\phi)/2})}{(1-\theta)(1-\phi)}$ & $\dfrac{2}{(1-\theta)(1-\phi)}\displaystyle\int_\Omega \Bigl[1-\Bigl( \dfrac{d\mu}{d\nu}\Bigr)^{(1-\theta)/2}\Bigr]\Bigl[1-\Bigl(\dfrac{d\mu}{d\nu}\Bigr)^{(1-\phi)/2}\Bigr]  d\nu$\\
Jensen--Shannon & $\dfrac{t}{2}\log \dfrac{t}{(1-\theta)t+\theta} + \dfrac{1}{2}\log \dfrac{1}{1-\theta+\theta t}$ & $\displaystyle \dfrac{1}{2}\int_\Omega \log \Bigl(\dfrac{d\mu }{d\zeta }\Bigr)d\mu + \dfrac{1}{2} \int_\Omega \log \Bigl(\dfrac{d\nu }{d\eta }\Bigr)d\nu$ \\
total variation & $ \dfrac{|t-1|}{2}$ & $\sup_{A\in \Sigma(\Omega)} |\mu (A)-\nu (A)|$
\end{tabu}
\label{tab:f}
\end{table*}

These divergences are all useful in their own right. The Pearson chi-squared divergence is used in statistical test of categorical data to quantify the difference between two distributions \cite{pearson1900x}. The Hellinger squared divergence is used in dimension reduction for multivariate data \cite{Rao1995Hellinger}. The Jeffreys divergence is used in Markov random field for image classification \cite{nishii2006image}. The Chernoff divergence is used in image feature classification, indexing, and retrieval \cite{hero2001alpha}. The R\'enyi divergence is used in quantum information theory as a measure of entanglement \cite{hastings2010measuring}. The alpha--beta divergence is used in geometrical analyses of parametric inference \cite{Amari1987Diff}. We will defer discussions of Jensen--Shannon divergence and total variation metric in Sections~\ref{sec:js} and \ref{sec:tv} respectively.

By definition, an $f$-divergence $\D_{f}(\mu \Vert \nu )$ is only defined if $\mu$ is absolutely continuous with respect to $\nu$. For convenience, in this section we will restrict our attention to probability measures with densities so that we do not have to keep track of which measure is absolutely continuous to which other measure.
Let $\lambda^n$ be the Lebesgue measure restricted to $\Omega \subseteq \mathbb{R}^n$. With respect to $\lambda^n$, we define
\begin{align*}
\M_d(\Omega) &\coloneqq \{\mu \in \M(\Omega) : \mu \text{ has density} \},\\
\M_{pd}(\Omega)&\coloneqq \{\mu \in \M_d(\Omega) : \mu \text{ has strictly positive density} \}.
\end{align*}
Note that $\mu \in \M_d(\Omega)$ iff it is absolutely continuous with respect to $\lambda^n$.
The following lemma guarantees the existence of  projection and embedding $f$-divergences, to be defined later. 
\begin{lemma}\label{lem:f-div1}
Let $m,n \in \mathbb{N}$, $m\le n$, and $\Pm(\nu ,m)$ be as in Definition~\ref{def:proemb}.
\begin{enumerate}[\upshape (i)]
\item\label{it:a} If $\nu \in \M_{pd}(\mathbb{R}^n)$, then $\Pm(\nu ,m)\subseteq \M_{pd}(\mathbb{R}^m)$.
\item\label{it:b} If $\alpha \in \M_d(\mathbb{R}^n)$ and $\nu \in \M_{pd}(\mathbb{R}^n)$, then $\alpha$ is absolutely continuous with respect to $\nu $.
\end{enumerate}
\end{lemma}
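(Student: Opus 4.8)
The plan is to prove the two parts separately, reducing each to elementary facts about how orthogonal maps and marginalization interact with Lebesgue measure; the whole lemma is essentially bookkeeping except for one point noted at the end.

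For part~\ref{it:a}, fix $\beta \in \Pm(\nu,m)$, say $\beta = \varphi_{V,b}(\nu)$ with $V \in \O(m,n)$ and $b \in \mathbb{R}^m$, and complete the rows of $V$ to an orthogonal matrix by choosing $W \in \O(n-m,n)$ with $Q \coloneqq \begin{bsmallmatrix} V\\ W\end{bsmallmatrix} \in \O(n)$. The key identity is $\varphi_{V,b}^{-1}(A) = \varphi_Q^{-1}\bigl((A-b)\times\mathbb{R}^{n-m}\bigr)$ for any $A \subseteq \mathbb{R}^m$, together with the fact that $\varphi_Q$ preserves $\lambda^n$ (as $|\det Q|=1$); since a $\lambda^m$-null set crossed with $\mathbb{R}^{n-m}$ is $\lambda^n$-null while a set of positive $\lambda^m$-measure crossed with $\mathbb{R}^{n-m}$ has positive $\lambda^n$-measure, it follows that $\varphi_{V,b}^{-1}(A)$ is $\lambda^n$-null when $\lambda^m(A)=0$ and has positive $\lambda^n$-measure when $\lambda^m(A)>0$. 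From the first case, $\beta(A)=\nu\bigl(\varphi_{V,b}^{-1}(A)\bigr)=0$ whenever $\lambda^m(A)=0$, since $\nu\in\M_d(\mathbb{R}^n)$, so $\beta\in\M_d(\mathbb{R}^m)$. To upgrade to strict positivity I would argue by contradiction: if the density of $\beta$ vanished on some Borel $A$ with $\lambda^m(A)>0$, then $\beta(A)=0$, whereas $\varphi_{V,b}^{-1}(A)$ has positive $\lambda^n$-measure and $\nu$ has a strictly positive density, forcing $\nu\bigl(\varphi_{V,b}^{-1}(A)\bigr)>0$ --- contradicting $\beta(A)=\nu\bigl(\varphi_{V,b}^{-1}(A)\bigr)$. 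Equivalently, Tonelli gives the density of $\beta$ explicitly as $u\mapsto\int_{\mathbb{R}^{n-m}}q\bigl(V^\tp(u-b)+W^\tp v\bigr)\,dv$, with $q$ the density of $\nu$, and the integral of an a.e.-positive function is positive.

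Part~\ref{it:b} is the standard chain of absolute continuity. Writing $q \coloneqq d\nu/d\lambda^n$, which is strictly positive $\lambda^n$-a.e. by hypothesis, any $\nu$-null set $A$ satisfies $\int_A q\,d\lambda^n = 0$ and hence $\lambda^n(A)=0$; since $\alpha \in \M_d(\mathbb{R}^n)$ means $\alpha \ll \lambda^n$, it follows that $\alpha(A)=0$, i.e., $\alpha \ll \nu$.

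The only step that is not pure bookkeeping is the strict-positivity claim in part~\ref{it:a}: one must ensure that marginalizing $\nu$ over the $n-m$ complementary coordinates does not destroy positivity of the density, and this is precisely what the measure computation for $\varphi_{V,b}^{-1}(A)$ (or, in the explicit-density version, the positivity of the integral of a positive function) is there to guarantee. Everything else is routine.
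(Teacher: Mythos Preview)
Your proof is correct and follows essentially the same approach as the paper: complete $V$ to an orthogonal matrix, use that orthogonal maps preserve Lebesgue measure, and reduce to marginalization/Tonelli for part~\ref{it:a}, and to the density ratio for part~\ref{it:b}. If anything, you are more explicit than the paper about the strict-positivity step in part~\ref{it:a}; the paper simply writes down the marginal density $t'(y)=\int_{\varphi_{V,b}^{-1}(y)}t(x)\,d\lambda^{n-m}(\varphi_W(x))$ and leaves its positivity implicit, whereas you spell out the contradiction argument (and the equivalent Tonelli formulation).
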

\begin{proof}
Let $\beta\in \Pm(\nu,m)$ and let $V \in \O(m,n)$, $b\in \mathbb{R}^m$ be such that $\varphi_{V,b}(\nu)=\beta$. Let $d\nu(x)=t(x)\, d\lambda^n(x)$  and $W \in \O(n-m,n)$ be such that $\begin{bsmallmatrix} V\\ W\end{bsmallmatrix} \in \O(n)$. For any measurable $g: \mathbb{R}^m \to \mathbb{R}$,
\begin{align*}
\int_{y\in \mathbb{R}^m}g(y)\,d\beta(y) &= \int_{x\in \mathbb{R}^n} g(\varphi_{V,b}(x))\,d\nu(x) \\
&= \int_{x\in \mathbb{R}^n} g(\varphi_{V,b}(x))t(x)\,d\lambda^n(x)\\
&= \int_{y\in \mathbb{R}^m} g(y)t'(y)\,d\lambda^m(y),
\end{align*}
where $t'(y) =\int_{\varphi_{V,b}^{-1}(y)}t(x)\,d\lambda^{n-m}\big(\varphi_W(x)\big)$. This is because $\varphi_{V,b}$ is an orthogonal projection plus a translation and for any measurable function $f$,
\begin{multline*}
\int_{x\in \mathbb{R}^n} f(x)\, d\lambda^n(x)\\
= \int_{y\in \mathbb{R}^m} \int_{\varphi_{V,b}^{-1}(y)} f(x)\,d\lambda^{n-m}(\varphi_W(x))\,d\lambda^m(y),
\end{multline*}
where the existence of $\varphi_W$ follows from Theorem~\ref{thm:disin}. Hence  $d\beta(y) = t'(y)\, d\lambda^m(y)$ and we have \eqref{it:a}.
For \eqref{it:b}, suppose $d \alpha(x)=t_\alpha(x)\,d\lambda^n(x)$ and $d\nu(x)=t_\nu(x)\,d\lambda^n(x)$ with $t_\nu(x)>0$, then
\[
d\alpha(x)=\frac{t_\alpha(x)}{t_\nu(x)}\,d\nu(x). \qedhere
\]
\end{proof}

We deduce an $f$-divergence analogue of Lemma~\ref{lem:projWasInequal}.
\begin{lemma}\label{lem:ProjFdivInequal}
Let $m,n \in \mathbb{N}$, $m\le n$, and $f:\mathbb{R} \to \mathbb{R}$ be convex with $f(1)=0$. Let $\alpha \in \M_d(\mathbb{R}^n)$, $\nu \in \M_{pd}(\mathbb{R}^n)$, $V \in \O(m,n)$, and $b\in \mathbb{R}^m$.
Then 
\[
\D_{f}(\alpha \Vert \nu ) \ge \D_{f}\bigl(\varphi_{V,b}(\alpha)\Vert \varphi_{V,b}(\nu )\bigr).
\]
\end{lemma}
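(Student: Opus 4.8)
The plan is to mimic the data-processing inequality for $f$-divergences, with the pushforward $\varphi_{V,b}$ playing the role of the Markov kernel. The key point is that $\varphi_{V,b}$ sends both $\alpha$ and $\nu$ to lower-dimensional measures, and by Lemma~\ref{lem:f-div1} part~\eqref{it:b} the hypotheses $\alpha\in\M_d(\mathbb R^n)$, $\nu\in\M_{pd}(\mathbb R^n)$ guarantee that $\alpha\ll\nu$, so $\D_f(\alpha\Vert\nu)$ is well-defined; moreover Lemma~\ref{lem:f-div1}\eqref{it:a} gives $\varphi_{V,b}(\nu)\in\M_{pd}(\mathbb R^m)$ and the same disintegration argument shows $\varphi_{V,b}(\alpha)\in\M_d(\mathbb R^m)$ with $\varphi_{V,b}(\alpha)\ll\varphi_{V,b}(\nu)$, so the right-hand side is also well-defined.

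First I would set up the disintegration. Let $W\in\O(n-m,n)$ complete $V$ to an element of $\O(n)$, write $d\alpha(x)=a(x)\,d\lambda^n(x)$ and $d\nu(x)=t(x)\,d\lambda^n(x)$ with $t>0$, and let $g=a/t$ be the Radon--Nikodym derivative $d\alpha/d\nu$. As in the proof of Lemma~\ref{lem:f-div1}, the densities of the pushforwards with respect to $\lambda^m$ are the fiber integrals
\[
a'(y)=\int_{\varphi_{V,b}^{-1}(y)} a(x)\,d\lambda^{n-m}(\varphi_W(x)),\qquad
t'(y)=\int_{\varphi_{V,b}^{-1}(y)} t(x)\,d\lambda^{n-m}(\varphi_W(x)),
\]
so that the Radon--Nikodym derivative $d\varphi_{V,b}(\alpha)/d\varphi_{V,b}(\nu)$ at $y$ equals $a'(y)/t'(y)$. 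Since $d\nu(x)=t(x)\,d\lambda^n(x)$ disintegrates over $\varphi_{V,b}$ as a probability measure $\nu_y$ on each fiber with $d\nu_y(x)=\bigl(t(x)/t'(y)\bigr)\,d\lambda^{n-m}(\varphi_W(x))$ and $d\varphi_{V,b}(\nu)(y)=t'(y)\,d\lambda^m(y)$, the ratio $a'(y)/t'(y)$ is exactly the conditional expectation of $g$ over the fiber:
\[
\frac{a'(y)}{t'(y)}=\int_{\varphi_{V,b}^{-1}(y)} g(x)\,d\nu_y(x).
\]

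The heart of the argument is then Jensen's inequality applied fiberwise, using convexity of $f$:
\[
f\!\left(\frac{a'(y)}{t'(y)}\right)
= f\!\left(\int_{\varphi_{V,b}^{-1}(y)} g\,d\nu_y\right)
\le \int_{\varphi_{V,b}^{-1}(y)} f\bigl(g(x)\bigr)\,d\nu_y(x).
\]
Integrating both sides against $d\varphi_{V,b}(\nu)(y)=t'(y)\,d\lambda^m(y)$ and using Theorem~\ref{thm:disin}\,(iii) to reassemble the iterated integral on the right back into $\int_{\mathbb R^n} f(g(x))\,d\nu(x)$ gives
\[
\D_f\bigl(\varphi_{V,b}(\alpha)\Vert\varphi_{V,b}(\nu)\bigr)
=\int_{\mathbb R^m} f\!\left(\tfrac{a'}{t'}\right)d\varphi_{V,b}(\nu)
\le\int_{\mathbb R^n} f\bigl(g(x)\bigr)\,d\nu(x)=\D_f(\alpha\Vert\nu),
\]
which is the claim. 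The main obstacle I expect is purely the measure-theoretic bookkeeping of the disintegration: making precise that the fiber measures $\nu_y$ are genuine probability measures, that the iterated integral identity of Theorem~\ref{thm:disin} applies with the (possibly sign-indefinite, but here bounded-below since $f$ is convex hence bounded below by an affine function with $f(1)=0$) integrand $f\circ g$, and handling the mild integrability subtlety that $f\circ g$ need not be nonnegative — this is resolved by writing $f(t)\ge f(1)+c(t-1)=c(t-1)$ for a subgradient $c$ of $f$ at $1$ and applying Theorem~\ref{thm:disin}\,(iii) to the nonnegative function $f\circ g - c(g-1)$, noting that the affine correction integrates to $0$ on both sides. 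Everything else is the standard convexity-plus-Jensen template, identical in spirit to the data-processing inequality.
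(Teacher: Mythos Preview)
Your proposal is correct and follows essentially the same route as the paper: disintegrate $\nu$ along $\varphi_{V,b}$, identify the Radon--Nikodym derivative of the pushforwards as the fiberwise $\nu_y$-average of $g=d\alpha/d\nu$, and apply Jensen's inequality fiberwise before integrating against $\varphi_{V,b}(\nu)$. The only difference is cosmetic---you work with Lebesgue densities $a,t$ and their fiber integrals $a',t'$, whereas the paper works directly with $t=d\alpha/d\nu$ and $t'=d\mu/d\beta$---and you are slightly more careful than the paper about the sign/integrability issue in invoking Theorem~\ref{thm:disin}(iii), which the paper leaves implicit.
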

\begin{proof}
Let $\mu=\varphi_{V,b}(\alpha)$ and $\beta=\varphi_{V,b}(\nu)$ with $d\alpha(x) = t(x)\, d\nu (x)$. By Theorem~\ref{thm:disin}, for any measurable function $g$,
\begin{align*}
\int_{\mathbb{R}^n} g(x)\,d\alpha(x) &= \int_{\mathbb{R}^m}\int_{\varphi_{V,b}^{-1}(y)}g(x)\,d\alpha_{y}(x) \,d\mu(y), \\
\int_{\mathbb{R}^n} g(x)\,d\nu(x) &= \int_{\mathbb{R}^m}\int_{\varphi_{V,b}^{-1}(y)}g(x)\,d \nu_{y}(x) \,d\beta(y).
\end{align*}
By Lemma~\ref{lem:f-div1}, we have $d\mu(y)= t'(y)\, d\beta(y)$ with $t'(y) =\int_{\varphi_{V,b}^{-1}(y)}t(x) \,d\nu_y(x)$. By Definition~\ref{def:fdiv} and  Jensen inequality,
\begin{align*}
\D_{f}(\alpha \Vert v)&= \int_{x\in \mathbb{R}^n} f\bigl(t(x)\bigr)\,d\nu(x) \\
&= \int_{y\in \mathbb{R}^m} \biggl[\int_{\varphi_{V,b}^{-1}(y)} f\bigl(t(x)\bigr)\,d\nu_y(x)\biggr] d\beta(y)\\ 
& \ge \int_{y\in \mathbb{R}^m}f\biggl[\int_{\varphi_{V,b}^{-1}(y)}t(x)\,d\nu_{y}(x)\biggr] d\beta(y) \\
&= \int_{y\in \mathbb{R}^m} f\bigl(t'(y)\bigr)\,d\beta(y) = \D_{f}(\mu\Vert \beta). \qedhere
\end{align*}
\end{proof}

Lemma~\ref{lem:f-div1} assures that $\Pm(\nu ,m)\subseteq \M_d(\mathbb{R}^m)$ but in general, it will not be true that $\Em(\mu, n) \subseteq \M_d(\mathbb{R}^n)$. As such we introduce the following subset:
\begin{multline*}
\Em_d(\mu, n) \coloneqq \{\alpha \in \M_d(\mathbb{R}^n): \varphi_{V,b}(\alpha )=\mu\\
 \text{for some } V \in \O(m,n),\; b\in \mathbb{R}^m\}
\end{multline*}
and with this, we establish Theorem~\ref{thm:main} for $f$-divergence.
\begin{theorem}\label{thm:f}
Let $m,n \in \mathbb{N}$ and $m \le n$. 
For $\mu\in \M(\mathbb{R}^m)$ and $\nu \in \M(\mathbb{R}^n)$, let
\begin{align*}
\PD_f(\mu\Vert\nu )&\coloneqq\inf_{\beta \in \Pm(\nu ,m)}\D_f(\mu\Vert \beta),\\
\ED_f(\mu\Vert\nu )&\coloneqq\inf_{\alpha \in \Em_d(\mu,n)}\D_f(\alpha \Vert\nu ).
\end{align*}
Then
\begin{equation}\label{eq:f}
\PD_f(\mu \Vert\nu ) = \ED_f(\mu \Vert\nu ).
\end{equation}
\end{theorem}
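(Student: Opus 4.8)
The plan is to mirror the structure of the proof of Theorem~\ref{thm:W}: establish the easy inequality $\PD_f(\mu\Vert\nu) \le \ED_f(\mu\Vert\nu)$ directly from Lemma~\ref{lem:ProjFdivInequal}, and then do the real work to prove $\PD_f(\mu\Vert\nu) \ge \ED_f(\mu\Vert\nu)$ by a disintegration-and-gluing construction. For the easy direction: given any $\alpha \in \Em_d(\mu,n)$ with $\varphi_{V_\alpha,b_\alpha}(\alpha) = \mu$, Lemma~\ref{lem:ProjFdivInequal} gives $\D_f(\alpha\Vert\nu) \ge \D_f\bigl(\mu \Vert \varphi_{V_\alpha,b_\alpha}(\nu)\bigr)$, and since $\varphi_{V_\alpha,b_\alpha}(\nu) \in \Pm(\nu,m)$, taking infima over $\Em_d(\mu,n)$ on the left dominates the infimum over $\Pm(\nu,m)$ on the right. (One should note the measurability/absolute-continuity hypotheses: if $\nu \notin \M_{pd}(\mathbb{R}^n)$ one restricts to the relevant support, or observes that both sides are $+\infty$ unless $\mu \ll$ the relevant projection; this bookkeeping about densities is exactly why the theorem is stated separately from the Wasserstein case.)

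For the hard direction, fix $\varepsilon > 0$ and choose $\beta_* \in \Pm(\nu,m)$ with $\D_f(\mu\Vert\beta_*) \le \PD_f(\mu\Vert\nu) + \varepsilon$; let $V_* \in \O(m,n)$, $b_* \in \mathbb{R}^m$ realize $\varphi_{V_*,b_*}(\nu) = \beta_*$, and pick $W_* \in \O(n-m,n)$ completing $\begin{bsmallmatrix}V_*\\W_*\end{bsmallmatrix}$ to an element of $\O(n)$. Apply the Disintegration Theorem to $\varphi_{V_*,b_*}$ to get $\{\nu_y : y \in \mathbb{R}^m\}$ disintegrating $\nu$ over $\beta_*$. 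The idea is that the ``missing'' $(n-m)$ coordinates of $\alpha_*$ should be filled in using exactly the conditional measures $\nu_y$, while the $m$ ``visible'' coordinates should carry $\mu$. Concretely I would define $\alpha_*$ by its density: writing a point of $\mathbb{R}^n$ via the splitting $x \mapsto (\varphi_{V_*,b_*}(x), \varphi_{W_*}(x)) \in \mathbb{R}^m \times \mathbb{R}^{n-m}$, set $d\alpha_*(x) = d\nu_{\varphi_{V_*,b_*}(x)}(x) \cdot d\mu(\varphi_{V_*,b_*}(x))$ — i.e.\ replace the marginal $\beta_*$ in the disintegration of $\nu$ by $\mu$, keeping the same conditionals. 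Then $\alpha_* \in \M_d(\mathbb{R}^n)$ (since $\nu \in \M_{pd}$ forces the conditionals to have densities, by an argument parallel to Lemma~\ref{lem:f-div1}) and $\varphi_{V_*,b_*}(\alpha_*) = \mu$ by construction, so $\alpha_* \in \Em_d(\mu,n)$.

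It then remains to show $\D_f(\alpha_*\Vert\nu) \le \D_f(\mu\Vert\beta_*)$, after which $\ED_f(\mu\Vert\nu) \le \D_f(\alpha_*\Vert\nu) \le \PD_f(\mu\Vert\nu) + \varepsilon$ and letting $\varepsilon \to 0$ finishes the proof (combined with Lemma~\ref{lem:ProjFdivInequal}, which also yields equality $\D_f(\alpha_*\Vert\nu) = \D_f(\mu\Vert\beta_*)$). For this, compute the Radon--Nikodym derivative: $d\alpha_*(x)/d\nu(x) = d\mu(\varphi_{V_*,b_*}(x))/d\beta_*(\varphi_{V_*,b_*}(x)) = g(\varphi_{V_*,b_*}(x))$ where $g = d\mu/d\beta_*$, since the conditional factors $d\nu_{\varphi_{V_*,b_*}(x)}(x)$ cancel. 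Hence by Definition~\ref{def:fdiv} and part~(iii) of the Disintegration Theorem applied to $\varphi_{V_*,b_*}$,
\[
\D_f(\alpha_*\Vert\nu) = \int_{\mathbb{R}^n} f\bigl(g(\varphi_{V_*,b_*}(x))\bigr)\, d\nu(x) = \int_{\mathbb{R}^m}\!\!\int_{\varphi_{V_*,b_*}^{-1}(y)} f\bigl(g(y)\bigr)\, d\nu_y(x)\, d\beta_*(y) = \int_{\mathbb{R}^m} f\bigl(g(y)\bigr)\, d\beta_*(y) = \D_f(\mu\Vert\beta_*),
\]
using $\nu_y(\varphi_{V_*,b_*}^{-1}(y)) = 1$ in the last-but-one step. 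I expect the main obstacle to be the careful verification that the disintegration conditionals $\nu_y$ genuinely have densities on the fibers $\varphi_{V_*,b_*}^{-1}(y) \cong \mathbb{R}^{n-m}$ so that $\alpha_*$ lands in $\M_d(\mathbb{R}^n)$ rather than merely $\M(\mathbb{R}^n)$, and the associated measure-theoretic bookkeeping of identifying fibers with $\mathbb{R}^{n-m}$ via $\varphi_{W_*}$ — this is the analogue of the density gymnastics in Lemma~\ref{lem:f-div1} and is where the $\M_{pd}$ hypothesis on $\nu$ is doing its work. The cancellation of conditional factors in the Radon--Nikodym derivative and the Disintegration identity are then essentially routine.
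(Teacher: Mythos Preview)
Your proposal is correct and follows essentially the same route as the paper's proof: the easy inequality via Lemma~\ref{lem:ProjFdivInequal}, then for the reverse inequality the construction of $\alpha_*$ by keeping the disintegration conditionals $\{\nu_y\}$ of $\nu$ but swapping the marginal $\beta_*$ for $\mu$, followed by the observation that $d\alpha_*/d\nu = (d\mu/d\beta_*)\circ\varphi_{V_*,b_*}$ so that $\D_f(\alpha_*\Vert\nu) = \D_f(\mu\Vert\beta_*)$ exactly. Your identification of the only delicate point---checking $\alpha_*\in\M_d(\mathbb{R}^n)$ via the density of the conditionals, which is where $\nu\in\M_{pd}(\mathbb{R}^n)$ is used---matches the paper's treatment as well.
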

\begin{proof}
Again, $\PD_f(\mu\Vert \nu )\le \ED_f(\mu \Vert \nu )$ is easy: For any $\alpha \in \Em_d(\mu, n)$,  there exist $V_\alpha \in \O(m,n)$ and $b_\alpha \in \mathbb{R}^m$ with $\varphi_{V_\alpha,b_\alpha }(\alpha )=\mu$.
It follows from Lemma~\ref{lem:ProjFdivInequal} that $\D_f(\alpha \Vert \nu) \ge \D_f\bigl(\mu \Vert \varphi_{V_\alpha, b_\alpha}(\nu)\bigr)$ and thus
\begin{align*}
\inf_{\alpha \in \Em_d(\mu, n)}\D_{f}(\alpha \Vert \nu ) &\ge \inf_{\alpha \in \Em_d(\mu, n)}\D_{f}\bigl(\mu \Vert \varphi_{V_\alpha,b_\alpha }(\nu )\bigr)\\
&\ge \inf_{V \in \O(m,n),\;b\in \mathbb{R}^m}\D_{f}\bigl(\mu \Vert \varphi_{V,b}(\nu )\bigr).
\end{align*}
It remains to show $\PD_f(\mu\Vert\nu )\ge \ED_f(\mu\Vert\nu )$. By the definition of $\PD_f(\mu \Vert \nu)$, for any $\varepsilon > 0$, there exists $\beta_* \in \Pm(\nu, m)$ with
\[
\PD_{f}(\mu \Vert \nu )\le \D_{f}(\mu \Vert \beta_*)\le \PD_{f}(\mu \Vert \nu )+\varepsilon.
\]
Let $V_*\in \O(m,n)$ and $b_*\in \mathbb{R}^m$ be such that $\varphi_{V_*,b_*}(\nu) = \beta_*$ and $W_* \in \O(n-m,n)$ be such that $\begin{bsmallmatrix} V_*\\ W_*\end{bsmallmatrix} \in \O(n)$.
Applying Theorem~\ref{thm:disin} to $\varphi_{V_*,b_*}$, we obtain a family of measures $\{\nu _y \in \M(\mathbb{R}^n): y\in \mathbb{R}^m\}$ such that for any measurable function $f$,
\[
\int_{\mathbb{R}^n}f(x)\, d\nu (x) = \int_{\mathbb{R}^m} \int_{\varphi_{V_*,b_*}^{-1}(y)} f(x)\, d\nu_y(x)\, d \beta_*(y).
\]
Define $\alpha _* \in \M(\mathbb{R}^n)$ by
\[
\alpha_*(S) = \int_{\mathbb{R}^n}  \mathbb{I}_{x\in S} d\alpha _*(x) = \int_{\mathbb{R}^m} \int_{\varphi_{V_*, b_*}^{-1}(y)}\hspace*{-2ex} \mathbb{I}_{x\in S}\, d\nu_y(x)\, d \mu(y)
\]
for any measurable set $S\subseteq \mathbb{R}^n$.
Since $\nu \in \M_{pd}(\mathbb{R}^n)$, we may identify $\{\nu_y \in \M(\mathbb{R}^n): y\in \mathbb{R}^m\}$ as a subset of $\M_d(\mathbb{R}^{n-m})$. Let $d\nu_y(x) = s_y(x)\,d\lambda^{n-m}(\varphi_{W_*}(x))$ and  $d\mu(y)= g(y)\,d\lambda^m(y)$. Then 
\begin{align*}
d\alpha _*(x) &= g\bigl(\varphi_{V_*,b_*}(x)\bigr)s_{\varphi_{V_*,b_*}(x)}(x)\\
&\qquad\qquad d\lambda^{n-m}(\varphi_{W_*}(x)) d\lambda^{m}(\varphi_{V_*,b_*}(x))\\
&= g\bigl(\varphi_{V_*,b_*}(x)\bigr)s_{\varphi_{V_*,b_*}(x)}(x)\,d\lambda^n(x).
\end{align*}
Hence we deduce that $\alpha _*\in \M_d(\mathbb{R}^n)$. We may also check that $\varphi_{V_*,b_*}(\alpha _*)=\mu$. Let $d\mu(y)=t(y)\,d\beta_*(y)$. Then
\[
d\alpha_*(x) = t\bigl(\varphi_{V_*,b_*}(x)\bigr)\,d\nu (x).
\]
Finally, by Definition~\ref{def:fdiv}, we have
\begin{align*}
\ED_{f}(\mu\Vert \nu ) &\le \D_{f}(\alpha _*\Vert \nu )\\
&= \int_{\mathbb{R}^n} f\bigl(t(\varphi_{V_*,b_*}(x))\bigr)\,d\nu (x)\\
&= \int_{\mathbb{R}^m} f\bigl(t(y)\bigr)\,d\beta_*(y)\\
&= \D_{f}(\mu\Vert \beta_*) \le \PD_{f}(\mu\Vert \nu )+\varepsilon.
\end{align*}
Since $\varepsilon > 0$ is arbitrary, we have $\ED_{f}(\mu \Vert \nu )\le \PD_{f}(\mu \Vert \nu )$.
\end{proof}
As in the case of Wasserstein distance, we denote the common value in \eqref{eq:f} by $\widehat{\D}_f(\mu \Vert \nu)$ and call it the \emph{augmented} $f$-divergence, and likewise for all specific $f$-divergences.

Surprisingly, certain relations between these distances remain true with our extension to probability densities of different dimensions. For example, Pinker's inequality \cite{csiszar2011information} between the total variation metric $d_{\tv}$ and KL-divergence $\D_{\kl}$ holds for the augmented total variation distance $\widehat{d}_{\tv}$ and augmented KL-divergence $\widehat{\D}_{\kl}$; another standard relation between the Hellinger squared divergence $\D_{\hl}$ and total variation metric is preserved for their augmented counterparts too.
\begin{corollary}[Pinker's inequality for probability measures of different dimensions]\label{cor:pink}
Let $m,n \in \mathbb{N}$, $m \le n$. For any $\mu\in \M_d(\mathbb{R}^m)$ and $\nu \in \M_{pd}(\mathbb{R}^n)$, we have 
\[
\widehat{d}_{\tv}(\mu, \nu )\le \sqrt{\frac{1}{2} \widehat{\D}_{\kl}(\mu \Vert \nu )}.
\]
\end{corollary}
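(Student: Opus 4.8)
The plan is to reduce this to the classical Pinsker inequality by exploiting the fact that, by Definition~\ref{def:proemb} together with Theorems~\ref{thm:f} and \ref{thm:tv}, both sides are infima of the \emph{same} base quantities over the \emph{same} index set:
\[
\widehat{\D}_{\kl}(\mu\Vert\nu) = \PD_{\kl}(\mu\Vert\nu) = \inf_{\beta\in\Pm(\nu,m)}\D_{\kl}(\mu\Vert\beta), \qquad \widehat{d}_{\tv}(\mu,\nu) = \inf_{\beta\in\Pm(\nu,m)}d_{\tv}(\mu,\beta).
\]
Hence any pointwise inequality between $d_{\tv}$ and $\D_{\kl}$ valid on $\M_d(\mathbb{R}^m)\times\M_{pd}(\mathbb{R}^m)$ descends automatically to the augmented versions.

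First I would check that the infimum on the right is over a legitimate family of KL-divergences: since $\nu\in\M_{pd}(\mathbb{R}^n)$, Lemma~\ref{lem:f-div1} gives $\Pm(\nu,m)\subseteq\M_{pd}(\mathbb{R}^m)$, and for every $\beta\in\Pm(\nu,m)$ the hypothesis $\mu\in\M_d(\mathbb{R}^m)$ forces $\mu$ to be absolutely continuous with respect to $\beta$, so each $\D_{\kl}(\mu\Vert\beta)$ is well-defined. If $\widehat{\D}_{\kl}(\mu\Vert\nu)=\infty$ there is nothing to prove, so assume it is finite. Given $\varepsilon>0$, pick $\beta_*\in\Pm(\nu,m)$ with $\D_{\kl}(\mu\Vert\beta_*)\le\widehat{\D}_{\kl}(\mu\Vert\nu)+\varepsilon$. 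Now $\mu$ and $\beta_*$ are probability measures on the \emph{same} space $\mathbb{R}^m$ with $\mu$ absolutely continuous with respect to $\beta_*$, so the ordinary Pinsker inequality \cite{csiszar2011information} applies and yields $d_{\tv}(\mu,\beta_*)\le\sqrt{\frac{1}{2}\D_{\kl}(\mu\Vert\beta_*)}$.

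Finally I would chain the estimates. Since $\beta_*\in\Pm(\nu,m)$,
\[
\widehat{d}_{\tv}(\mu,\nu) = \inf_{\beta\in\Pm(\nu,m)}d_{\tv}(\mu,\beta) \le d_{\tv}(\mu,\beta_*) \le \sqrt{\frac{1}{2}\D_{\kl}(\mu\Vert\beta_*)} \le \sqrt{\frac{1}{2}\bigl(\widehat{\D}_{\kl}(\mu\Vert\nu)+\varepsilon\bigr)},
\]
and letting $\varepsilon\downarrow 0$ gives the result. There is essentially no obstacle here; the only points requiring care are bookkeeping ones — confirming via Lemma~\ref{lem:f-div1} that all the divergences $\D_{\kl}(\mu\Vert\beta)$ appearing in the infimum make sense, and noting that the \emph{same} near-minimizer $\beta_*$ can be used on both sides precisely because the augmented total variation distance and the augmented KL-divergence are produced by the identical projection construction over $\Pm(\nu,m)$. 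The same template, with Pinsker replaced by $\D_{\hl}(\mu,\nu)^2\le 2d_{\tv}(\mu,\nu)\le\sqrt{2}\,\D_{\hl}(\mu,\nu)$, will yield Corollary~\ref{cor:htv}.
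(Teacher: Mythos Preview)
Your argument is correct and follows essentially the same route as the paper: both augmented quantities are realized as infima over the common index set $\Pm(\nu,m)$, so the classical Pinsker inequality applied pointwise to each $\beta$ immediately transfers to the infima. Your version is slightly more detailed (the $\varepsilon$/near-minimizer formulation and the explicit appeal to Lemma~\ref{lem:f-div1} for well-definedness), but the underlying idea is identical to the paper's one-line proof.
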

\begin{proof}
Follows from 
$\widehat{\D}_{\kl}(\mu\Vert \nu ) 
= \inf_{\beta\in \Pm(\nu ,m)}\D_{\kl}(\mu \Vert \beta)\ge \inf_{\beta\in \Pm(\nu ,m)}2 d_{\tv}(\mu \Vert \beta)^2 = 2 \widehat{d}_{\tv}(\alpha \Vert \nu )^2$.
\end{proof}

\begin{corollary}\label{cor:htv}
Let $m,n \in \mathbb{N}$, $m \le n$. For any $\mu\in \M_d(\mathbb{R}^m)$ and $\nu \in \M_{pd}(\mathbb{R}^n)$, we have 
\[
\widehat{\D}_{\hl}(\mu,\nu)^2 \le 2\widehat{d}_{\tv}(\mu, \nu) \le \sqrt{2} \widehat{\D}_{\hl}(\mu,\nu). 
\]
\end{corollary}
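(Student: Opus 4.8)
The plan is to follow the template of the proof of Corollary~\ref{cor:pink}: rewrite both augmented distances in their projection form, indexed by the \emph{same} set $\Pm(\nu,m)$, and then propagate the same-dimension chain $\D_{\hl}(\mu,\beta)^2 \le 2 d_{\tv}(\mu,\beta) \le \sqrt{2}\,\D_{\hl}(\mu,\beta)$ through the infimum over $\beta$. Since the Hellinger squared divergence and the total variation metric are both $f$-divergences, with $f(t) = (\sqrt{t}-1)^2$ and $f(t) = |t-1|/2$ respectively, and since $\mu \in \M_d(\mathbb{R}^m)$, $\nu \in \M_{pd}(\mathbb{R}^n)$ put us in the hypotheses of Lemma~\ref{lem:f-div1} and Theorem~\ref{thm:f}, we obtain
\[
\widehat{\D}_{\hl}(\mu,\nu) = \inf_{\beta \in \Pm(\nu,m)} \D_{\hl}(\mu,\beta), \qquad \widehat{d}_{\tv}(\mu,\nu) = \inf_{\beta \in \Pm(\nu,m)} d_{\tv}(\mu,\beta)
\]
(for the second identity one may also invoke Theorem~\ref{thm:tv} directly).

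For the left-hand inequality, I would fix an arbitrary $\beta \in \Pm(\nu,m)$ and combine $\widehat{\D}_{\hl}(\mu,\nu) \le \D_{\hl}(\mu,\beta)$ (an infimum over a set containing $\beta$) with the monotonicity of $x \mapsto x^2$ on $[0,\infty)$ and the classical bound, giving $\widehat{\D}_{\hl}(\mu,\nu)^2 \le \D_{\hl}(\mu,\beta)^2 \le 2 d_{\tv}(\mu,\beta)$; taking the infimum over $\beta$ on the right then yields $\widehat{\D}_{\hl}(\mu,\nu)^2 \le 2\widehat{d}_{\tv}(\mu,\nu)$. For the right-hand inequality, the classical bound $2 d_{\tv}(\mu,\beta) \le \sqrt{2}\,\D_{\hl}(\mu,\beta)$ holds for every $\beta \in \Pm(\nu,m)$; taking the infimum of both sides over $\beta$ gives $2\widehat{d}_{\tv}(\mu,\nu) \le \sqrt{2}\,\widehat{\D}_{\hl}(\mu,\nu)$.

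There is no real obstacle here beyond two pieces of bookkeeping. First, one must confirm that Theorem~\ref{thm:f} (or Theorem~\ref{thm:tv}) genuinely applies so that $\widehat{d}_{\tv}$ and $\widehat{\D}_{\hl}$ can be written as infima over the \emph{common} index set $\Pm(\nu,m)$ --- this is exactly why the hypotheses $\mu \in \M_d(\mathbb{R}^m)$, $\nu \in \M_{pd}(\mathbb{R}^n)$ appear in the statement. Second, one uses the elementary fact that squaring is nondecreasing on the nonnegative reals, which is what lets squaring commute with the infimum: $\widehat{\D}_{\hl}(\mu,\nu)^2 = \bigl(\inf_{\beta} \D_{\hl}(\mu,\beta)\bigr)^2 = \inf_{\beta} \D_{\hl}(\mu,\beta)^2$. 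Neither step is substantive, and the argument is essentially identical in structure to the proof of Corollary~\ref{cor:pink}.
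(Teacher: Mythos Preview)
Your proposal is correct and is essentially the paper's own argument: write both $\widehat{\D}_{\hl}$ and $\widehat{d}_{\tv}$ as infima over the common index set $\Pm(\nu,m)$ via Theorem~\ref{thm:f} (or Theorem~\ref{thm:tv}), apply the classical same-dimension inequalities for each $\beta$, and pass to the infimum, using that squaring is monotone on $[0,\infty)$ so that $\bigl(\inf_\beta \D_{\hl}(\mu,\beta)\bigr)^2 = \inf_\beta \D_{\hl}(\mu,\beta)^2$. There is nothing to add.
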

\begin{proof}
Clearly the two inequalities hold for $\D_{\hl}$ and $d_{\tv}$ when $m = n$. The inequidimensional version then follows from
\begin{align*}
\widehat{\D}_{\hl}(\mu,\nu)^2 &= \inf_{\beta\in \Pm(\nu ,m)} \D_{\hl}(\mu, \beta)^2 \\
&\le \inf_{\beta\in \Pm(\nu ,m)} 2d_{\tv}(\mu, \beta) = 2\widehat{d}_{\tv}(\mu, \nu)\\
&\le \inf_{\beta\in \Pm(\nu ,m)} \sqrt{2}\D_{\hl}(\mu, \beta)=\sqrt{2} \widehat{\D}_{\hl}(\mu,\nu).  \qedhere
\end{align*}
\end{proof}

\section{Jensen--Shannon divergence}\label{sec:js}

Let $\mu,\nu \in \M(\mathbb{R}^n)$ and $\theta \in (0,1)$. The \emph{Jensen--Shannon divergence} is  defined by
\begin{equation}\label{eq:js}
\D_{\js}(\mu,\nu ) \coloneqq \frac{1}{2}\D_{\kl}(\mu \Vert \zeta) + \frac{1}{2}\D_{\kl}(\nu \Vert \eta),
\end{equation}
where $\zeta \coloneqq (1-\theta)\mu+\theta\nu$ and $\eta \coloneqq  (1-\theta)\nu + \theta \mu$. What we call Jensen--Shannon divergence here is slightly more general \cite{nielsen2010family} than the usual definition \cite{JS},\footnote{Neither Jensen nor Shannon is a coauthor of \cite{JS}. The name comes from an application of Jensen inequality to Shannon entropy as a convex function to establish nonnegativity of the divergence.} which corresponds to the case when $\theta =1/2$.  When $\theta = 1$, we get the Jeffreys divergence in Table~\ref{tab:f} as another special case. We have written $\D_{\js}(\mu,\nu ) $ instead of the usual $\D_{\js} f(\mu \Vert \nu ) $ for $f$-divergence to remind the reader  that $\D_{\js}$ is symmetric in its arguments; in fact, $\D_{\js}(\mu,\nu )^{1/2} $ defines a metric on $\M(\mathbb{R}^n)$.

The Jensen--Shannon divergence is often viewed as the symmetrization of the Kullback--Liebler divergence but this perspective hides an important distinction, namely, the JS-divergence may be  defined on probability measures without  densities: Observe that $\mu,\nu$ are automatically absolutely continuous with respect to $\zeta$ and $\eta$. As such the definition in \eqref{eq:js} is valid for any $\mu,\nu \in \M(\mathbb{R}^n)$ and we do not need to work over $\M_d(\mathbb{R}^n)$ like in Section~\ref{sec:f}.

The JS-divergence is used in applications to compare genome  \cite{itzkovitz2010overlapping,sims2009alignment} and protein surfaces \cite{ofran2003analysing} in bioinformatics; to quantify information flow in social and biological systems \cite{dedeo2013bootstrap,  klingenstein2014civilizing}, and to detect anomalies in fire experiments \cite{mitroi2020parametric}. It was notably used to establish the main theorem in the landmark paper on Generative Adversarial Nets \cite{GAN}.

\begin{lemma}\label{lem:ProjJenShanInequal}
Let $m,n \in \mathbb{N}$, $m \le n$, and $\theta \in (0,1)$. For any $\alpha, \nu \in \M(\mathbb{R}^n)$, $V \in \O(m,n)$, and $b\in \mathbb{R}^m$,
\[
\D_{\js}(\alpha, \nu ) \ge \D_{\js}\bigl(\varphi_{V,b}(\alpha), \varphi_{V,b}(\nu )\bigr).
\]
\end{lemma}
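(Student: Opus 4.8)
The plan is to reduce the statement to the pushforward-monotonicity of the KL-divergence, exploiting the fact that the mixture structure in the definition of $\D_{\js}$ is preserved by linear maps. First I would record that pushforward commutes with convex combinations: since $\mu \mapsto \varphi_{V,b}(\mu)$ is linear on measures, writing $\alpha' \coloneqq \varphi_{V,b}(\alpha)$, $\nu' \coloneqq \varphi_{V,b}(\nu)$, and $\zeta \coloneqq (1-\theta)\alpha+\theta\nu$, $\eta \coloneqq (1-\theta)\nu+\theta\alpha$, we get $\varphi_{V,b}(\zeta) = (1-\theta)\alpha'+\theta\nu'$ and $\varphi_{V,b}(\eta) = (1-\theta)\nu'+\theta\alpha'$, i.e.\ $\varphi_{V,b}(\zeta)$ and $\varphi_{V,b}(\eta)$ are exactly the mixtures associated with the pair $(\alpha',\nu')$. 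Hence
\[
\D_{\js}\bigl(\varphi_{V,b}(\alpha),\varphi_{V,b}(\nu)\bigr) = \tfrac12 \D_{\kl}\bigl(\varphi_{V,b}(\alpha)\Vert \varphi_{V,b}(\zeta)\bigr) + \tfrac12 \D_{\kl}\bigl(\varphi_{V,b}(\nu)\Vert \varphi_{V,b}(\eta)\bigr),
\]
and it suffices to prove $\D_{\kl}(\varphi_{V,b}(\alpha)\Vert\varphi_{V,b}(\zeta)) \le \D_{\kl}(\alpha\Vert\zeta)$ together with the analogous bound for $(\nu,\eta)$, and then add.

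The second step is to establish this KL monotonicity \emph{without} any Lebesgue-density hypothesis, which is precisely what lets the lemma apply to arbitrary $\alpha,\nu\in\M(\mathbb{R}^n)$. The key point is that $\alpha$ is automatically absolutely continuous with respect to $\zeta=(1-\theta)\alpha+\theta\nu$ --- if $\zeta(A)=0$ then $\alpha(A)=0$ since $1-\theta>0$ --- with Radon--Nikodym derivative $t=d\alpha/d\zeta$ bounded by $1/(1-\theta)$, so in particular $\D_{\kl}(\alpha\Vert\zeta)\le\log\frac{1}{1-\theta}<\infty$. I would then run the argument of Lemma~\ref{lem:ProjFdivInequal} with $\zeta$ in the role of the reference measure there: disintegrate $\zeta$ along $\varphi_{V,b}$ via Theorem~\ref{thm:disin} to obtain probability measures $\{\zeta_y\}$; testing against functions shows directly that $d\varphi_{V,b}(\alpha)(y)=t'(y)\,d\varphi_{V,b}(\zeta)(y)$ with $t'(y)=\int_{\varphi_{V,b}^{-1}(y)}t(x)\,d\zeta_y(x)$; and Jensen's inequality applied to the convex function $f(u)=u\log u$ gives $f(t'(y))\le\int_{\varphi_{V,b}^{-1}(y)}f(t(x))\,d\zeta_y(x)$, which upon integrating against $\varphi_{V,b}(\zeta)$ and using the disintegration identity yields $\D_{\kl}(\varphi_{V,b}(\alpha)\Vert\varphi_{V,b}(\zeta))\le\D_{\kl}(\alpha\Vert\zeta)$. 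The only place Lemma~\ref{lem:ProjFdivInequal} invokes Lebesgue densities is inessential --- the identity $d\mu=t'\,d\beta$ there follows from disintegration applied to test functions, with no mention of $\lambda^n$ --- so the same proof goes through.

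The main obstacle, such as it is, is the bookkeeping in this density-free reformulation: one must cite clause~(i) of Theorem~\ref{thm:disin} for measurability of $y\mapsto t'(y)$ and for the iterated-integral identity, and one should note that since $f(u)=u\log u\ge-1/e$ one may apply Theorem~\ref{thm:disin}(iii) to $f\circ t+1/e\ge0$ and cancel the constant $1/e$ afterwards (legitimate because $\zeta_y$, $\zeta$, and $\varphi_{V,b}(\zeta)$ are all probability measures). Finiteness of every KL term, guaranteed by $t\le1/(1-\theta)$, ensures the Jensen step is not a vacuous ``$\infty\ge\infty$''. Once the two halves of $\D_{\js}$ are bounded termwise, summation finishes the proof; note that, in contrast to Lemma~\ref{lem:ProjFdivInequal}, no absolute-continuity assumption on $\alpha$ and $\nu$ themselves is needed, since the mixtures $\zeta$ and $\eta$ supply it for free.
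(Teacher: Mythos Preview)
Your proposal is correct and follows the same strategy as the paper: observe that pushforward commutes with the convex combinations defining $\zeta$ and $\eta$, then invoke the monotonicity of KL (equivalently, $f$-divergence) under $\varphi_{V,b}$ termwise. The paper's proof is a one-line reference to Lemma~\ref{lem:ProjFdivInequal} plus the mixture-preservation check; you go further and spell out why that lemma's argument still works without the Lebesgue-density hypotheses $\alpha\in\M_d$, $\nu\in\M_{pd}$ --- namely, $\alpha\ll\zeta$ holds automatically with bounded density, so one can disintegrate $\zeta$ itself and run Jensen exactly as before --- a point the paper leaves implicit.
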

\begin{proof}
The proof is similar to that of Lemma~\ref{lem:ProjFdivInequal}. We only need to check that $\varphi_{V,b}(\zeta) =(1-\theta)\varphi_{V,b}(\alpha)+ \theta\varphi_{V,b}(\nu)$, $\varphi_{V,b}(\eta) = \theta\varphi_{V,b}(\alpha)+ (1-\theta)\varphi_{V,b}(\nu)$, where $\zeta = (1-\theta)\mu+\theta\nu$, $\eta = (1-\theta)\nu + \theta \mu$.
\end{proof} 

We now prove Theorem~\ref{thm:main} for Jensen--Shannon divergence.
\begin{theorem}\label{thm:JS}
Let $m,n \in \mathbb{N}$ and $m \le n$. 
For $\mu\in \M(\mathbb{R}^m)$ and $\nu \in \M(\mathbb{R}^n)$, let
\begin{align*}
\PD_{\js}(\mu, \nu ) &\coloneqq\inf_{\beta \in \Pm(\nu ,m)}\D_{\js}(\mu, \beta),\\
\ED_{\js}(\mu, \nu ) &\coloneqq\inf_{\alpha \in \Em(\mu,n)}\D_{\js}(\alpha, \nu ).
\end{align*}
Then 
\begin{equation}\label{eq:JS}
\PD_{\js}(\mu, \nu ) = \ED_{\js}(\mu, \nu ).
\end{equation}
\end{theorem}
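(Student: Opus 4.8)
The plan is to mirror the proof of Theorem~\ref{thm:W} and Theorem~\ref{thm:f}, which follow the same two-sided argument: an easy inequality coming from the monotonicity lemma (here Lemma~\ref{lem:ProjJenShanInequal}), and a harder reverse inequality built via the Disintegration Theorem. Specifically, to see $\PD_{\js}(\mu,\nu) \le \ED_{\js}(\mu,\nu)$, I would take any $\alpha \in \Em(\mu,n)$ with $\varphi_{V_\alpha,b_\alpha}(\alpha) = \mu$; then Lemma~\ref{lem:ProjJenShanInequal} gives $\D_{\js}(\alpha,\nu) \ge \D_{\js}(\mu, \varphi_{V_\alpha,b_\alpha}(\nu))$, and since $\varphi_{V_\alpha,b_\alpha}(\nu) \in \Pm(\nu,m)$, taking infima yields $\ED_{\js}(\mu,\nu) \ge \PD_{\js}(\mu,\nu)$. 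Note that this direction is actually cleaner than in the Wasserstein/$f$-divergence cases because $\D_{\js}$ is defined on all of $\M(\mathbb{R}^n)$, so there is no need to pass to a subset like $\Em_p$ or $\Em_d$.

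The substance is the reverse inequality $\PD_{\js}(\mu,\nu) \ge \ED_{\js}(\mu,\nu)$. Fix $\varepsilon > 0$ and choose $\beta_* \in \Pm(\nu,m)$ with $\D_{\js}(\mu,\beta_*) \le \PD_{\js}(\mu,\nu) + \varepsilon$, realized by $\varphi_{V_*,b_*}(\nu) = \beta_*$ for some $V_* \in \O(m,n)$, $b_* \in \mathbb{R}^m$; pick $W_* \in \O(n-m,n)$ with $\begin{bsmallmatrix} V_*\\ W_*\end{bsmallmatrix} \in \O(n)$. Apply the Disintegration Theorem to $\varphi_{V_*,b_*}$ to get $\{\nu_y : y \in \mathbb{R}^m\}$ with $\int_{\mathbb{R}^n} d\nu(x) = \int_{\mathbb{R}^m}\int_{\varphi_{V_*,b_*}^{-1}(y)} d\nu_y(x)\, d\beta_*(y)$, and define the ``lifted'' measure $\alpha_* \in \M(\mathbb{R}^n)$ by $\int_{\mathbb{R}^n} d\alpha_*(x) = \int_{\mathbb{R}^m}\int_{\varphi_{V_*,b_*}^{-1}(y)} d\nu_y(x)\, d\mu(y)$, exactly as in the proof of Theorem~\ref{thm:f}. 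By construction $\varphi_{V_*,b_*}(\alpha_*) = \mu$, so $\alpha_* \in \Em(\mu,n)$. The crux is then to verify $\D_{\js}(\alpha_*,\nu) = \D_{\js}(\mu,\beta_*)$, or at least $\le$; combined with Lemma~\ref{lem:ProjJenShanInequal} (which gives $\ge$) this forces equality, and then $\ED_{\js}(\mu,\nu) \le \D_{\js}(\alpha_*,\nu) = \D_{\js}(\mu,\beta_*) \le \PD_{\js}(\mu,\nu) + \varepsilon$ finishes the proof since $\varepsilon$ is arbitrary.

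To compute $\D_{\js}(\alpha_*,\nu)$, I would use the same disintegration bookkeeping. Write $\zeta_* = (1-\theta)\alpha_* + \theta\nu$ and $\eta_* = \theta\alpha_* + (1-\theta)\nu$ for the mixtures appearing in \eqref{eq:js}; the key observation is that both $\alpha_*$ and $\nu$ disintegrate over $\varphi_{V_*,b_*}$ using the \emph{same} family $\{\nu_y\}$ on the fibers $\varphi_{V_*,b_*}^{-1}(y)$ — only the base measures $\mu$ versus $\beta_*$ differ. Consequently $\zeta_*$ disintegrates over $\varphi_{V_*,b_*}$ with fiber measures $\nu_y$ and base measure $(1-\theta)\mu + \theta\beta_*$, and likewise $\eta_*$ with base $\theta\mu + (1-\theta)\beta_*$. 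Since $\mu$ has Radon–Nikodym derivative $t(y)$ with respect to $\beta_*$ (as in the proof of Theorem~\ref{thm:f}), the derivative $d\alpha_*/d\nu$ equals $t(\varphi_{V_*,b_*}(x))$ — constant along each fiber — and similarly $d\zeta_*/d\nu$, $d\alpha_*/d\zeta_*$ are pullbacks under $\varphi_{V_*,b_*}$ of the corresponding one-dimensional-in-the-base ratios. Then, applying Theorem~\ref{thm:disin}(iii) to the integrand $\log(d\alpha_*/d\zeta_*)$ with respect to $\alpha_*$, the fiber integrals collapse (the integrand depends only on $y$, and $\alpha_*$'s fiber measures $\nu_y$ have total mass $1$), giving $\int_{\mathbb{R}^n} \log(d\alpha_*/d\zeta_*)\, d\alpha_* = \int_{\mathbb{R}^m} \log(d\mu/d\zeta)\, d\mu$ where $\zeta = (1-\theta)\mu + \theta\beta_*$; the same for the $\nu$-versus-$\eta_*$ term, yielding $\D_{\js}(\alpha_*,\nu) = \D_{\js}(\mu,\beta_*)$.

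The main obstacle, as in the analogous theorems, is purely the measure-theoretic care needed to justify that the disintegration families for $\alpha_*$, $\nu$, $\zeta_*$, $\eta_*$ are compatible and that the Radon–Nikodym derivatives are honestly fiberwise constant — i.e., that pushing forward and disintegrating interact correctly. One should also double-check the edge behavior of $\log$ where densities can vanish; here it is harmless because $\mu,\nu$ are automatically absolutely continuous with respect to $\zeta_*,\eta_*$, so all the relevant derivatives are finite $\nu$- (resp. $\mu$-) a.e. and the integrals are well-defined (possibly $+\infty$, in which case the equality still holds trivially on both sides). No new ideas beyond those in the proofs of Theorems~\ref{thm:W} and~\ref{thm:f} are required; the argument is essentially a transcription of the $f$-divergence proof with the $f(t) = t\log t$ computations done twice, once for each KL term in \eqref{eq:js}.
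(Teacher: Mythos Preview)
Your proposal is correct and follows essentially the same route as the paper's proof: the easy inequality via Lemma~\ref{lem:ProjJenShanInequal}, then the lift $\alpha_*$ built from the disintegration $\{\nu_y\}$ with base $\mu$, followed by the verification that $\D_{\js}(\alpha_*,\nu)=\D_{\js}(\mu,\beta_*)$ because the Radon--Nikodym derivatives $d\alpha_*/d\zeta_*$ and $d\nu/d\eta_*$ are pullbacks of $d\mu/d\zeta$ and $d\beta_*/d\eta$ along $\varphi_{V_*,b_*}$. One caution: your intermediate remark that ``$\mu$ has Radon--Nikodym derivative $t(y)$ with respect to $\beta_*$ (as in the proof of Theorem~\ref{thm:f})'' is not justified here, since in the JS setting $\mu$ and $\beta_*$ are arbitrary Borel probability measures and $\mu\ll\beta_*$ can fail; fortunately your actual computation never uses $d\mu/d\beta_*$ or $d\alpha_*/d\nu$, only the always-valid derivatives with respect to the mixtures $\zeta,\eta,\zeta_*,\eta_*$, which is exactly how the paper proceeds.
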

\begin{proof}
For any $\alpha \in \Em(\mu, n)$,  there exist $V_\alpha \in \O(m,n)$ and $b_\alpha \in \mathbb{R}^m$ with $\varphi_{V_\alpha, b_\alpha }(\alpha )=\mu$.
It follows from Lemma~\ref{lem:ProjJenShanInequal} that $\D_{\js}(\alpha, \nu) \ge \D_{\js}\bigl(\mu, \varphi_{V_\alpha,b_\alpha}(\nu)\bigr)$ and thus
\begin{align*}
\inf_{\alpha \in \Em(\mu, n)}\D_{\js}(\alpha, \nu ) &\ge \inf_{\alpha \in \Em(\mu, n)}\D_{\js}\bigl(\mu, \varphi_{V_\alpha, b_\alpha }(\nu )\bigr)\\
&\ge \inf_{V\in \O(m,n), \;b\in \mathbb{R}^m}\D_{\js}\bigl(\mu, \varphi_{V,b}(\nu )\bigr)
\end{align*}
and thus $\PD_{\js}(\mu,\nu )\le \ED_{\js}(\mu, \nu )$.

We next show that $\PD_{\js}(\mu, \nu )\ge \ED_{\js}(\mu, \nu )$. By the definition of $\PD_{\js}(\mu, \nu)$, for each $\alpha \in \Em(\mu, n)$ and any $\varepsilon > 0$, there exists $\beta_* \in \Pm(\nu, m)$ such that 
\[
\PD_{\js}(\mu ,\nu )\le \D_{\js}(\mu, \beta_*)\le \PD_{\js}(\mu,\nu )+\varepsilon.
\]
Let $V_*\in \O(m,n)$ and $b_* \in \mathbb{R}^m$ be such that $\varphi_{V_*,b_*}(\nu) = \beta_*$. Applying Theorem~\ref{thm:disin} to $\varphi_{V_*, b_*}$, we obtain $\{\nu _y \in \M(\mathbb{R}^n): y\in \mathbb{R}^m\}$ that satisfies
\[
\int_{\mathbb{R}^n} f(x)\, d\nu (x) = \int_{\mathbb{R}^m} \int_{\varphi_{V_*,b_*}^{-1}(y)}f(x)\, d\nu_y(x) \,d \beta_*(y)
\]
for any measurable function $f$. Let $\alpha _*\in \M(\mathbb{R}^n)$ be such that
\[
\alpha_*(S) = \int_{\mathbb{R}^n} \mathbb{I}_{x\in S}\, d\alpha _*(x) = \int_{\mathbb{R}^m} \int_{\varphi_{V_*,b_*}^{-1}(y)}\hspace*{-2ex} \mathbb{I}_{x\in S}\, d\nu_y(x) \,d \mu(y)
\]
for any measurable set $S\subseteq \mathbb{R}^n$. Then $\varphi_{V_*, b_*}(\alpha _*)=\mu$ and so $\alpha_*\in \Em(\mu, n)$. Consider the weighted measures 
\begin{align*}
\zeta^* &\coloneqq (1-\theta) \mu+\theta \beta_*, &\eta^* &\coloneqq \theta \mu+(1-\theta)\beta_*, \\
\xi_1 &\coloneqq (1-\theta ) \alpha _*+\theta \nu, &\xi_2 &\coloneqq \theta \alpha _*+(1-\theta)\nu.
\end{align*}
Since $\mu$ is absolutely continuous with respect to $\zeta^*$ and $\beta_*$  to $\eta^*$, we let $d\mu= g_1\,d\zeta^*$ and  $d\beta_* = g_2\,d\eta^*$. Then we have 
\begin{align*}
\varphi_{V_*, b_*}(\xi_1) &= \zeta^*, &d\alpha _*(x) &= g_1\bigl(\varphi_{V_*, b_*}(x)\bigr)\,d\xi_1(x), \\
\varphi_{V_*, b_*}(\xi_2) &= \eta^*,  &d\nu (x) &= g_2\bigl(\varphi_{V_*, b_*}(x)\bigr)\,d\xi_2(x).
\end{align*}
By the definition of $\ED_{\js}$,
\begin{align*}
\ED_{\js}(\mu, \nu ) &\le \D_{\js}(\alpha _*, \nu )= \frac{1}{2} \bigl[\D_{\kl}(\alpha _*\Vert \xi_1)+\D_{\kl}(\nu \Vert \xi_2)\bigr]\\
&= \frac{1}{2}\int_{\mathbb{R}^n} \log\bigl[g_1\bigl(\varphi_{V_*, b_*}(x)\bigr)\bigr]g_1\bigl(\varphi_{V_*, b_*}(x)\bigr)\, d\xi_1(x)\\
&\qquad\quad+\log\bigl[g_2\bigl(\varphi_{V_*, b_*}(x)\bigr)\bigr]g_2\bigl(\varphi_{V_*, b_*}(x)\bigr)\, d\xi_2(x)\\
&= \frac{1}{2}\int_{\mathbb{R}^m} \log\bigl(g_1(y)\bigr)g_1(y)\,d\zeta^*(y)\\
&\qquad\quad+\log\bigl(g_2(y)\bigr)g_2(y)\,d\eta^*(y) \\
&= \frac{1}{2}\bigl[\D_{\kl}(\mu \Vert \zeta^*)+\D_{\kl}(\beta_*\Vert \eta^*)\bigr]\\
&= \D_{\js}(\mu,\beta_*) \le \PD_{\js}(\mu,\nu )+\varepsilon.
\end{align*}
Since $\varepsilon >0$ is arbitrary, $\ED_{\js}(\mu, \nu )\le \PD_{\js}(\mu, \nu )$.
\end{proof}

\section{Total Variation Distance}\label{sec:tv}

The total variation metric is quite possibly the most classical notion of distance between probability measures. It is used in Markov models \cite{chen2014total,ding2010total}, stochastic processes \cite{nourdin2013convergence}, Monte Carlo algorithms \cite{brooks1997approach}, geometric approximation \cite{pekoz2013total}, image restoration \cite{osher2005iterative}, among other areas.

The definition is straightforward: The \emph{total variation metric} between $\mu ,\nu \in \M(\mathbb{R}^n)$ is simply
\[
d_{\tv}(\mu ,\nu ) \coloneqq \sup_{A\in \Sigma(\mathbb{R}^n)} |\mu (A)-\nu (A)|.
\]
As we saw in Section~\ref{sec:f}, when the probability measures have densities, the total variation metric is a special case of the $f$-divergence with $f(t) = \lvert t - 1 \rvert/2$. While it is not a special case of the Wasserstein metric in Section~\ref{sec:wass}, it is related in that
\[
d_{\tv}(\mu , \nu )=\inf _{\gamma\in \Gamma(\mu ,\nu )} \int_{\mathbb{R}^{2n}} \mathbb{I}_{x \neq y}(x,y) \, d\gamma(x,y),
\]
where $\Gamma(\mu ,\nu )$ is the set of couplings as in the definition of Wasserstein metric and $\mathbb{I}_{x \neq y}$ is an indicator function, i.e., takes value $1$ when $x\ne y$ and $0$ when $x = y$.

For any $\mu,\nu \in \M(\mathbb{R}^n)$, it follows from Hahn Decomposition, i.e., Theorem~\ref{thm:Hahn}, that there exists $S \in \Sigma(\mathbb{R}^n)$ with
\begin{equation}\label{eq:hahn0}
d_{\tv}(\mu ,\nu ) = \mu (S)-\nu (S).
\end{equation}
So for any measurable $B\subseteq S$, $ C\subseteq S^\complement  \coloneqq \mathbb{R}^n \setminus S$, 
\begin{equation}\label{eq:hahn1}
\mu (B)-\nu (B)\ge 0, \quad \mu (C)-\nu (C)\le 0;
\end{equation}
or, equivalently, for all measurable function $f(x)\ge 0$, 
\begin{equation}\label{eq:hahn2}
\begin{aligned}
\int_S f(x)\,d\bigl(\mu(x)-\nu (x)\bigr) &\ge 0 ,\\
\int_{S^\complement } f(x)\, d\bigl(\mu (x)-\nu (x)\bigr) &\le 0.
\end{aligned}
\end{equation}

\begin{lemma}\label{lem:tvprojinequal}
Let $\alpha ,\nu \in \M(\mathbb{R}^n)$. Then for any $V \in \O(m,n)$ and $b\in \mathbb{R}^m$,
\[
\dd_{\tv}(\alpha ,\nu ) \ge \dd_{\tv}\bigl(\varphi_{V,b}(\alpha ), \varphi_{V,b}(\nu )\bigr ).
\]
\end{lemma}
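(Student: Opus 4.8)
The plan is to obtain the inequality as an instance of the data-processing inequality for the total variation metric, namely that pushing forward through a measurable map can only decrease $d_{\tv}$; the argument is short enough that I outline it step by step rather than merely sketch it.

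First I would record that $\varphi_{V,b}\colon\mathbb{R}^n\to\mathbb{R}^m$ is affine, hence continuous, hence Borel measurable, so that $\varphi_{V,b}^{-1}(A)\in\Sigma(\mathbb{R}^n)$ for every $A\in\Sigma(\mathbb{R}^m)$ (this is also exactly what makes the pushforward $\varphi_{V,b}(\mu)=\mu\circ\varphi_{V,b}^{-1}$ well defined on $\Sigma(\mathbb{R}^m)$). Second, for any such $A$ I would simply compare against the test set $\varphi_{V,b}^{-1}(A)$ in the ambient space:
\[
\bigl\lvert\varphi_{V,b}(\alpha)(A)-\varphi_{V,b}(\nu)(A)\bigr\rvert
=\bigl\lvert\alpha\bigl(\varphi_{V,b}^{-1}(A)\bigr)-\nu\bigl(\varphi_{V,b}^{-1}(A)\bigr)\bigr\rvert
\le\sup_{B\in\Sigma(\mathbb{R}^n)}\lvert\alpha(B)-\nu(B)\rvert
= d_{\tv}(\alpha,\nu).
\]
Third, taking the supremum over $A\in\Sigma(\mathbb{R}^m)$ on the left-hand side gives $d_{\tv}\bigl(\varphi_{V,b}(\alpha),\varphi_{V,b}(\nu)\bigr)\le d_{\tv}(\alpha,\nu)$, which is the claim.

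An equivalent route, more in keeping with the Hahn-decomposition style already set up in this section, would be to apply Theorem~\ref{thm:Hahn} to the signed measure $\varphi_{V,b}(\alpha)-\varphi_{V,b}(\nu)$ on $\mathbb{R}^m$, obtaining $S\in\Sigma(\mathbb{R}^m)$ with $d_{\tv}\bigl(\varphi_{V,b}(\alpha),\varphi_{V,b}(\nu)\bigr)=\varphi_{V,b}(\alpha)(S)-\varphi_{V,b}(\nu)(S)$ exactly as in \eqref{eq:hahn0}, and then bounding $d_{\tv}(\alpha,\nu)$ from below by testing against $\varphi_{V,b}^{-1}(S)$; this yields the same estimate. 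I do not expect a genuine obstacle here: the one point that needs a sentence of justification is the measurability of $\varphi_{V,b}^{-1}(A)$, which is immediate from continuity of $\varphi_{V,b}$, and the rest is a direct comparison of suprema. In contrast to Lemmas~\ref{lem:projWasInequal}, \ref{lem:ProjFdivInequal}, and \ref{lem:ProjJenShanInequal}, neither the Disintegration Theorem nor Jensen's inequality is needed, since monotonicity of $d_{\tv}$ under pushforward holds for an arbitrary measurable map and in particular for the orthogonal projection plus translation $\varphi_{V,b}$.
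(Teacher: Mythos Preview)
Your proposal is correct and essentially the same as the paper's proof; in fact your ``equivalent route'' via the Hahn decomposition is exactly the paper's one-line argument, which picks $S$ with $d_{\tv}(\varphi_{V,b}(\alpha),\varphi_{V,b}(\nu))=\varphi_{V,b}(\alpha)(S)-\varphi_{V,b}(\nu)(S)$ and then tests against $\varphi_{V,b}^{-1}(S)$. Your first route is a mild streamlining that bypasses Hahn by bounding for an arbitrary $A$ and taking the supremum afterward, but the content is identical.
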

\begin{proof}
By \eqref{eq:hahn0} and \eqref{eq:hahn1},
$\dd_{\tv}\bigl(\varphi_{V,b}(\alpha),\varphi_{V,b}(\nu)\bigr) = \varphi_{V,b}(\alpha)(S)-\varphi_{V,b}(\nu)(S)
= \alpha\bigl(\varphi_{V,b}^{-1}(S)\bigr)-\nu\bigl(\varphi_{V,b}^{-1}(S)\bigr)\le \dd_{\tv}(\alpha,\nu)$.
\end{proof}
\begin{theorem}\label{thm:tv}
Let $m,n \in \mathbb{N}$ and $m \le n$. 
For $\mu\in \M(\mathbb{R}^m)$ and $\nu \in \M(\mathbb{R}^n)$, let
\begin{align*}
\Pd_{\tv}(\mu, \nu ) &\coloneqq\inf_{\beta \in \Pm(\nu ,m)}\dd_{\tv}(\mu, \beta),\\
\Ed_{\tv}(\mu, \nu ) &\coloneqq\inf_{\alpha \in \Em(\mu,n)}\dd_{\tv}(\alpha, \nu ).
\end{align*}
Then
\begin{equation}
\label{eq:tv}
\Ed_{\tv}(\mu,\nu ) = \Pd_{\tv}(\mu, \nu ).
\end{equation}
\end{theorem}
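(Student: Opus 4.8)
The plan is to follow the same two-inequality structure used in the proofs of Theorems~\ref{thm:W}, \ref{thm:f}, and \ref{thm:JS}. The easy direction, $\Pd_{\tv}(\mu,\nu) \le \Ed_{\tv}(\mu,\nu)$, follows immediately from Lemma~\ref{lem:tvprojinequal}: for any $\alpha \in \Em(\mu,n)$ there exist $V_\alpha \in \O(m,n)$ and $b_\alpha \in \mathbb{R}^m$ with $\varphi_{V_\alpha,b_\alpha}(\alpha) = \mu$, so $\dd_{\tv}(\alpha,\nu) \ge \dd_{\tv}\bigl(\mu, \varphi_{V_\alpha,b_\alpha}(\nu)\bigr)$, and taking infima over $\Em(\mu,n)$ and then over all $V \in \O(m,n)$, $b \in \mathbb{R}^m$ gives $\Ed_{\tv}(\mu,\nu) \ge \Pd_{\tv}(\mu,\nu)$.

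For the reverse inequality $\Pd_{\tv}(\mu,\nu) \ge \Ed_{\tv}(\mu,\nu)$, I would fix $\varepsilon > 0$ and choose $\beta_* \in \Pm(\nu,m)$ with $\dd_{\tv}(\mu,\beta_*) \le \Pd_{\tv}(\mu,\nu) + \varepsilon$, together with $V_* \in \O(m,n)$, $b_* \in \mathbb{R}^m$ such that $\varphi_{V_*,b_*}(\nu) = \beta_*$, and $W_* \in \O(n-m,n)$ with $\begin{bsmallmatrix} V_* \\ W_* \end{bsmallmatrix} \in \O(n)$. Apply the Disintegration Theorem (Theorem~\ref{thm:disin}) to $\varphi_{V_*,b_*}$ to obtain $\{\nu_y \in \M(\mathbb{R}^n) : y \in \mathbb{R}^m\}$ with $\nu_y$ supported on the fiber $\varphi_{V_*,b_*}^{-1}(y)$ and $\int_{\mathbb{R}^n} d\nu(x) = \int_{\mathbb{R}^m} \int_{\varphi_{V_*,b_*}^{-1}(y)} d\nu_y(x)\, d\beta_*(y)$. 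Then define the embedded measure $\alpha_* \in \M(\mathbb{R}^n)$ by "recombining" the fiberwise measures $\nu_y$ over $\mu$ instead of over $\beta_*$, i.e., $\int_{\mathbb{R}^n} d\alpha_*(x) = \int_{\mathbb{R}^m}\int_{\varphi_{V_*,b_*}^{-1}(y)} d\nu_y(x)\, d\mu(y)$. By construction $\varphi_{V_*,b_*}(\alpha_*) = \mu$, so $\alpha_* \in \Em(\mu,n)$; note that, unlike the $f$-divergence and Wasserstein cases, no density or moment hypotheses are needed here, so there is no auxiliary set like $\Em_d$ or $\Em_p$ to worry about.

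The heart of the argument is to show $\dd_{\tv}(\alpha_*,\nu) \le \dd_{\tv}(\mu,\beta_*)$. I would use the Hahn-decomposition characterization \eqref{eq:hahn0}: let $S \subseteq \mathbb{R}^m$ be a set with $\dd_{\tv}(\mu,\beta_*) = \mu(S) - \beta_*(S)$ and satisfying \eqref{eq:hahn1}. The natural candidate witnessing set for the pair $(\alpha_*,\nu)$ in $\mathbb{R}^n$ should be the cylinder $\widetilde{S} \coloneqq \varphi_{V_*,b_*}^{-1}(S)$. Since $\nu_y$ is concentrated on the fiber over $y$, one computes $\alpha_*(\widetilde{S}) = \int_S \nu_y(\varphi_{V_*,b_*}^{-1}(y))\, d\mu(y) = \mu(S)$ and similarly $\nu(\widetilde{S}) = \beta_*(S)$, so $\dd_{\tv}(\alpha_*,\nu) \ge \alpha_*(\widetilde{S}) - \nu(\widetilde{S}) = \mu(S) - \beta_*(S) = \dd_{\tv}(\mu,\beta_*)$. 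For the matching upper bound $\dd_{\tv}(\alpha_*,\nu) \le \dd_{\tv}(\mu,\beta_*)$ one can simply invoke Lemma~\ref{lem:tvprojinequal} applied to $\alpha_*$ and $\nu$: $\dd_{\tv}(\alpha_*,\nu) \ge \dd_{\tv}(\varphi_{V_*,b_*}(\alpha_*), \varphi_{V_*,b_*}(\nu)) = \dd_{\tv}(\mu,\beta_*)$ gives the opposite; rather, the inequality I actually need is $\dd_{\tv}(\alpha_*,\nu) \le \dd_{\tv}(\mu,\beta_*)$, which I would get from the explicit disintegrated representation by bounding $|\alpha_*(A) - \nu(A)|$ for arbitrary measurable $A \subseteq \mathbb{R}^n$: writing $A_y \coloneqq A \cap \varphi_{V_*,b_*}^{-1}(y)$, we have $\alpha_*(A) = \int_{\mathbb{R}^m} \nu_y(A_y)\, d\mu(y)$ and $\nu(A) = \int_{\mathbb{R}^m} \nu_y(A_y)\, d\beta_*(y)$, so $\alpha_*(A) - \nu(A) = \int_{\mathbb{R}^m} \nu_y(A_y)\, d(\mu - \beta_*)(y)$, and since $0 \le \nu_y(A_y) \le 1$ this is bounded in absolute value by $\sup_{S' \in \Sigma(\mathbb{R}^m)} |\mu(S') - \beta_*(S')| = \dd_{\tv}(\mu,\beta_*)$ (splitting the integral over $\{\nu_y(A_y) \ge 0\}$ is unnecessary since one directly uses that the total variation of the signed measure $\mu - \beta_*$ equals $2\dd_{\tv}(\mu,\beta_*)$, but the cleaner route is the pairing bound $|\int h\, d(\mu-\beta_*)| \le \|h\|_\infty \cdot \|\mu - \beta_*\|_{\mathrm{TV}}$ refined via the Hahn decomposition). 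Combining the two directions gives $\dd_{\tv}(\alpha_*,\nu) = \dd_{\tv}(\mu,\beta_*) \le \Pd_{\tv}(\mu,\nu) + \varepsilon$, hence $\Ed_{\tv}(\mu,\nu) \le \Pd_{\tv}(\mu,\nu) + \varepsilon$, and letting $\varepsilon \to 0$ finishes the proof. The main obstacle I anticipate is making the measure-theoretic bookkeeping around the disintegration rigorous — in particular justifying the identity $\alpha_*(A) - \nu(A) = \int \nu_y(A \cap \varphi_{V_*,b_*}^{-1}(y))\, d(\mu-\beta_*)(y)$ and the resulting clean total-variation bound — since the fibers are measure-zero sets and one must be careful that the maps $y \mapsto \nu_y(A_y)$ are measurable (which is supplied by Theorem~\ref{thm:disin}(i)).
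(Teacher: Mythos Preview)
Your proposal is correct and follows essentially the same approach as the paper: the same easy direction via Lemma~\ref{lem:tvprojinequal}, the same construction of $\alpha_*$ by disintegrating $\nu$ along $\varphi_{V_*,b_*}$ and reassembling the fiber measures $\nu_y$ against $\mu$, and the same key identity $\alpha_*(A)-\nu(A)=\int \nu_y(A_y)\,d(\mu-\beta_*)(y)$ combined with the Hahn decomposition of $\mu-\beta_*$ to get the sharp bound $d_{\tv}(\mu,\beta_*)$ (the paper phrases this last step as showing $T=\varphi_{V_*,b_*}^{-1}(S)$ is a Hahn set for $\alpha_*-\nu$, but the computation is identical to yours). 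Your parenthetical worry about the naive bound giving $2d_{\tv}$ is exactly right, and the Hahn refinement you indicate---splitting the integral over $S$ and $S^\complement$ and using $0\le h\le 1$---is precisely what the paper does.
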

\begin{proof}
For any $\alpha \in \Em(\mu, n)$,  there exist $V_\alpha \in \O(m,n)$ and $b_\alpha \in \mathbb{R}^m$ with $\varphi_{V_\alpha,b_\alpha}(\alpha )=\mu$. So by Lemma~\ref{lem:tvprojinequal}, $\dd_{\tv}(\alpha, \nu) \ge \dd_{\tv}\bigl(\mu, \varphi_{V_\alpha, b_\alpha}(\nu)\bigr)$ and we get $\Ed_{\tv}(\mu,\nu ) \ge \Pd_{\tv}(\mu, \nu )$ from
\begin{align*}
\inf_{\alpha \in \Em(\mu, n)}\dd_{\tv}(\alpha, \nu ) &\ge \inf_{\alpha \in \Em(\mu, n)}\dd_{\tv}\bigl(\mu, \varphi_{V_\alpha,b_\alpha }(\nu )\bigr)\\
&\ge \inf_{V \in \O(m,n),b\in \mathbb{R}^m}\dd_{\tv}\bigl(\mu, \varphi_{V,b}(\nu )\bigr).
\end{align*}

We next show that $\Pd_{\tv}(\mu, \nu )\ge \Ed_{\tv}(\mu, \nu )$. By the definition of $\Pd_{\tv}(\mu, \nu)$, for any $\varepsilon > 0$, there exists $\beta_*\in \Pm(\nu,m)$ with
 \[\Pd_{\tv}(\mu,\nu ) \le d_{\tv}(\mu, \beta_*) \le \Pd_{\tv}(\mu,\nu )+\varepsilon.\]
Let $V_* \in \O(m,n)$ and $b_* \in \mathbb{R}^m$ be such that $\varphi_{V_*, b_*}(\nu) = \beta_*$ and $S\in \Sigma(\mathbb{R}^m)$ be such that $\dd_{\tv}(\mu, \beta_*) = \mu(S)-\beta_*(S)$. Applying Theorem~\ref{thm:disin} to $\varphi_{V_*, b_*}$, we obtain $\{\nu _y \in \M(\mathbb{R}^n): y\in \mathbb{R}^m\}$ that satisfies
\[
\int_{\mathbb{R}^n} f(x)\, d\nu (x) = \int_{\mathbb{R}^m} \int_{\varphi_{V_*,b_*}^{-1}(y)}f(x)\, d\nu_y(x) \,d \beta_*(y)
\]
for any measurable function $f$. Let $\alpha _*\in \M(\mathbb{R}^n)$ be such that
\[
\alpha_*(S) = \int_{\mathbb{R}^n} \mathbb{I}_{x\in S}\, d\alpha _*(x) = \int_{\mathbb{R}^m} \int_{\varphi_{V_*,b_*}^{-1}(y)}\hspace*{-2ex} \mathbb{I}_{x\in S}\, d\nu_y(x) \,d \mu(y)
\]
for any measurable set $S\subset \mathbb{R}^n$. We can check $\alpha_*$ is indeed a probability measure and $\varphi_{V_*, b_*}(\alpha_*)=\mu$. Partition $\mathbb{R}^n$ into $\varphi_{V_*, b_*}^{-1}(S)$ and $\varphi_{V_*, b_*}^{-1}(S^\complement )$. We claim that for any measurable $B\subseteq \varphi_{V_*, b_*}^{-1}(S)$ and $C\subseteq \varphi_{V_*, b_*}^{-1}(S^\complement )$,
\[
\alpha _*(B)-\nu (B)\ge 0, \quad \alpha _*(C)-\nu (C)\le 0.
\]
Let $g = \mathbb{I}_{x\in B}$. Then
\begin{align*}
\alpha _*(B)-\nu (B) &= \int_{\mathbb{R}^n} g(x)\,d\bigl(\alpha _*(x)-\nu (x)\bigr) \\
&= \int_{\mathbb{R}^m}\int_{\varphi_{V_*, b_*}^{-1}(y)} \hspace*{-2ex} g(x)\, d\nu_y(x) \,d \bigl(\mu(y)-\beta_*(y)\bigr)\\ 
& = \int_{S} \int_{\varphi_{V_*, b_*}^{-1}(y)} g(x) \,d\nu_y(x) \,d \bigl(\mu(y)-\beta_*(y)\bigr)\\
&= \int_{S} h(y)\,d\bigl(\mu(y)-\beta_*(y)\bigr), 
\end{align*}
where $h(y) = \int_{\varphi_{V_*, b_*}^{-1}(y)} g(x) d\nu_y(x) \ge 0. $  By \eqref{eq:hahn1}, we deduce that
$\alpha _*(B)-\nu (B) \ge 0$. Likewise, $\alpha _*(C)-\nu (C)\le 0$.
Let $T = \varphi_{V_*, b_*}^{-1}(S)$. Then for any measurable $A \subseteq \mathbb{R}^n$, 
\begin{align*}
\alpha_*(A)-\nu (A) &= \alpha _*(A\cap T)- \nu (A\cap T)\\
&\qquad\qquad+\alpha _*(A\cap T^\complement )-\nu (A\cap T^\complement )\\
&\le \alpha _*(A\cap T)- \nu (A\cap T)\le \alpha _*(T)- \nu (T).
\end{align*}
Hence we obtain
\begin{align*}
\Ed_{\tv}(\mu, \nu )&\le \dd_{\tv}(\alpha _*,\nu )= \alpha _*(T)-\nu(T) \\
&= \mu(S) - \beta_*(S)= d_{\tv}(\mu, \beta{_*}) \le \Pd_{\tv}(\mu, \nu )+\varepsilon.
\end{align*}
Since $\varepsilon > 0$ is arbitrary, $\Ed_{\tv}(\mu,\nu ) \le \Pd_{\tv}(\mu, \nu )$.
\end{proof}
Theorem~\ref{thm:tv} is stronger than what we may deduce from Theorem~\ref{thm:f} as measures do not required to have densities.

\section{Examples}\label{sec:eg}

Theorems~\ref{thm:W}, \ref{thm:f}, \ref{thm:JS}, and \ref{thm:tv} show that to compute any of the distances therein between probability measures of different dimensions, we  may either compute the projection distance $\Pd$ or the embedding distance $\Ed$. We will present five examples, three continuous and two discrete. In this section, we denote our probability measures by $\rho_1,\rho_2$ instead of $\mu, \nu$ to avoid any clash with the standard notation for mean.

In the following, we will write $\mathcal{N}_n(\mu, \Sigma)$ for the $n$-dimensional normal measure with mean $\mu \in \mathbb{R}^n$ and covariance $\Sigma \in \mathbb{R}^{n \times n}$. For $\rho_1 = \mathcal{N}_n(\mu_1, \Sigma_1)$ and $\rho_2 = \mathcal{N}_n(\mu_2, \Sigma_2) \in \M(\mathbb{R}^n)$,  recall that the $2$-Wasserstein metric and the KL-divergence between them are given by
\begin{align*}
\W_2^2(\rho_1, \rho_2) &= \| \mu_1 -\mu_2\|_2^2 + \tr\bigl(\Sigma_1 +\Sigma_2 - 2\Sigma_2^{\frac12}\Sigma_1 \Sigma_2^{\frac12}\bigr)^{\frac12},\\
\D_{\kl}(\rho_1 \Vert \rho_2) &= \frac{1}{2} \biggl[\tr(\Sigma_{2}^{-1} \Sigma_{1})+ (\mu_{2}-\mu_{1})^\tp \Sigma_{2}^{-1} (\mu_{2}-\mu_{1})\\
&\qquad\qquad -n+\log \biggl(\frac{\det \Sigma_{2}}{\det \Sigma_{1}}\biggr)\biggr]
\end{align*}
respectively. The former may be found in \cite{olkin1982distance} while the latter is a routine calculation.

We adopt the standard convention that a vector in $\mathbb{R}^m$ will always be assumed to be a column vector, i.e., $\mathbb{R}^m \equiv \mathbb{R}^{m \times 1}$. A matrix $X \in \mathbb{R}^{m \times n}$, when denoted $X = [x_1,\dots,x_n]$ implicitly means that $x_1,\dots,x_n \in \mathbb{R}^m$ are its column vectors, and when denoted $X = [y_1^\tp,\dots,y_m^\tp]^\tp$ implicitly means that $y_1,\dots,y_m \in \mathbb{R}^n$ are its row vectors. The notation $\diag(\lambda_1,\dots,\lambda_n)$ means an $n \times n$ diagonal matrix with diagonal entries $\lambda_1,\dots,\lambda_n \in \mathbb{R}$.

\begin{example}[$2$-Wasserstein distance between one- and $n$-dimensional Gaussians]\label{eg:W}
Let $\rho_1 = \mathcal{N}_1(\mu_1,\sigma^2) \in \M(\mathbb{R})$ be a one-dimensional Gaussian measure and $\rho_2 = \mathcal{N}_n(\mu_2,\Sigma) \in \M(\mathbb{R}^n)$ be an $n$-dimensional Gaussian measure, $n \in \mathbb{N}$ arbitrary.  We seek the $2$-Wasserstein distance $\widehat{W}_2(\rho_1, \rho_2 )$  between them. By Theorem~\ref{thm:W}, we have the option of computing either $\PW_2(\rho_1, \rho_2 ) $ or $\EW_2(\rho_1, \rho_2 ) $ but the choice is obvious, given that the former is considerably simpler:
\begin{align*}
\PW_2(\rho_1, \rho_2 )^2 &= \min_{\|x\|_2 = 1,\; y\in \mathbb{R}} \|\mu_1-x^\tp \mu_2-y\|_2^2\\
&\qquad\qquad+\tr(\sigma^2+x^\tp \Sigma x - 2\sigma \sqrt{x^\tp \Sigma x})\\
&=\min_{\|x\|_2 = 1} (\sigma - \sqrt{x^\tp \Sigma x})^2.
\end{align*}
Let $\lambda_1$ and $\lambda_n$ be the largest and smallest eigenvalues of $\Sigma$. Then $\lambda_n \le x^\tp \Sigma x \le \lambda_1$ and thus we must have
\[
\widehat{W}_2(\rho_1, \rho_2 ) = 
\begin{cases}
\sqrt{\lambda_n}-\sigma  &\text{if } \sigma < \sqrt{\lambda_n},\\
0 &\text{if } \sqrt{\lambda_n} \le \sigma  \le \sqrt{\lambda_1},\\
\sigma - \sqrt{\lambda_1} &\text{if } \sigma > \sqrt{\lambda_1}.
\end{cases}
\]
\end{example}

\begin{example}[KL-divergence between one- and $n$-dimensional Gaussians]
Let $\rho_1 = \mathcal{N}_1(\mu_1,\sigma^2)\in \M(\mathbb{R})$ and $\rho_2 = \mathcal{N}_n(\mu_2,\Sigma) \in \M(\mathbb{R}^n)$ be as in Example~\ref{eg:W}. By Theorem~\ref{thm:f}, we may compute either $\PD_{\kl}(\rho_1 \Vert \rho_2 ) $ or $\ED_{\kl}(\rho_1 \Vert \rho_2 )$ and again the simpler option is
\begin{align*}
\PD_{\kl}(\rho_1\Vert \rho_2 ) &= \min_{\|x\|_2=1,y\in \mathbb{R}}\frac{1}{2}\Bigl[\frac{\sigma^2}{x^\tp \Sigma x} + \frac{(\mu_1-x^\tp \mu_2-y)^2}{x^\tp \Sigma x} \\
&\qquad\qquad\qquad\qquad -1 + \log\Bigl(\frac{x^\tp \Sigma x}{\sigma^2}\Bigr)\Bigr] \\
&= \min_{\|x\|_2=1}\frac{1}{2}\Bigl[\frac{\sigma^2}{x^\tp \Sigma x} -1 + \log\Bigl(\frac{x^\tp \Sigma x}{\sigma^2}\Bigr)\Bigr]
\end{align*}
Again $\lambda_n \le x^\tp \Sigma x \le \lambda_1$ where $\lambda_1$ and $\lambda_n$ are the largest and smallest eigenvalues of $\Sigma$.  Since $f(\lambda) = \sigma^2/\lambda+ \log(\lambda/\sigma^2)$ has $f'(\lambda) = (\lambda-\sigma^2)/\lambda^2$, we obtain
\begin{multline*}
\widehat{\D}_{\kl}(\rho_1\Vert \rho_2 ) \\
=
\begin{cases}
\dfrac{1}{2} \Bigl[\dfrac{\sigma^2}{\lambda_n}-1 + \log\Bigl(\dfrac{\lambda_n}{\sigma^2}\Bigr)\Bigr] &\text{if } \sigma < \sqrt{\lambda_n},\\[2ex]
0 &\text{if } \sqrt{\lambda_n} \le \sigma  \le \sqrt{\lambda_1},\\[0.8ex]
\dfrac{1}{2} \Bigl[\dfrac{\sigma^2}{\lambda_1}-1 + \log\Bigl(\dfrac{\lambda_1}{\sigma^2}\Bigr)\Bigr] &\text{if } \sigma > \sqrt{\lambda_1}.
\end{cases}
\end{multline*}
\end{example}

\begin{example}[KL-divergence between uniform measure on $m$-dimensional ball and $n$-dimensional Gaussian]
Let $\mathbb{B}^m  =\{x\in \mathbb{R}^m : \|x\|_2\le 1\}$ be the unit $2$-norm ball in $\mathbb{R}^m$ and let $\rho_1 = \mathcal{U}(\mathbb{B}^m)$ be the uniform probability measure on $\mathbb{B}^m$. Let $\rho_2 = \mathcal{N}_n(\mu_2, \Sigma)$ be an $n$-dimensional Gaussian measure with zero mean and $\Sigma \in \mathbb{R}^{n\times n}$ symmetric positive definite. By Theorem~\ref{thm:f},
\begin{align*}
\widehat{\D}_{\kl}(\rho_1 \Vert \rho_2) &= \PD_{\kl}(\rho_1 \Vert \rho_2) \\
&= \inf_{V\in \O(m,n), \;b\in \mathbb{R}^m}\D_{\kl}\bigl(\rho_1 \Vert \varphi_{V,b}(\rho_2)\bigr). 
\end{align*}
Note that $\varphi_{V,b}(\rho_2) = \mathcal{N}_m(V\mu_2+b, V\Sigma V^\tp )$ is an $m$-dimensional Gaussian. Let $\lambda_1 \ge \dots \ge \lambda_n > 0$ be the eigenvalues of $\Sigma$ and  $\sigma_1 \ge \dots \ge \sigma_m > 0$ be the eigenvalues of $V\Sigma V^\tp $. Then 
\begin{align}
\D_{\kl}&\bigl(\rho_1 \Vert \varphi_{V,b}(\rho_2)\bigr) \nonumber\\
& = \frac12 \biggl[ \sum_{i=1}^m \log(\sigma_i) + \frac{1}{(m+2)}\sum_{i=1}^m\frac{1}{\sigma_i} \biggr] \nonumber\\
&\qquad\qquad +  \log \Gamma\Bigl(\frac{m}{2}+1\Bigr) + \frac{m\log 2}{2} \nonumber \\
&\qquad\qquad\qquad + \min_{b\in \mathbb{R}^m}(V\mu_2+b)^\tp V\Sigma V^\tp (V\mu_2+b) \nonumber \\
&=\frac12 \biggl[ \sum_{i=1}^m \log(\sigma_i) + \frac{1}{(m+2)}\sum_{i=1}^m\frac{1}{\sigma_i} \biggr] \nonumber\\
&\qquad\qquad +  \log \Gamma\Bigl(\frac{m}{2}+1\Bigr) + \frac{m\log 2}{2},  \label{eq:ug}
\end{align}
where the minimum is attained at $b=-V\mu_2$ and $\Gamma$ is the Gamma function.
Let $g(\sigma) \coloneqq \log(\sigma)/2 + 1/[2(m+2)\sigma]$, which has global minimum at $\sigma= 1/(m+2)$. For any $\alpha \ge \beta \ge 0$,
\begin{equation}\label{eq:gm}
g_m(\alpha, \beta) \coloneqq 
\begin{cases}
g(\beta) &\text{if } \beta > \frac{1}{m+2},\\
g\bigl(\frac{1}{m+2}\bigr) &\text{if } \beta \le \frac{1}{m+2} \le \alpha,\\
g(\alpha) &\text{if } \alpha < \frac{1}{m+2}.
\end{cases}
\end{equation}
Thus when $m=1$, we have
\begin{multline*}
\widehat{\D}_{\kl}(\rho_1 \Vert\rho_2) = g_1(\lambda_1, \lambda_n) + \dfrac12\log \dfrac{\pi}{2}\\
= 
\begin{cases}
\dfrac12\log \dfrac{\pi}{2} + \dfrac{1}{6\lambda_n}+ \dfrac{1}{2}\log \lambda_n &\text{if } \lambda_n > \frac13,\\[2ex]
\dfrac12\log \dfrac{\pi}{6} + \dfrac{1}{2} &\text{if } {\lambda_n} \le \frac13  \le {\lambda_1},\\[2ex]
\dfrac12\log \dfrac{\pi}{2} + \dfrac{1}{6\lambda_1}+ \dfrac{1}{2}\log \lambda_1 &\text{if } \lambda_1 < \frac13.
\end{cases}
\end{multline*}
Note that setting $n = 3$ answers the question we posed in the abstract: What is the KL-divergence between the uniform distribution $\rho_1 = \mathcal{U}([-1,1])$ and the Gaussian distribution $\rho_2 = \mathcal{N}_3(\mu_2, \Sigma)$ in $\mathbb{R}^3$.

More generally, suppose $m<n/2$. For any $\sigma_1\ge \dots\ge \sigma_m\ge 0$ with
\[
\lambda_{n-m+i}\le\sigma_i\le \lambda_i \qquad i=1,\dots,m,
\]
we construct $V\in \O(m,n)$ with $V\Sigma V^\tp= \diag(\sigma_1,\dots,\sigma_m)$. Let $\Sigma = Q\Lambda Q^\tp$ be an eigenvalue decomposition  with $Q = [q_1,\dots,q_n] \in \O(n)$.
For each $i=1,\dots,m$, let
\begin{align*}
v_i(\theta_i) &\coloneqq q_i\sin \theta_i + q_{n-m+i}\cos \theta_i \in \mathbb{R}^n, \\
\sigma_i(\theta_i) &\coloneqq \lambda_i\sin^2 \theta_i + \lambda_{n-m+i}\cos^2 \theta_i \in \mathbb{R}_\p.
\end{align*}
Then $V=[v_1(\theta_1)^\tp,\dots,v_m(\theta_m)^\tp]^\tp \in \O(m,n)$ and $V\Sigma V^\tp = \diag\bigl(\sigma_1(\theta_1),\dots,\sigma_m(\theta_m)\bigr) $. Choosing $\theta_i$ so that $\sigma_i(\theta_i) = \sigma_i$, $i =1,\dots,m$, gives us the required result.

With this observation, it follows that when $m < n/2$, the minimum in \eqref{eq:ug} is attained when $\sigma_i = \lambda_i$, $i =1,\dots,m$, and we obtain the closed-form expression
\[
\widehat{\D}_{\kl}(\rho_1 \Vert\rho_2) = \sum_{i=1}^m g_m(\lambda_i, \lambda_{n-m+i})+  \log \Gamma\Bigl(\frac{m}{2}+1\Bigr) + \frac{m\log 2}{2},
\]
where $g_m$ is as defined in \eqref{eq:gm}. 
\end{example}

\begin{example}[$2$-Wasserstein distance between Dirac measure on $\mathbb{R}^m$ and discrete measure on $\mathbb{R}^n$]\label{eg:dirac}
Let $y \in \mathbb{R}^m$ and $\rho_1 \in \M(\mathbb{R}^m)$ be the Dirac measure with $\rho_1(y)=1$,  i.e., all mass centered at $y$. Let $x_1,\dots,x_k \in \mathbb{R}^n$ be distinct points, $p_1,\dots,p_k \ge 0$, $p_1 + \dots + p_k = 0$, and let $\rho_2 \in \M(\mathbb{R}^n)$ be the discrete measure of point masses with $\rho_2 (x_i)=p_i$, $i=1,\dots,k$. We seek the $2$-Wasserstein distance $\widehat{W}_2(\rho_1, \rho_2 )$ and by Theorem~\ref{thm:W}, this is given by $\PW_2(\rho_1, \rho_2 )$. We will show it has a closed-form solution.
Suppose $m \le n$, then
\begin{align*}
\PW_2(\rho_1, \rho_2 )^2 &= \inf_{V\in \O(m,n),\; b\in \mathbb{R}^m}\sum_{i=1}^k p_i\|Vx_i+b+y\|^2_2 \\
&= \inf_{V\in \O(m,n)}\sum_{i=1}^k p_i \biggl\|Vx_i-\sum_{i=1}^kp_iV x_i \biggr\|^2_2\\
&= \inf_{V\in \O(m,n)} \tr(VX V^\tp),
\end{align*}
noting that the second infimum is attained by $b=-y-\sum_{i=1}^kp_iVx_i$ and defining $X$ in the last infimum to be
\[
X \coloneqq \sum_{i=1}^k p_i \biggl(x_i- \sum_{i=1}^k p_ix_i \biggr)\biggl(x_i-\sum_{i=1}^kp_ix_i\biggr)^\tp \in \mathbb{R}^{n\times n}.
\]
Let the eigenvalue decomposition of the symmetric positive semidefinite matrix $X$ be $X=Q\Lambda Q^\tp $ with $\Lambda = \diag(\lambda_1,\dots,\lambda_n)$, $\lambda_1\ge \dots \ge \lambda_n\ge 0$. Then 
\[
\inf_{V\in \O(m,n)}\tr(VXV^\tp )=\sum_{i=0}^{m-1}\lambda_{n-i}
\]
and is attained when $V \in \O(m,n)$ has row vectors given by the last $m$ columns of $Q \in \O(n)$.

\end{example}

\begin{example}[$2$-Wasserstein distance between discrete measures on $\mathbb{R}^m$ and  $\mathbb{R}^n$]
More generally, we may seek the $2$-Wasserstein distance between  discrete probability measures $\rho_1 \in \M(\mathbb{R}^m)$ and $\rho_2 \in \M(\mathbb{R}^n)$. Let $\rho_1$ be supported on $x_1,\dots,x_k \in \mathbb{R}^m$ with values $\rho_1(x_i) = p_i$, $i = 1,\dots, k$; and $y_1,\dots,y_l \in \mathbb{R}^n$ with values $\rho_2(x_i) = q_i$, $i = 1,\dots, l$. The optimization problem for $\PW_2(\rho_1, \rho_2 )$ becomes
\begin{equation}\label{eq:disc}
\inf_{V \in \O(m,n),\;b\in\mathbb{R}^m,\; \pi \in \Gamma(\rho_1 ,\rho_2 )} \sum_{i=1}^k\sum_{j=1}^l \pi_{ij}\|Vx_i+b-y_j\|_2^2,
\end{equation}
where
\begin{align*}
\Gamma(\rho_1 ,\rho_2 ) = \biggl\{ \pi \in \mathbb{R}^{k \times l}_\p : \sum_{j=1}^l \pi_{ij} &= p_i,\; i =1,\dots,k; \\
\sum_{i=1}^k \pi_{ij} &= q_j, \; j = 1,\dots,l \biggr\}.
\end{align*}
While the solution to \eqref{eq:disc} may no longer be determined in closed-form, it is a polynomial optimization problem and can be solved using the Lasserre sum-of-squares technique as a sequence of semidefinite programs \cite{lasserre2015introduction}.
\end{example}

\section{Conclusion}

We proposed a simple, natural framework for taking any $p$-Wasserstein metric or $f$-divergence, and constructing a corresponding distance for probability distributions on $m$- and $n$-dimensional measure spaces where $m \ne n$. The new distances preserve some well-known properties satisfied by the original distances. We saw from several examples that the new distances may be either determined in closed-form or near closed-form, or computed using Stiefel manifold optimization or sums-of-squares polynomial optimization. In future work, we hope to apply our framework to other distances like the Bhattacharyya distance \cite{Bhattacharyya}, the L\'evy--Prokhorov metric \cite{Levy,Prokhorov}, and the {\L}ukaszyk--Karmowski metric \cite{Lukaszyk}.

\subsection*{Acknowledgment} We are deeply grateful to the two anonymous referees for their exceptionally helpful comments and suggestions that vastly improved our article. This work is supported by DARPA HR00112190040, NSF IIS 1546413 and DMS 1854831, and the Eckhardt Faculty Fund. YC would like to express his gratitude to Philippe Rigollet, Jonathan Niles-Weed, and Geoffrey Schiebinger for teaching him about optimal transport and for their hospitality when he visited MIT in 2018. LHL would like to thank Louis H.~Y.~Chen for asking the question on p.~\pageref{question}.

\bibliographystyle{IEEEtran}

\begin{thebibliography}{10}
\providecommand{\url}[1]{#1}
\csname url@samestyle\endcsname
\providecommand{\newblock}{\relax}
\providecommand{\bibinfo}[2]{#2}
\providecommand{\BIBentrySTDinterwordspacing}{\spaceskip=0pt\relax}
\providecommand{\BIBentryALTinterwordstretchfactor}{4}
\providecommand{\BIBentryALTinterwordspacing}{\spaceskip=\fontdimen2\font plus
\BIBentryALTinterwordstretchfactor\fontdimen3\font minus
  \fontdimen4\font\relax}
\providecommand{\BIBforeignlanguage}[2]{{%
\expandafter\ifx\csname l@#1\endcsname\relax
\typeout{** WARNING: IEEEtran.bst: No hyphenation pattern has been}%
\typeout{** loaded for the language `#1'. Using the pattern for}%
\typeout{** the default language instead.}%
\else
\language=\csname l@#1\endcsname
\fi
#2}}
\providecommand{\BIBdecl}{\relax}
\BIBdecl

\bibitem{irpino2008dynamic}
A.~Irpino and R.~Verde, ``Dynamic clustering of interval data using a
  wasserstein-based distance,'' \emph{Pattern Recognit. Lett.}, vol.~29,
  no.~11, pp. 1648--1658, 2008.

\bibitem{sheather2004density}
S.~J. Sheather, ``Density estimation,'' \emph{Statist. Sci.}, vol.~19, no.~4,
  pp. 588--597, 2004.

\bibitem{arjovsky2017wasserstein}
M.~Arjovsky, S.~Chintala, and L.~Bottou, ``Wasserstein {GAN},''
  \emph{arXiv:1701.07875}, 2017.

\bibitem{bao2017cvae}
J.~Bao, D.~Chen, F.~Wen, H.~Li, and G.~Hua, ``{CVAE-GAN}: fine-grained image
  generation through asymmetric training,'' \emph{arXiv:1703.10155}, 2017.

\bibitem{guntuboyina2011lower}
A.~Guntuboyina, ``Lower bounds for the minimax risk using {$f$}-divergences,
  and applications,'' \emph{IEEE Trans. Inform. Theory}, vol.~57, no.~4, pp.
  2386--2399, 2011.

\bibitem{LSY}
L.-H. Lim, R.~Sepulchre, and K.~Ye, ``Geometric distance between positive
  definite matrices of different dimensions,'' \emph{IEEE Trans. Inform.
  Theory}, vol.~65, no.~9, pp. 5401--5405, 2019.

\bibitem{YL}
K.~Ye and L.-H. Lim, ``Schubert varieties and distances between subspaces of
  different dimensions,'' \emph{SIAM J. Matrix Anal. Appl.}, vol.~37, no.~3,
  pp. 1176--1197, 2016.

\bibitem{Memoli}
\BIBentryALTinterwordspacing
F.~M\'{e}moli, ``Gromov-{W}asserstein distances and the metric approach to
  object matching,'' \emph{Found. Comput. Math.}, vol.~11, no.~4, pp. 417--487,
  2011. [Online]. Available: \url{https://doi.org/10.1007/s10208-011-9093-5}
\BIBentrySTDinterwordspacing

\bibitem{Quad}
\BIBentryALTinterwordspacing
E.~M. Loiola, N.~M. Maia~de Abreu, P.~O. Boaventura-Netto, P.~Hahn, and
  T.~Querido, ``A survey for the quadratic assignment problem,'' \emph{European
  J. Oper. Res.}, vol. 176, no.~2, pp. 657--690, 2007. [Online]. Available:
  \url{https://doi.org/10.1016/j.ejor.2005.09.032}
\BIBentrySTDinterwordspacing

\bibitem{SDD}
A.~Salmona, J.~Delon, and A.~Desolneux, ``Gromov--wasserstein distances between
  gaussian distributions,'' \emph{arXiv:2104.07970}, 2021.

\bibitem{Pachl}
J.~K. Pachl, ``Disintegration and compact measures,'' \emph{Math. Scand.},
  vol.~43, no.~1, pp. 157--168, 1978/79.

\bibitem{villani2009optimal}
C.~Villani, \emph{{Optimal Transport: Old and New}}, ser. Grundlehren der
  Mathematischen Wissenschaften.\hskip 1em plus 0.5em minus 0.4em\relax
  Springer-Verlag, Berlin, 2009, vol. 338.

\bibitem{Frechet}
M.~Fr\'{e}chet, ``Sur la distance de deux lois de probabilit\'{e},'' \emph{C.
  R. Acad. Sci. Paris}, vol. 244, pp. 689--692, 1957.

\bibitem{Kantorovich}
L.~V. Kantorovich, ``On a problem of {M}onge,'' \emph{Zap. Nauchn. Sem.
  S.-Peterburg. Otdel. Mat. Inst. Steklov. (POMI)}, vol. 312, no.~11, pp.
  15--16, 2004.

\bibitem{Levy}
P.~L\'evy, \emph{{Th\'eorie de l'Addition des Variables Al\'eatoires}}, ser.
  Monographies des Probabilit\'es, publi\'es sous la direction de E.
  Borel.\hskip 1em plus 0.5em minus 0.4em\relax Gauthier--Villars, Paris, 1937,
  vol.~1.

\bibitem{Wasserstein}
L.~N. Vasershtein, ``Markov processes over denumerable products of spaces
  describing large system of automata,'' \emph{Problems Inform. Transmission},
  vol.~5, no.~3, pp. 47--52, 1969.

\bibitem{rubner2000earth}
Y.~Rubner, C.~Tomasi, and L.~J. Guibas, ``The earth mover's distance as a
  metric for image retrieval,'' \emph{J. Comput. Vis.}, vol.~40, no.~2, pp.
  99--121, 2000.

\bibitem{solomon2015convolutional}
J.~Solomon, F.~De~Goes, G.~Peyr{\'e}, M.~Cuturi, A.~Butscher, A.~Nguyen, T.~Du,
  and L.~Guibas, ``Convolutional {Wasserstein} distances: Efficient optimal
  transportation on geometric domains,'' \emph{ACM Trans. Graph.}, vol.~34,
  no.~4, p.~66, 2015.

\bibitem{sandler2011nonnegative}
R.~Sandler and M.~Lindenbaum, ``Nonnegative matrix factorization with earth
  mover's distance metric for image analysis,'' \emph{IEEE Trans. Pattern Anal.
  Mach. Intell.}, vol.~33, no.~8, pp. 1590--1602, 2011.

\bibitem{Delon2004Midway}
J.~Delon, ``Midway image equalization,'' \emph{J. Math. Imaging Vision},
  vol.~21, no.~2, pp. 119--134, 2004.

\bibitem{gutierrez2017optimal}
J.~Gutierrez, J.~Rabin, B.~Galerne, and T.~Hurtut, ``Optimal patch assignment
  for statistically constrained texture synthesis,'' in \emph{Scale Space and
  Variational Methods in Computer Vision}.\hskip 1em plus 0.5em minus
  0.4em\relax Springer, 2017, pp. 172--183.

\bibitem{Wang2013Optimal}
W.~Wang, D.~Slep\v{c}ev, S.~Basu, J.~A. Ozolek, and G.~K. Rohde, ``A linear
  optimal transportation framework for quantifying and visualizing variations
  in sets of images,'' \emph{Int. J. Comput. Vis.}, vol. 101, no.~2, pp.
  254--269, 2013.

\bibitem{Zhu2007Image}
L.~Zhu, Y.~Yang, S.~Haker, and A.~Tannenbaum, ``An image morphing technique
  based on optimal mass preserving mapping,'' \emph{IEEE Trans. Image
  Process.}, vol.~16, no.~6, pp. 1481--1495, 2007.

\bibitem{courty2016optimal}
N.~Courty, R.~Flamary, D.~Tuia, and T.~Corpetti, ``Optimal transport for data
  fusion in remote sensing,'' in \emph{IEEE Int. Geosci. Remote Sensing
  Symp.}\hskip 1em plus 0.5em minus 0.4em\relax IEEE, 2016, pp. 3571--3574.

\bibitem{wang2010optimal}
W.~Wang, J.~A. Ozolek, D.~Slep{\v{c}}ev, A.~B. Lee, C.~Chen, and G.~K. Rohde,
  ``An optimal transportation approach for nuclear structure-based pathology,''
  \emph{IEEE Trans. Med. Imaging}, pp. 621--631, 2010.

\bibitem{makihara2010earth}
Y.~Makihara and Y.~Yagi, ``Earth mover’s morphing: Topology-free shape
  morphing using cluster-based emd flows,'' in \emph{Computer Vision}.\hskip
  1em plus 0.5em minus 0.4em\relax Springer, 2010, pp. 202--215.

\bibitem{Mathon2014Optimal}
B.~Mathon, F.~Cayre, P.~Bas, and B.~Macq, ``Optimal transport for secure
  spread-spectrum watermarking of still images,'' \emph{IEEE Trans. Image
  Process.}, vol.~23, no.~4, pp. 1694--1705, 2014.

\bibitem{Galichon2016OT}
A.~Galichon, \emph{{Optimal Transport Methods in Economics}}.\hskip 1em plus
  0.5em minus 0.4em\relax Princeton University Press, Princeton, NJ, 2016.

\bibitem{frisch2002reconstruction}
U.~Frisch, S.~Matarrese, R.~Mohayaee, and A.~Sobolevski, ``A reconstruction of
  the initial conditions of the universe by optimal mass transportation,''
  \emph{Nature}, vol. 417, pp. 260--262, 2002.

\bibitem{flamary2016optimal}
R.~Flamary, C.~F{\'e}votte, N.~Courty, and V.~Emiya, ``Optimal spectral
  transportation with application to music transcription,'' in \emph{Proc. 30th
  Int. Conf. Adv. Neural Inform. Process. Sys.}, 2016, pp. 703--711.

\bibitem{zhang2016building}
M.~Zhang, Y.~Liu, H.~Luan, M.~Sun, T.~Izuha, and J.~Hao, ``Building earth
  mover's distance on bilingual word embeddings for machine translation,'' in
  \emph{Proc. 30th AAAI Conf. Artif. Intell.}\hskip 1em plus 0.5em minus
  0.4em\relax AAAI Press, 2016, pp. 2870--2876.

\bibitem{kusner2015word}
M.~Kusner, Y.~Sun, N.~Kolkin, and K.~Weinberger, ``From word embeddings to
  document distances,'' in \emph{Proc. 32nd Int. Conf. Mach. Learn.}, 2015, pp.
  957--966.

\bibitem{KL1}
S.~Kullback and R.~A. Leibler, ``On information and sufficiency,'' \emph{Ann.
  Math. Statistics}, vol.~22, pp. 79--86, 1951.

\bibitem{KL2}
S.~Kullback, \emph{{Information Theory and Statistics}}.\hskip 1em plus 0.5em
  minus 0.4em\relax Dover Publications, Inc., Mineola, NY, 1997.

\bibitem{Shannon1948Entropy}
C.~E. Shannon, ``A mathematical theory of communication,'' \emph{Bell System
  Tech. J.}, vol.~27, pp. 379--423, 623--656, 1948.

\bibitem{von2009finding}
P.~Von~B{\"u}nau, F.~C. Meinecke, F.~C. Kir{\'a}ly, and K.-R. M{\"u}ller,
  ``Finding stationary subspaces in multivariate time series,'' \emph{Phys.
  Rev. Lett.}, vol. 103, no.~21, p. 214101, 2009.

\bibitem{Chaloner1995bayes}
K.~Chaloner and I.~Verdinelli, ``Bayesian experimental design: a review,''
  \emph{Statist. Sci.}, vol.~10, no.~3, pp. 273--304, 1995.

\bibitem{Renyi1961divergence}
A.~R\'{e}nyi, ``On measures of entropy and information,'' in \emph{Proc. 4th
  {B}erkeley {S}ympos. {M}ath. {S}tatist. and {P}rob.}, vol.~1.\hskip 1em plus
  0.5em minus 0.4em\relax University of California Press, Berkeley, CA, 1961,
  pp. 547--561.

\bibitem{Csi}
I.~Csisz\'{a}r, ``Eine informationstheoretische {U}ngleichung und ihre
  {A}nwendung auf den {B}eweis der {E}rgodizit\"{a}t von {M}arkoffschen
  {K}etten,'' \emph{Magyar Tud. Akad. Mat. Kutat\'{o} Int. K\"{o}zl.}, vol.~8,
  pp. 85--108, 1963.

\bibitem{pearson1900x}
K.~Pearson, ``On the criterion that a given system of deviations from the
  probable in the case of a correlated system of variables is such that it can
  be reasonably supposed to have arisen from random sampling,'' \emph{London
  Edinburgh Philos. Mag. J. Sci.}, vol.~50, no. 302, pp. 157--175, 1900.

\bibitem{Hellinger1909distance}
E.~Hellinger, ``Neue {B}egr\"{u}ndung der {T}heorie quadratischer {F}ormen von
  unendlichvielen {V}er\"{a}nderlichen,'' \emph{J. Reine Angew. Math.}, vol.
  136, pp. 210--271, 1909.

\bibitem{Chernoff1952div}
H.~Chernoff, ``A measure of asymptotic efficiency for tests of a hypothesis
  based on the sum of observations,'' \emph{Ann. Math. Statistics}, vol.~23,
  pp. 493--507, 1952.

\bibitem{Jeffreys1961div}
H.~Jeffreys, \emph{{Theory of Probability}}, ser. Third edition.\hskip 1em plus
  0.5em minus 0.4em\relax Clarendon Press, Oxford, 1961.

\bibitem{eguchi1985differential}
S.~Eguchi \emph{et~al.}, ``A differential geometric approach to statistical
  inference on the basis of contrast functionals,'' \emph{Hiroshima Math. J.},
  vol.~15, no.~2, pp. 341--391, 1985.

\bibitem{JS}
J.~Lin, ``Divergence measures based on the {S}hannon entropy,'' \emph{IEEE
  Trans. Inform. Theory}, vol.~37, no.~1, pp. 145--151, 1991.

\bibitem{nielsen2010family}
F.~Nielsen, ``A family of statistical symmetric divergences based on {Jensen}'s
  inequality,'' \emph{arXiv:1009.4004}, 2010.

\bibitem{Exponential}
O.~Calin and C.~Udri\c{s}te, \emph{{Geometric Modeling in Probability and
  Statistics}}.\hskip 1em plus 0.5em minus 0.4em\relax Springer, Cham, 2014.

\bibitem{Rao1995Hellinger}
C.~R. Rao, ``A review of canonical coordinates and an alternative to
  correspondence analysis using {H}ellinger distance,''
  \emph{Q\"{u}estii\'{o}}, vol.~19, no. 1-3, pp. 23--63, 1995.

\bibitem{nishii2006image}
R.~Nishii and S.~Eguchi, ``Image classification based on markov random field
  models with {Jeffreys} divergence,'' \emph{Stoch. Process. Appl.}, vol.~97,
  no.~9, pp. 1997--2008, 2006.

\bibitem{hero2001alpha}
A.~O. Hero, B.~Ma, O.~Michel, and J.~Gorman, ``Alpha-divergence for
  classification, indexing and retrieval,'' Communication and Signal Processing
  Laboratory, University of Michigan, Technical Report CSPL-328, Tech. Rep.,
  2001.

\bibitem{hastings2010measuring}
M.~B. Hastings, I.~Gonz{\'a}lez, A.~B. Kallin, and R.~G. Melko, ``Measuring
  {Renyi} entanglement entropy in quantum {Monte Carlo} simulations,''
  \emph{Phys. Rev. Lett.}, vol. 104, no.~15, p. 157201, 2010.

\bibitem{Amari1987Diff}
S.-I. Amari, O.~E. Barndorff-Nielsen, R.~E. Kass, S.~L. Lauritzen, and C.~R.
  Rao, \emph{{Differential Geometry in Statistical Inference}}, ser. Institute
  of Mathematical Statistics Lecture Notes.\hskip 1em plus 0.5em minus
  0.4em\relax Institute of Mathematical Statistics, Hayward, CA, 1987, vol.~10.

\bibitem{csiszar2011information}
I.~Csiszar and J.~K{\"o}rner, \emph{{Information Theory: Coding Theorems for
  Discrete Memoryless Systems}}.\hskip 1em plus 0.5em minus 0.4em\relax
  Cambridge University Press, Cambridge, 2011.

\bibitem{itzkovitz2010overlapping}
S.~Itzkovitz, E.~Hodis, and E.~Segal, ``Overlapping codes within protein-coding
  sequences,'' \emph{Genome Res.}, vol.~20, no.~11, pp. 1582--1589, 2010.

\bibitem{sims2009alignment}
G.~E. Sims, S.-R. Jun, G.~A. Wu, and S.-H. Kim, ``Alignment-free genome
  comparison with feature frequency profiles ({FFP}) and optimal resolutions,''
  \emph{Proc. Natl. Acad. Sci. U.S.A.}, vol. 106, no.~8, pp. 2677--2682, 2009.

\bibitem{ofran2003analysing}
Y.~Ofran and B.~Rost, ``Analysing six types of protein--protein interfaces,''
  \emph{J. Mol. Biol.}, vol. 325, no.~2, pp. 377--387, 2003.

\bibitem{dedeo2013bootstrap}
S.~DeDeo, R.~X. Hawkins, S.~Klingenstein, and T.~Hitchcock, ``Bootstrap methods
  for the empirical study of decision-making and information flows in social
  systems,'' \emph{Entropy}, vol.~15, no.~6, pp. 2246--2276, 2013.

\bibitem{klingenstein2014civilizing}
S.~Klingenstein, T.~Hitchcock, and S.~DeDeo, ``The civilizing process in
  {London}’s {Old Bailey},'' \emph{Proc. Natl. Acad. Sci. U.S.A.}, vol. 111,
  no.~26, pp. 9419--9424, 2014.

\bibitem{mitroi2020parametric}
F.-C. Mitroi-Symeonidis, I.~Anghel, and N.~Minculete, ``Parametric
  {Jensen}--{Shannon} statistical complexity and its applications on full-scale
  compartment fire data,'' \emph{Symmetry}, vol.~12, no.~1, p.~22, 2020.

\bibitem{GAN}
I.~Goodfellow, J.~Pouget-Abadie, M.~Mirza, B.~Xu, D.~Warde-Farley, S.~Ozair,
  A.~Courville, and Y.~Bengio, ``Generative adversarial nets,'' in \emph{Proc.
  Int. Conf. Adv. Neural Inform. Process. Sys.}\hskip 1em plus 0.5em minus
  0.4em\relax Curran Associates, Inc., 2014, pp. 2672--2680.

\bibitem{chen2014total}
T.~Chen and S.~Kiefer, ``On the total variation distance of labelled {Markov}
  chains,'' in \emph{Proc.\ Joint Meeting 23rd EACSL Ann. Conf. Comp. Sci.
  Logic and the 29th Ann. ACM/IEEE Symp. Logic Comp. Sci.}, 2014, pp. 1--10.

\bibitem{ding2010total}
J.~Ding, E.~Lubetzky, and Y.~Peres, ``Total variation cutoff in birth-and-death
  chains,'' \emph{Probab. Theory Relat. Fields}, vol. 146, no. 1-2, p.~61,
  2010.

\bibitem{nourdin2013convergence}
I.~Nourdin and G.~Poly, ``Convergence in total variation on {Wiener} chaos,''
  \emph{Stoch. Process. Appl.}, vol. 123, no.~2, pp. 651--674, 2013.

\bibitem{brooks1997approach}
S.~P. Brooks, P.~Dellaportas, and G.~O. Roberts, ``An approach to diagnosing
  total variation convergence of {MCMC} algorithms,'' \emph{J. Comput. Graph.
  Statist.}, vol.~6, no.~3, pp. 251--265, 1997.

\bibitem{pekoz2013total}
E.~A. Pek{\"o}z, A.~R{\"o}llin, N.~Ross \emph{et~al.}, ``Total variation error
  bounds for geometric approximation,'' \emph{Bernoulli}, vol.~19, no.~2, pp.
  610--632, 2013.

\bibitem{osher2005iterative}
S.~Osher, M.~Burger, D.~Goldfarb, J.~Xu, and W.~Yin, ``An iterative
  regularization method for total variation-based image restoration,''
  \emph{Multiscale Model. Simul.}, vol.~4, no.~2, pp. 460--489, 2005.

\bibitem{olkin1982distance}
I.~Olkin and F.~Pukelsheim, ``The distance between two random vectors with
  given dispersion matrices,'' \emph{Linear Algebra Appl.}, vol.~48, pp.
  257--263, 1982.

\bibitem{lasserre2015introduction}
J.~B. Lasserre, \emph{{An Introduction to Polynomial and Semi-algebraic
  Optimization}}, ser. Cambridge Texts in Applied Mathematics.\hskip 1em plus
  0.5em minus 0.4em\relax Cambridge University Press, Cambridge, 2015.

\bibitem{Bhattacharyya}
A.~Bhattacharyya, ``On a measure of divergence between two statistical
  populations defined by their probability distributions,'' \emph{Bull.
  Calcutta Math. Soc.}, vol.~35, pp. 99--109, 1943.

\bibitem{Prokhorov}
Y.~V. Prokhorov, ``Convergence of random processes and limit theorems in
  probability theory,'' \emph{Teor. Veroyatnost. i Primenen.}, vol.~1, pp.
  177--238, 1956.

\bibitem{Lukaszyk}
S.~{\L}ukaszyk, ``A new concept of probability metric and its applications in
  approximation of scattered data sets,'' \emph{Comput. Mech.}, vol.~33, no.~4,
  pp. 299--304, 2004.

\end{thebibliography}

\end{document}